\def\be{\begin{eqnarray}}
\def\ee{\end{eqnarray}}
\def\ben{\begin{eqnarray*}}
\def\een{\end{eqnarray*}}
\def\me{\medskip\noindent}
\def\ben#1{\color{green}#1 \color{black}}
\newcommand{\Co}{\mathcal{C}}
\def\D{\mathbb{D}}
\def\N{\mathbb{N}}
\def\P{\mathbb{P}}
\def\R{\mathbb{R}}
\def\E{\mathbb{E}}
\def\s{\sigma}
\def\ind{{\mathchoice {\rm 1\mskip-4mu l} {\rm 1\mskip-4mu l}
{\rm 1\mskip-4.5mu l} {\rm 1\mskip-5mu l}}}
\newtheorem{thm}{Theorem}[section]
\newtheorem{lem}[thm]{Lemma}
\newtheorem{cor}[thm]{Corollary}
\newtheorem{prop}[thm]{Proposition}
\renewenvironment{proof}{\noindent {\bf Proof \phantom{9}}}
{\hfill $\square$ \vspace{0.25cm}}
\title{Dynamics of lineages in adaptation to a gradual environmental change}
\author{Vincent Calvez\thanks{ICJ, UMR 5208 CNRS \& Universit\'e Claude Bernard Lyon 1, Lyon France; E-mail: \texttt{vincent.calvez@math.cnrs.fr}},  Beno\^{i}t Henry\thanks{IMT Lille Douai, Institut Mines-T\'el\'ecom, Univ.\ Lille, F-59000 Lille, France; E-mail: \texttt{benoit.henry@imt-lille-douai.fr}}, Sylvie
    M\'el\'eard\thanks{Ecole Polytechnique, CNRS, UMR 7641 -  CMAP, route de
    Saclay, 91128 Palaiseau Cedex-France; E-mail: \texttt{sylvie.meleard@polytechnique.edu}}, Viet Chi Tran\thanks{LAMA, Univ Gustave Eiffel, Univ Paris Est Creteil, CNRS, F-77454 Marne-la-Vall\'ee, France; E-mail:
    \texttt{chi.tran@univ-eiffel.fr}}}
\date{\today}
\begin{document}

\maketitle

\begin{abstract}
We investigate a simple quantitative genetics model subjet to a gradual environmental change from the viewpoint of the phylogenies of the living individuals. We aim to understand better how the past traits of their ancestors are shaped by the adaptation to the varying environment. The individuals are characterized by a one-dimensional trait. The dynamics -births and deaths- depend on a time-changing mortality rate that shifts the optimal trait to the right at constant speed. The population size is regulated by a nonlinear non-local logistic competition term. The macroscopic behaviour can be described by a PDE that admits a unique positive stationary solution. In the stationary regime, the population can persist, but  with a  lag in the trait distribution due  to the environmental change. For the microscopic (individual-based) stochastic process, the evolution of the lineages can be traced back using the historical process, that is, a measure-valued process on the set of continuous real functions of time. Assuming stationarity of the trait distribution, we describe the limiting distribution, in large populations, of the path of an individual drawn at random at a given time $T$. Freezing the non-linearity due to competition allows the use of a many-to-one identity together with
Feynman-Kac's formula. This path, in reversed time, remains close to a simple Ornstein-Uhlenbeck process. It shows how the lagged bulk of the present population stems from ancestors once optimal in trait but still in the tail of the trait distribution in which they lived.
\end{abstract}

\me Keywords: stochastic individual-based model, spine of birth-death process, many-to-one formula, ancestral path, historical process, genealogy, phylogeny, resilience.

\bigskip
\noindent \emph{MSC 2000 subject classification:} 92D25, 92D15, 60J80, 60K35, 60F99.
\bigskip

\noindent \textbf{Acknowledgements:} The authors thank Pierre-Louis Lions for his proof of the uniqueness of the stationary distribution. This work has been supported by the Chair ``Mod\'elisation Math\'ematique et Biodiversit\'e'' of Veolia Environnement-Ecole Polytechnique-Museum National d'Histoire Naturelle-Fondation X. V.C.T. also acknowledges support from Labex B\'ezout (ANR-10-LABX-58).
This project has received funding from the European Research Council (ERC) under the European Union’s Horizon 2020 research and innovation programme (grant agreement No 865711).

\section{Introduction}

An increasing number of studies have demonstrated rapid phenotypic changes in invasive or natural populations that are subject to environmental changes, such as climate change or habitat
alteration due for instance to human activities \cite{BraHol06,Par06,Hen08,HofSgr11,burrows_pace_2011,anderson_evolutionary_2012,sheldon_climate_2019}. Such fast evolution may result in adaptation for these populations facing changing environment, as observed in evolutionary experiment~\cite{Gon13,GorAarZwa16,Col18,guzella_slower_2018}.
Important theoretical progress has been made to predict phenotypic evolution in a changing environment since the pioneer works of~\cite{LynGabWoo91,LynLan93,BurLyn95,LanSha96}, see \cite{KopMat13} for a review.\\
A major prediction of these models is that when the optimal phenotype changes linearly with time, the phenotypical distribution of individuals is moving at the same speed to keep pace of the change, but is lagged behind the optimum. The equilibrium value of the lag depends on the rate of the change, on the genetic variance and on the strength of selection. Above a critical rate of change of the optimal phenotype with time, this lag becomes so large that the fitness of the population falls below the value that allows its persistence and the population is doomed to extinction. In the case of small enough lag such that the population persists under constant adaptation, we study, in the present paper, the genealogies of its individuals. Our aim is to understand how individual dynamics build the macroscopic adaptation of the population \textit{via} the quantitative description of the typical ancestral lineage.\\

We consider a population dynamics where individuals are characterized by a trait $x\in \mathbb{R}$ and that give birth and die in continuous time. During their life the individual trait variations are modelled  by a diffusion operator with variance $\sigma^2$. The environment   in which the population lives is shifted at constant speed $\sigma c>0$ that drives the adaptation of the population. The constant $c>0$ can be interpreted as the speed of environmental change.\\
{Two complementary descriptions of the dynamics are opted for: (i) a macroscopic, deterministic, description of the phenotypic density, and (ii) a microscopic, stochastic, description of the individual phenotypes. We believe that the later is better suited for the analysis of the lineages.}\\

The macroscopic dynamics of the phenotypic density in the moving environment is described by  the following partial differential equation (PDE) for the density with respect to the one-dimensional trait:
\begin{equation}\label{pde0}
\partial_{t}u(t,x)=\frac{\sigma^2}{2}\partial^2_{xx} u(t,x)+\left(1-\frac{1}{2}(x-\sigma ct)^2-\int_{\mathbb{R}}u(t,y)dy  \right)u(t,x).
\end{equation}
Here, $u(t,x)$ denotes the density of population at time $t$ and trait $x\in \R$. Due to the environmental change, the optimal trait with regards to the growth rate $1-(x-\sigma ct)^2/2$ is $x^*_t=\sigma c t$. The choice of the scaling of the speed is discussed after Theorem \ref{reversed} below. The nonlinear term involving the total mass of the population accounts for the mean field competition between individuals at time $t$.

It is  well known (cf. \cite{champagnat_microscopic_2006,champagnatferrieremeleard_stochmodels,fourniermeleard}) that  equation \eqref{pde0} can be derived from a stochastic system describing the random individual dynamics. More precisely, we consider the following  branching-diffusion process with interaction. An individual, at trait $x\in \R$ at time $t\geq 0$, gives birth to a new individual at the same  trait with rate 1.  Each individual dies with rate $(x-\sigma ct)^2/2+N_t/K$,  where $N_t$ is the total population size at time $t$ and $K$ is the carrying capacity of the system. The natural death rate $(x-\sigma ct)^2$ reflects the gradual environmental change, as in the PDE. The term $N_t/K$ in the death rate corresponds to density-dependent competition. Changes in the trait during the lives of individuals are driven by independent Brownian motions, accounting for infinitesimal changes of the phenotypes. It is standard to rigorously prove that the empirical measure on the individual traits  weighted by $1/K$ satisfies a semi-martingale decomposition, which is a stochastic equation analogous to \eqref{pde0} (and given later), and that it converges weakly to the solution of the PDE when $K$ tends to infinity (provided the initial conditions are scaled suitably).\\

\smallskip Due to the environmental change, the behavior of the population is naturally observed  in the moving frame. In what follows, we will always work in this setting, defining the density in the moving frame as
 $f(t,x)=u(t,x+\sigma c t)$. As such, we obtain an additional transport term in the PDE (associated with a drifted Brownian motion in the individual-based model):

 \begin{equation}\label{pde}
\partial_{t}f(t,x)=\frac{\sigma^2}{2}\partial^2_{xx} f(t,x)+ \sigma c \ \partial_{x}f(t,x) +  \left(1-\frac{x^2}{2}-\int_{\mathbb{R}}f(t,y)dy  \right)f(t,x).
\end{equation}

The unique positive stationary state of this equation can easily be computed. It can exist if, and only if, $1-\sigma/2-c^2/2>0$, which is the persistence condition on the speed $c$. Under this condition, the stationary state is a  weighted Gaussian density centered on $-c$, with variance $\sigma$, hereafter denoted by $F$ (see Section \ref{sec:F}). The shift by $c$ relative to the fitness optimum at $x=0$ can be interpreted as a  lag in the process of adaptating to a moving environment. Indeed individuals try keeping pace of the gradual change, so that they can never be optimal in average. This maladaptation can be measured by the shift $c$ which is associated with a load in the fitness of value $c^2/2$.

Additionally, this model predicts that the population collapses when the speed of environmental change is above a certain threshold $c^* =  (2 - \sigma)^{1/2}$. Here, we consider that the speed $c$ is below $c^*$, as already mentioned above.

Our purpose here is to provide more insight on this phenomenon by studying the trait ancestry of the individuals at a given time $T$, \textit{i.e.} the sequence of traits of their ancestors in the past.
We assume that, in the moving frame, the population dynamics are nearly stationary, starting from the equilibrium $F$.  In particular, the solution $f(t,x)$ of the deterministic PDE \eqref{pde} remains  constant in time, equal to this equilibrium. Consequently, the stochastic process will stay close to this equilibrium on finite time intervals in the regime of large population.
In the stationary regime the dynamics of the PDE is trivial but the dynamics of the lineages are not, as can be seen on numerical simulations of the individual-based models (cf.\ Fig. \ref{fig:1}, both in the original variables, and in the moving frame).

\begin{figure}[!ht]
\begin{center}
\begin{tabular}{cc}
\includegraphics[width=.45\linewidth]{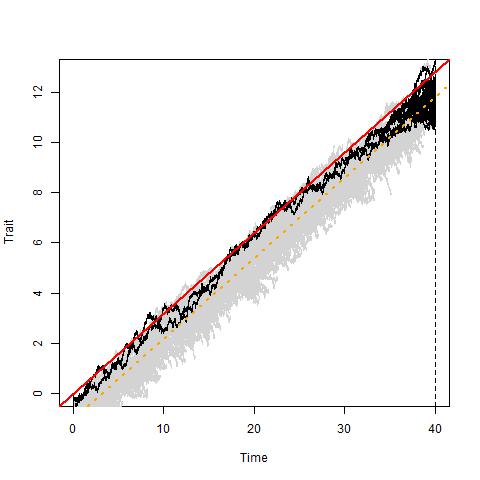} &
\includegraphics[width=.45\linewidth]{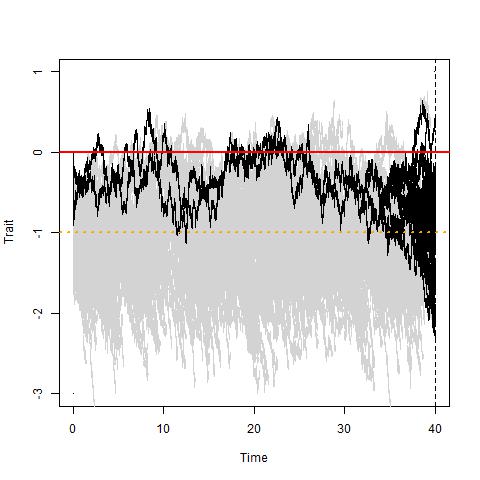} \\
(a) & (b)
\end{tabular}
\caption{{\small Ancestral lineages of the present population. To an individual of trait $x$ living at time $T$, its lineage corresponds to the function that associates with each time $t<T$ the trait of this individual if it was already born, or else the trait of its closest ancestor at that time (its parent if the latter was born, otherwise its grand-parent etc.). The traits in the population (ordinate) are shown with respect to time (abscissa). The extinct lineages are in gray, whereas the lineages of the living particles are in black. As can be seen, the trait distribution is nearly stationary (gray background on the right image), whereas the lineages follow an Ornstein-Uhlenbeck process (see our main result, Theorem \ref{reversed}). (a): fixed frame. (b): moving frame. The parameters are $c=1$, $\sigma=0.32$, $K=250$.}}\label{fig:1}
\end{center}
\end{figure}

 We  observe the following pattern: a stabilized cloud of points representing the stationary state and solid lines representing the lineages, highlighting the response to environment. One observes that the individuals alive at the final observation time are all coming from past individuals whose traits were far from being representative in the past distribution but who were better fitted.

 \smallskip More precisely, we will describe the approximate dynamical lineage of a fixed individual sampled uniformly in a large population at a time $T>0$. We will show the following Theorem \ref{reversed} stating that in backward time, these trajectories are asymptotically (when {the carrying capacity} $K$ tends to infinity), Ornstein-Uhlenbeck processes.
\begin{thm}\label{reversed}
In the moving framework, assuming the trait distribution of the population stationary, the backward in time process describing the lineage of an individual sampled in the living population at time $T>0$ converges, when $K\rightarrow +\infty$, to the following time homogeneous Ornstein-Uhlenbeck process driving the ancestral trajectories around $0$, according to the equation
\begin{equation}\label{OU-intro}
d\widehat{Y}_{s}=-\sigma \widehat{Y}_{s}ds +\sigma d W_{s},
\end{equation}
for a Brownian motion $W$.
\end{thm}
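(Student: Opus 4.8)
The plan is to reduce the interacting historical process to a single tilted \emph{spine} trajectory through a many-to-one identity, and then to time-reverse that trajectory. First I would \textbf{freeze the nonlinearity}. Starting the population from the equilibrium $F$, in the large-population regime the empirical measure stays close to the (stationary) solution of \eqref{pde}, so the competition load should satisfy $\sup_{t\le T}|N_t/K-\rho|\to 0$ in probability, where $\rho:=\int_{\R}F(y)\,dy=1-\sigma/2-c^2/2$ is the stationary mass. Replacing $N_t/K$ by the constant $\rho$ turns the branching diffusion into a \emph{linear} (non-interacting) branching diffusion. Its ancestral lineage is carried by the moving-frame trait motion $d\xi_t=-\sigma c\,dt+\sigma\,dB_t$, with net growth rate $W(y):=1-y^2/2-\rho$ (birth rate $1$ minus the frozen death rate $y^2/2+\rho$).

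Second, I would apply the \textbf{many-to-one formula together with Feynman--Kac} to this linear model. For a path functional $\Phi$, the expected value of $\tfrac1K\sum_{i\in V_T}\Phi$ over the living population equals $\E_{F}\!\big[\Phi(\xi_{[0,T]})\exp(\int_0^T W(\xi_s)\,ds)\big]$ up to the normalization $N_T/K\to\rho$. Hence the law of the ancestral path of a uniformly sampled individual at time $T$ is the Feynman--Kac path measure $\mathbb{Q}_T$, absolutely continuous with respect to the law of $\xi$ with density proportional to $\exp(\int_0^T W(\xi_s)\,ds)$. This $\mathbb{Q}_T$ is a time-inhomogeneous diffusion with diffusion coefficient $\sigma$, whose one-dimensional marginal density factorises as $q_t(x)\propto u_t(x)\,v_{T-t}(x)$, with $u_t$ the forward Feynman--Kac density solving $\partial_t u_t=\big(\tfrac{\sigma^2}{2}\partial_{xx}+\sigma c\,\partial_x+W\big)u_t$ and $v_r(x)=\E_x[\exp(\int_0^r W(\xi_u)\,du)]$.

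Third, I would \textbf{time-reverse}. The crucial simplification is that $F$ is the principal eigenfunction (eigenvalue $0$) of the forward operator, i.e. $\big(\tfrac{\sigma^2}{2}\partial_{xx}+\sigma c\,\partial_x+W\big)F=0$ (this is exactly the stationarity identity defining $F$), so starting at $u_0=F$ gives $u_t\equiv F$ for all $t$. Writing $\widehat{Y}_s=\xi_{T-s}$ and applying the Haussmann--Pardoux time-reversal formula to $\mathbb{Q}_T$, the reversed drift is
\[
\widehat{b}_s(x)=-b_{T-s}(x)+\sigma^2\partial_x\log q_{T-s}(x)=\sigma c+\sigma^2\partial_x\log u_{T-s}(x),
\]
where the terms involving $v$ cancel. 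With $u_{T-s}=F\propto\exp\!\big(-(x+c)^2/(2\sigma)\big)$ one gets $\partial_x\log F=-(x+c)/\sigma$, hence $\widehat{b}_s(x)=\sigma c-\sigma(x+c)=-\sigma x$, time-homogeneous. The diffusion coefficient is unchanged, so $d\widehat{Y}_s=-\sigma\widehat{Y}_s\,ds+\sigma\,dW_s$, with $\widehat{Y}_0\sim F$, which is precisely \eqref{OU-intro}; the backward trajectory thus relaxes from the lagged bulk near $-c$ toward $0$, matching the biological picture.

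The routine parts are the explicit eigenfunction computation and the Gaussian regularity needed to legitimately apply time reversal (the marginals $q_t$ are smooth Gaussians, so this is not an issue). \textbf{The main obstacle is the freezing step}: one must show that the genuinely interacting lineage law converges to the frozen one as $K\to\infty$. This requires a quantitative control that $N_t/K$ remains uniformly close to $\rho$ on $[0,T]$ (from convergence of the empirical process to the stationary PDE, with care for finite-$K$ fluctuations and the small-probability event of population collapse), and then a transfer of this uniform closeness through the exponential tilting weight $\exp(\int_0^T(1-\xi_s^2/2-N_s/K)\,ds)$ so that the error in the path measure does not accumulate over $[0,T]$. Establishing tightness of the backward paths and identifying the limit as the unique solution of \eqref{OU-intro} then completes the argument.
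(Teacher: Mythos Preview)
Your proposal is correct and follows the same overall architecture as the paper (freeze the nonlinearity via a coupling of historical processes, many-to-one/Feynman--Kac representation of the spine, Haussmann--Pardoux reversal), but your third step takes a genuinely shorter route. The paper computes $m_t(x)=v_t(x)$ explicitly via Girsanov tricks (Proposition~\ref{prop:mtx}), derives the forward spine drift explicitly (Proposition~\ref{prop:generatorG-explicite}), solves the resulting SDE in closed form (Proposition~\ref{prop:Ytilde-resolu}), computes the Gaussian marginal law of the spine at each time, and only then applies the reversal formula. You instead exploit two structural facts: (i) the spine marginal factorises as $q_t\propto u_t\,v_{T-t}$, and (ii) the $v$-contribution in the reversed drift $-b^{\mathbb{Q}}_{T-s}+\sigma^2\partial_x\log q_{T-s}$ cancels exactly (since the forward spine drift is $-\sigma c+\sigma^2\partial_x\log v_{T-t}$). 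Combined with the observation that $u_t\equiv F$ by stationarity of $F$ for the linear equation \eqref{moving-lin}, this yields $\widehat b_s(x)=\sigma c+\sigma^2\partial_x\log F(x)=-\sigma x$ directly, without ever needing the explicit formula for $m_t$. Your argument is more conceptual and makes transparent why the reversed dynamics is time-homogeneous and independent of $T$; the paper's explicit computations, on the other hand, buy additional information (the exact forward spine SDE, the biased initial law $Y_0\sim m_T F/\lambda$, and the two-regime interpretation of the forward generator near and far from $T$) that your shortcut bypasses. For the regularity needed by Haussmann--Pardoux, note that you do not actually need $q_t$ to be Gaussian: smoothness and positivity of $F$ and of $m_{T-t}$ (Corollary~\ref{avril}) suffice.
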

This result is made more precise in Theorem \ref{thm:conclusion}. A similar conclusion was derived for a similar model, independently of this work, by another approach in \cite{ForienGarnierPatout}. The latter analysis remains on a macroscopic level and follows the tracking of neutral fractions in the PDE, as initiated in \cite{Roquesetal}. \\

It is an immediate observation that the Ornstein-Uhlenbeck process is independent of the speed $c$. This is indeed due to our choice of scaling the speed of change $c$ by the standard deviation $\sigma$ in \eqref{pde0} and \eqref{pde}. This is to say that the speed of change is measured relatively to how many units of standard mutational deviation are shifted per time unit. With this scaling, the lag load $c^2/2$ is independent of the mutational variance rate $\sigma^2$. In particular it does not vanish as the mutational variance goes to zero. \\

Although we cannot handle the long time asymptotics with our methodology,  we can still notice that the stationary distribution of the backward Ornstein-Uhlenbeck process is another Gaussian distribution centered at the origin, with variance $\sigma/2$. Hence, individuals sampled at time $T$ come from ancestors that were close to being optimal in the past, but not representative in the distribution at that time (see also  Fig. \ref{fig:1}).\\
Notice that in the extreme case of a vanishing variance $\sigma^2\to 0$, a simple long time scaling $s' = \sigma s$ in the SDE \eqref{OU-intro} makes it close to the deterministic ODE $d\widehat{Y}_{s'}=- \widehat{Y}_{s'}ds'$. The solution of the latter equation converges in long time to 0.\\
{This study shows how important it is in ecology or agriculture to preserve the trait diversity, as the subpopulation with the majority trait may not be the one ensuring the survival of the species in case of environmental shift. In cancer therapy or for understanding antibiotic resistances, our results show that the eradication of such majority trait with gradual effects of drugs or antibiotics may not be enough to fight against the persistence of tumors or bacterial strains. We also refer to \cite{guzella_slower_2018} for similar consideration in experimental evolution.\\

 The proof of Theorem \ref{reversed} is now sketched. Our approach mixes here two points of view based on the stochastic individual-based model: on the one hand, the spinal approach as developed for branching diffusion in \cite{bansaye_limit_2011,hardyharris3,harrisroberts2011,Marguet1,Marguet2} and on the other hand, the historical processes, as introduced in Dawson and Perkins \cite{dawsonperkinsAMS,perkinsAMS,perkins} and Dynkin \cite{dynkin91}, and then developed in M\'el\'eard and Tran \cite{meleardtran_suphist} (with a correction, see \cite{kliem,chihdr}). \\
 The historical process taken at a time $t$ describes the history of each individual in the population stopped at time $t$. Since  the individual death rate depends on the total size of the population, the historical process cannot be reduced to an accumulation of independent trajectories. Nevertheless, assuming that the initial condition converges to the stationary solution of \eqref{pde}, an important step in our approach is to replace (up to a negligible error that we can control) the nonlinearity  in the stochastic population process (the total number of individuals) by the mass of the stationary distribution. The birth-and-death process with diffusion becomes a branching-diffusion process and computation becomes rather easier. By coupling techniques, we can therefore capture the dynamics of the historical process using  reasoning proper to branching-diffusion processes and we can easily prove in this context formulas  based on the so-called many-to-one formulas describing the distribution of the ancestry (in forward time) of a typical individual in the population living at time $T$, as it is done in a general context in \cite{Marguet2} with a more complicate proof (since more general). Furthermore, the coupling also allows to justify the use of the well known spinal theory to obtain the law of an individual chosen uniformly at random at time $T$.

 The process that we obtain involves the expectation of the number $m(t,x)$ of individuals at time $t$ issued from one individual with trait $x$. This quantity is obtained as expectation of an additive functional of a drifted Brownian motion and can be explicitly computed by tricky arguments based on Girsanov transform and inspired by \cite{fitzsimmons1993markovian}. Note that this computation allows to obtain the explicit value of the solution of
\begin{align}
& \partial_{t} m{(t,x)}=\frac{\sigma^2}{2}\partial^2_{xx}m{(t,x)}-\sigma c\partial_{x}m{(t,x)}+\left (1-\frac{x^2}{2}-\|F\|_1\right )m{(t,x)} \label{eq:m-intro}\\
& m_{0}(x)=1.\nonumber
\end{align}
The many-to-one formula allows to characterize the forward lineage dynamics as obtained from an auxiliary non-homogeneous Markov process. In this specific case, we obtain the exact trajectory of the trait lineage and prove that they are Gaussian at any time. The last step consists in  using the results by Haussmann and Pardoux \cite{haussmannpardoux} on time reversed diffusion processes.
 We prove that the time reversed paths are Ornstein-Uhlenbeck processes attracted by $0$ as stated by Theorem \ref{reversed}. This proves how the genealogical tree is strongly unbalanced in our case, as observed in Fig. \ref{fig:1}.\\

 For alternative points of view, let us mention that there has been a large literature related to our work. First, there has been a considerable amount of studies dealing with simple models of waves advancing a fitness landscape in an asexual reproducing population, starting from the seminal papers  \cite{tsimring_rna_1996,kessler_evolution_1997}, see also \cite{rouzine_solitary_2003} for a similar model with a nonlinear diffusion operator. In these models, the trait is the fitness itself ({\em i.e.} the growth rate per capita) centered by its average on the population, so that new mutants can outcompete the resident population if their fitness is higher than the mean.  We also refer to analogous studies in the absence of deleterious mutations, by \cite{desai_speed_2007} (including experimental evidence supporting the theory), \cite{beerenwinkel_genetic_2007} in the context of oncogenesis, and \cite{park_speed_2010} for a review article. Mathematical results in this direction were obtained in \cite{durrett_traveling_2011}, then \cite{schweinsberg_rigorous_2017}. Several authors also investigated the structure of the genealogies in stochastic models, exhibiting coalescent structures.
 For coalescent processes in modelling genealogies for populations without competition or interaction non-linearities, we refer to \cite{berestycki_recent_nodate} for a review. For directed selection, when the population at latter stages is issued from individuals at the tip of the wave, strongly asymmetric genealogical trees arise (see \cite{brunet_effect_2007, brunet_genealogies_2013, desai_genetic_2013, neher_genealogies_2013,schweinsberg_rigorous_2017-1,berestyckiberestyckischweinsberg}).
 In \cite{lepersbilliardportemeleardtran}, the genealogies in an adaptive dynamics time scale are described with a forward-backward coalescent. For structured populations with competition, other approaches include the look-down processes \cite{donnellykurtz_96,donnellykurtz_99,etheridgekurtz} or the tree-valued descriptions as in \cite{blancasbenitezguflerkliemtranwakolbinger,grevenpfaffelhuberwinter2009,kliemwinter}. Let us emphasize that, here, we focus on typical lineages rather than coalescent analysis. This is left for a future work.\\
{Finally, let us cite other mathematical contributions with spatial displacement and competition local in space (contrary to \eqref{pde0} where it is global in trait) \cite{loarie_velocity_2009, potapov_climate_2004, berestycki_can_2009, alfaro_effect_2017}. However, these studies focus on the ability of the species to keep pace of the climate change, {\em i.e.} the conditions of persistence for the species, rather than on lineages dynamics.\\

 In Section \ref{sec:PDE}, we introduce and study the individual-based stochastic measure-valued process underlying the PDE \eqref{pde}. The stationary solution of this PDE, which will play a central role in what follows, is also carefully detailed. The stochastic processes associated with \eqref{pde} are non-linear because of the competition term. However, when we are close to the equilibrium, a coupling with a linear birth-death process (with a time-varying growth rate) is possible. This coupling holds for the trait distribution at a given time $T$ but also for the historical picture, i.e. for the ancestral paths of the individuals alive at $T$. This is explained in Section \ref{sec:Feynman-Kac}. For the linear birth-death process, we can apply a Feynman-Kac formula. This, together with fine stochastic calculus techniques, allows us to compute the exact solution of \eqref{eq:m-intro}. In Section \ref{sec:spinal}, we use a many-to-one formula together with the expression of $m_t(x)$ obtained previously and the coupling of historical processes to obtain the approximating stochastic differential equation (SDE) satisfied by the ancestral path of an individual chosen at random in the population at a given time $T$. This SDE is non-homogeneous in time but its time-reverse SDE is a simple time-homogeneous Ornstein-Uhlenbeck process.

\section{The partial differential equation and the population process in the moving framework}\label{sec:PDE}

\subsection{The underlying measure-valued stochastic process}\label{sec:measure-valued-sto-pro}

As explained in the introduction, we are interested in the dynamics of the population  density in the moving framework. We have seen that it is given by  \eqref{pde}.
This equation is well posed. Existence of a weak solution will be obtained from the study of the underlying stochastic process and uniqueness by use of the associate mild equation.

\me Let us introduce the stochastic process  associated with Equation \eqref{pde}. On a probability space $(\Omega,\mathcal{F},\mathbb{P})$, we consider a random process $(Z^{K}_{t})_{t\in \mathbb{R}_{+}}$ with values in the set of  point measures  on $\R$, and defined by
\begin{equation}Z^{K}_{t} = {1\over K} \sum_{i\in V^K_t} \delta_{X^i(t)},\label{def:ZK}
\end{equation}
where $V^K_t$ is the set of labels of individuals alive at time $t$ and where $\,X^i(t)$ denotes the position of the $i$-th individual at time $t$. Individual labels can be chosen in the Ulam-Harris-Neveu set $ \mathcal{I} = \cup_{n\in \N}\mathbb{N}^n$ (e.g. see \cite{legall}) where offspring labels are obtained by concatenating the label of their parent with their ranks among their siblings. Note that the size $N^K_t$ of  the population at time $t$   satisfies $N^K_t= |V^K_t|=K \langle Z^{K}_t,1\rangle$, where the brackets are the notation for the integral of the constant function equal to 1 with respect to the measure $Z^K_t(dx)$. More generally, for a finite measure $\mu$ and a positive measurable function $\varphi$, $\langle \mu, \varphi\rangle=\int_{\R}\varphi(x)\mu(dx)$ denotes the integral of $\varphi$ with respect to $\mu$.\\

In the sequel, we will denote by $\mathcal{M}_{F}(\R)$ the set of finite measures on $\R$ equipped with the topology of weak convergence. The process $Z^K$ belongs to $\D(\R_+,\mathcal{M}_{F}(\R))$, the space of left-limited and right-continuous processes with values in $\mathcal{M}_{F}(\R)$, that we equip with the Skorokhod topology (see e.g. \cite{billingsley_convergence_1999}).

When times vary, the process $(Z^{K}_{t})_{t\in \mathbb{R}_{+}}$ defines a Markov process whose transitions are as follows. For an individual at position $x$ in the population of $N$ individuals,
its birth rate is $1$ and its death rate is $x^2/2 + (N-1)/K$. Between the jumps, the positions $X^i(t)$ behave as a drifted Brownian motions $\sigma\,B_{t}- c\,\sigma\,t$ started at their positions after the jump. All individual births and deaths events and the diffusions between jumps are independent but  the  interaction between individuals to survive is modeled at the individual level by the additional death rate $(N-1)/K$.\\

Following  Champagnat-M\'el\'eard \cite{champagnatmeleard}, we can construct the process $Z^K$ as the unique solution of a stochastic differential equation driven by a Poisson point measure and Brownian motions indexed by $\mathcal{I}$.  (see Appendix \ref{app:sde}). From this representation, and using stochastic calculus for diffusions with jumps (e.g. \cite{ikedawatanabe}), we can derive the following moment estimates, proved in Appendix \ref{app:moment}:
\begin{lem}\label{lem:propagation-moment} We assume that the initial condition $Z^{K}_{0}$ satisfies for $\epsilon>0$ that:
\begin{equation}\label{hyp:moment}
\sup_{K\in \N^*}\E\big(\langle Z^K_0,1\rangle^{2+\epsilon} \big)<+\infty\qquad \mbox{ and }\qquad \sup_{K\in \N^*} \E\big( \langle Z^K_0,x^4\rangle^{1+\epsilon}\big)<+\infty.
\end{equation}
Then,  for any $T>0$, we have
\begin{equation}\sup_{K\in \N^*} \E\big(\sup_{t\in [0,T]} \langle Z^K_t,1\rangle^{2+\epsilon}\big)<+\infty\qquad \mbox{ and }\qquad \sup_{K\in \N^*} \E\big(\sup_{t\in [0,T]} \langle Z^K_t,x^2\rangle^{1+\epsilon/2}\big)<+\infty.\label{eq:propagation}\end{equation}
\end{lem}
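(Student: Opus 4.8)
The plan is to start from the stochastic differential representation of $Z^K$ provided in Appendix~\ref{app:sde}, apply the semimartingale (Itô) formula for jump--diffusions to well-chosen functionals of $Z^K_t$, and close the resulting differential inequalities by Gronwall's lemma. To make the martingale manipulations rigorous, all computations are first carried out up to the stopping times $\tau_n=\inf\{t\ge 0:\ \langle Z^K_t,1\rangle\ge n \text{ or } \langle Z^K_t,x^2\rangle\ge n\}$, so that every local martingale appearing is a genuine integrable martingale; the bounds produced are uniform in $n$ and $K$, and one lets $n\to+\infty$ at the very end by Fatou's lemma. The supremum inside the expectation is recovered through the Burkholder--Davis--Gundy (BDG) inequality applied to the martingale parts.

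For the total mass, the structural observation is that deaths only remove individuals while the per-capita birth rate is constant equal to $1$. One can therefore couple $(N^K_t)_t$ pathwise with a pure-birth (Yule) process $(\widetilde N_t)_t$ started from $N^K_0$, in which every individual branches at rate $1$ and nobody ever dies, in such a way that $N^K_t\le\widetilde N_t$ for all $t$. Since $\widetilde N$ is nondecreasing, $\sup_{t\le T}\langle Z^K_t,1\rangle\le\widetilde N_T/K$, and $\widetilde N_T$ is a sum of $N^K_0$ independent geometric variables of mean $e^{T}$; Minkowski's inequality in $L^{2+\epsilon}$ gives $\E(\widetilde N_T^{\,2+\epsilon}\mid N^K_0)\le C_T\,(N^K_0)^{2+\epsilon}$, whence
\[
\E\Big(\sup_{t\le T}\langle Z^K_t,1\rangle^{2+\epsilon}\Big)\le C_T\,\E\big(\langle Z^K_0,1\rangle^{2+\epsilon}\big),
\]
which is bounded uniformly in $K$ by the first hypothesis in \eqref{hyp:moment}. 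This settles the first estimate and provides the mass moments to be fed into the second one.

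For the $x^2$-moment I would apply the jump--diffusion Itô formula to $\Phi(\langle Z^K_t,x^2\rangle)$ with $\Phi(u)=(1+u)^{1+\epsilon/2}$, the shift by $1$ removing the singularity of $u\mapsto u^{1+\epsilon/2}$ at the origin and making $\Phi'$ bounded by $C(1+u)^{\epsilon/2}$ and $\Phi''$ bounded. Since $dX^i=\sigma\,dB^i-c\sigma\,dt$, the diffusive part contributes the drift $\Phi'\big(\sigma^2\langle Z^K,1\rangle-2c\sigma\langle Z^K,x\rangle\big)$ plus an $O(1/K)$ Itô correction, while births and deaths contribute, to first order, $\Phi'\big(\langle Z^K,x^2\rangle-\tfrac12\langle Z^K,x^4\rangle-\tfrac{N^K-1}{K}\langle Z^K,x^2\rangle\big)$. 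The term $-\tfrac12\Phi'\langle Z^K,x^4\rangle$ is strongly negative and is the engine of the estimate: the growth term satisfies $\Phi'\langle Z^K,x^2\rangle\le C\,\Phi$, and the terms $\sigma^2\Phi'\langle Z^K,1\rangle$ and $-2c\sigma\Phi'\langle Z^K,x\rangle$ are handled by Cauchy--Schwarz, $|\langle Z^K,x\rangle|\le\langle Z^K,1\rangle^{1/2}\langle Z^K,x^2\rangle^{1/2}$, followed by Young's inequality, which splits them into a multiple of $\Phi$ (absorbed by Gronwall) and a power of $\langle Z^K,1\rangle$ controlled by the mass estimate of the previous step.

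The main obstacle lies in the higher moments that the power $1+\epsilon/2$ and the supremum generate: the second-order jump terms produce a contribution involving $\tfrac1K\langle Z^K,x^6\rangle$, and the BDG bracket of the jump martingale involves $\langle Z^K,x^4\rangle$ and again $\tfrac1K\langle Z^K,x^6\rangle$, none of which is a priori controlled. The key device is the discreteness bound
\[
\tfrac1K\langle Z^K,x^6\rangle=\tfrac1{K^2}\sum_{i\in V^K_t} X^i(t)^6\le\Big(\tfrac1K\max_i X^i(t)^2\Big)\langle Z^K,x^4\rangle\le\langle Z^K,x^2\rangle\,\langle Z^K,x^4\rangle,
\]
valid because a single particle's weight is at most the total mass; it reduces the sixth moment to a product in which $\langle Z^K,x^4\rangle$ reappears. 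One then checks that, after this reduction, all the $\langle Z^K,x^4\rangle$ contributions — the second-order drift, and, via a BDG estimate followed by Young's inequality, the martingale bracket as well — are dominated by the negative first-order term $-\tfrac12\Phi'\langle Z^K,x^4\rangle$ (this uses $1+\epsilon/2<3$, together with the competition drift $-\tfrac{N^K-1}{K}\langle Z^K,x^2\rangle\Phi'$ for extra room); balancing these constants, possibly by first bounding $\sup_t\E\Phi$ and then $\E\sup_t\Phi$, is the delicate technical point. What survives is bounded by $C\Phi$ plus powers of the mass, yielding
\[
\E\Big(\sup_{s\le t\wedge\tau_n}\Phi(\langle Z^K_s,x^2\rangle)\Big)\le C_T+C_T\int_0^t\E\Big(\sup_{r\le s\wedge\tau_n}\Phi(\langle Z^K_r,x^2\rangle)\Big)\,ds,
\]
where $C_T$ depends on the initial moments \eqref{hyp:moment} (the initial value being finite by Cauchy--Schwarz). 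Gronwall's lemma and $n\to+\infty$ then give the second bound in \eqref{eq:propagation}, uniformly in $K$; the crux throughout is the simultaneous balancing of the negative mortality drift against both the discreteness corrections and the BDG bracket, which is precisely where the logistic/mortality structure of the model is used.
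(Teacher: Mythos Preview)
Your mass estimate via the Yule coupling is fine and is morally the same as the paper's: the paper drops all the death terms and Gronwalls directly, which amounts to the same comparison. The real divergence is in the second estimate.

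You retain the mortality drift $-\tfrac12\Phi'\langle Z^K,x^4\rangle$, call it ``the engine of the estimate'', and then spend the rest of the argument (the $x^6$ discreteness inequality, absorption of second-order jump corrections, the BDG--Young balancing you flag as delicate) making sure this engine is not overwhelmed by the higher moments it itself generates. The paper takes the opposite route: it \emph{discards every death term from the outset}, before compensating. This is legitimate because each death jump only removes a non-negative summand from $\langle Z^K,x^2\rangle$, so the raw death jumps in $\Phi(\langle Z^K,x^2\rangle)$ are non-positive and can be dropped for an upper bound. What remains is the birth Poisson integral plus the diffusion, whose compensator is just $\Phi'\langle Z^K,x^2\rangle$ plus mass terms, closed directly by Gronwall with no $x^4$ in the drift. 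Crucially, the surviving martingale then consists only of \emph{birth} jumps and the diffusion, so its bracket involves at worst $\tfrac{1}{K}\langle Z^K,x^4\rangle$ and never $x^6$. The paper then does two passes: first $\sup_t\E\langle Z^K_t,x^2\rangle^{1+\epsilon'}$ and $\sup_t\E\langle Z^K_t,x^4\rangle^{1+\epsilon'}$ by the same drop-and-Gronwall trick (no sup inside, martingale has mean zero), then $\E\sup_t$ by Doob, the bracket being controlled by the first pass.

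Your route can be pushed through --- the discreteness bound $\tfrac{1}{K}\langle Z^K,x^6\rangle\le\langle Z^K,x^2\rangle\langle Z^K,x^4\rangle$ is correct, and the BDG term can be handled by factoring out $\sup_s\Phi^{1/2}$, absorbing half of $\E\sup\Phi$, and controlling the residual $\tfrac{1}{K}\E\int\langle x^4\rangle$ via the first-pass bound on $\E\int\Phi'\langle x^4\rangle$ (essentially the ``first $\sup_t\E$, then $\E\sup_t$'' fallback you mention). But by that stage you have rebuilt the paper's two-pass structure while still carrying the death terms you never needed. What your approach would buy is robustness in models with unbounded birth rates, where the negative drift is genuinely indispensable; here the birth rate is the constant $1$, so the paper's strip-and-Gronwall is both available and far cleaner.
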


It is also standard to write the semi-martingale decomposition of the process $(\langle Z^K_t,\varphi\rangle)_{t\in \R_+}$ for a function $\varphi\in \Co^2_b(\R)$, under the assumption \eqref{hyp:moment}:
\begin{multline}
\label{process-moving}
\langle Z^{K}_{t},\varphi\rangle=\langle Z^{K}_0,\varphi\rangle\\
+\int_{0}^{t} \int_{\mathbb{R}}  \left\{ \left(1-\frac{1}{2}x^2-\langle Z^{K}_{s}, 1\rangle\right)\varphi(x)-\sigma c \varphi'(x)+\frac{\sigma^2}{2}\varphi''(x)\right\} Z^{K}_{s}(dx)\, ds+M^{K,\varphi}_{t}, \end{multline}
where the process $\,M^{K,\varphi}\,$ is a square integrable martingale with predictable quadratic variation process given by
\begin{equation}
\label{qv}
\langle M^{K,\varphi} \rangle_{t} = {1\over K} \int_{0}^t \int_{\R} \Big\{  \Big(1+{x^2\over 2} + \langle Z^{K}_{s}, 1\rangle\Big)\varphi^2(x) + {\sigma^2} (\varphi')^2(x) \Big)\Big\} Z^{K}_{s}(dx) ds.
\end{equation}

In the next section we will need a mild version of this equation. To do that, we introduce the semigroup $(P_{t})_{t\in\mathbb{R}_{+}}$ of the process $\sigma B_{t} - c \sigma t$ and we define, for a fixed $t>0$ and for $\varphi\in C^2_{b}(\R)$,
\begin{equation}\psi(s,x) = P_{t-s}\varphi(x).\label{eq:semi-gp}\end{equation}
Using the trajectorial representation of $Z^{K}_{t}$ (cf.\ Appendix \ref{app:sde})  and integrating these functions, we show in Appendix \ref{app:stoch-mild} that:
\begin{equation}
\label{mild-proc}
\langle Z^{K}_{t},\varphi\rangle=\langle Z^{K}_0,P_{t}\varphi\rangle+\int_{0}^{t} \int_{\mathbb{R}}  \left(1-\frac{x^2}{2}-\langle Z^{K}_{s}, 1\rangle\right)P_{t-s}\varphi(x)\  Z^{K}_{s}(dx)\, ds+\mathcal{M}^{K,\varphi}_{t}, \end{equation}
where $\mathcal{M}^{K,\varphi}_{t}$ is a square integrable martingale computed explicitly in Appendix \ref{app:stoch-mild}.

\begin{thm}\label{thm-CV}
Let us assume that the initial condition $(Z^K_0(dx))_{K}$ satisfies \eqref{hyp:moment} and that $(Z^{K}_0(dx))_{K}$ converges in probability (weakly as measures) to the deterministic finite measure $\,\xi_0(dx)$. Let $T>0$ be given.
The sequence of processes $\,(Z^{K}_{t})_{ t\in [0, T]}$  converges in probability and in $\mathbb{L}^2$, in $\D([0,T],\mathcal{M}_{F}(\R))$ to a deterministic continuous function $\,( \xi_{t}, t\le T)$ of $C([0,T],\mathcal{M}_{F}(\R))$,   satisfying  for each $t>0$ that $\langle \xi_{t}, 1+x^2\rangle <+\infty$ which is the unique solution of the weak equation: $\forall \varphi\in C^2_{b}(\R)$,
\begin{equation}
\label{limit-moving}\langle \xi_{t},\varphi\rangle=\langle \xi_{0},\varphi\rangle+\int_{0}^{t} \int_{\mathbb{R}}  \left\{ \left(1-\frac{1}{2}x^2-\langle \xi_{s}, 1\rangle\right)\varphi(x)-\sigma c \varphi'(x)+\frac{\sigma^2}{2}\varphi''(x)\right\} \xi_{s}(dx)\, ds.\end{equation}
More precisely:\begin{equation}
 \label{CV-L2}\lim_{K\to \infty}\E(\sup_{t\le T} |\langle Z^{K}_{t},\varphi\rangle - \langle \xi_{t},\varphi\rangle|^2) = 0.
 \end{equation}
Moreover, for any $t>0$, the measure $\xi_{t} $ is absolutely continuous with respect to Lebesgue measure and its density $f(t,x)$ is solution of \eqref{pde} issued from $\xi_{0}$.
\end{thm}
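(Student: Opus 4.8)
The plan is to prove this by the now-classical route for interacting measure-valued branching diffusions (following \cite{fourniermeleard,champagnatmeleard}): first establish tightness of the sequence $(Z^K)_K$ in $\D([0,T],\mathcal{M}_F(\R))$, then identify every limit point as a solution of the weak equation \eqref{limit-moving}, then prove that \eqref{limit-moving} has a unique solution, and finally upgrade the resulting convergence in law towards the deterministic limit into the quantitative $\mathbb{L}^2$ estimate \eqref{CV-L2} by working directly with the mild formulation \eqref{mild-proc}. The deterministic nature of the limit is what makes the otherwise awkward quadratic nonlinearity tractable.

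For tightness I would rely entirely on the moment bounds of Lemma \ref{lem:propagation-moment}. The estimate $\sup_K\E[\sup_{t\le T}\langle Z^K_t,x^2\rangle^{1+\eps/2}]<\infty$ yields the compact containment condition, since the sublevel sets $\{x^2\le R\}$ are compact and the mass carried outside a large compact set is uniformly small; the bound on $\langle Z^K_t,1\rangle^{2+\eps}$ controls the total mass. Tightness of the real-valued processes $(\langle Z^K_t,\varphi\rangle)_t$, for $\varphi$ ranging over a countable convergence-determining subset of $\Co^2_b(\R)$, then follows from the Aldous--Rebolledo criterion applied to the semimartingale decomposition \eqref{process-moving}: the finite-variation part has a modulus of continuity controlled by the propagated moments, while the martingale part has predictable bracket \eqref{qv} of order $1/K$ and is therefore negligible. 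Compact containment together with tightness of each coordinate gives tightness in $\D([0,T],\mathcal{M}_F(\R))$, and the vanishing of the limiting brackets forces any limit point to be continuous.

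To identify the limit I would pass to the limit in \eqref{process-moving} along a converging subsequence: the martingale term disappears in $\mathbb{L}^2$ because of the $O(1/K)$ bracket, and the only delicate term is the product $\langle Z^K_s,1\rangle\,\langle Z^K_s,\varphi\rangle$, for which I would use that each factor converges towards a \emph{deterministic} limit, so that the product converges, the uniform integrability being granted by the $(2+\eps)$-moment bound. Every limit point thus solves \eqref{limit-moving}. Uniqueness is obtained from the associated mild equation $\xi_t=P_t\xi_0+\int_0^t P_{t-s}\big[(1-x^2/2-\langle\xi_s,1\rangle)\xi_s\big]\,ds$: comparing two solutions tested against functions with $\|\varphi\|_\infty\le 1$, bounding the $x^2/2$ contribution by the propagated second moment, and invoking Gronwall's lemma forces them to coincide. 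Uniqueness pins down the single limit point, which gives convergence in probability of $(Z^K)$ to the deterministic $\xi$.

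For the explicit bound \eqref{CV-L2} I would subtract the mild equation \eqref{mild-proc} from its deterministic counterpart and estimate $\E[\sup_{t\le T}|\langle Z^K_t,\varphi\rangle-\langle\xi_t,\varphi\rangle|^2]$ jointly with the total-mass difference. The martingale contribution is controlled in $\mathbb{L}^2$ via Doob's inequality and the $O(1/K)$ bracket; the drift difference splits as $(\langle Z^K_s,1\rangle-\langle\xi_s,1\rangle)\langle Z^K_s,P_{t-s}\varphi\rangle+\langle\xi_s,1\rangle\,\langle Z^K_s-\xi_s,P_{t-s}\varphi\rangle$, both pieces being bounded using $\|P_{t-s}\varphi\|_\infty\le\|\varphi\|_\infty$, the moment estimates \eqref{eq:propagation}, and a Gronwall argument, the $x^2/2$ term again absorbed by the propagated second moment. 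The absolute continuity of $\xi_t$ and the fact that its density solves \eqref{pde} then follow from the Duhamel representation above: convolution of $\xi_0$ and of the source term against the Gaussian transition kernel of $\sigma B-c\sigma t$ produces a density, which one verifies solves \eqref{pde} first weakly and then in mild form. The main obstacle throughout is the interplay between the quadratic competition nonlinearity and the unbounded trait-dependent coefficient $x^2/2$: one must carry the time-propagated second-moment control of $Z^K$ through the tightness, identification and Gronwall steps, which is precisely what Lemma \ref{lem:propagation-moment} is designed to supply.
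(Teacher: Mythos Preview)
Your overall architecture (tightness via Lemma \ref{lem:propagation-moment} and Aldous--Rebolledo, identification of limit points through \eqref{process-moving}, then uniqueness, then upgrading to $\mathbb{L}^2$) matches the paper. The essential gap is in the uniqueness step, and it propagates into your proposed derivation of \eqref{CV-L2}.

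You write the mild equation with the bare drifted heat semigroup $P_t$ and then propose to control the $x^2/2$ contribution ``by the propagated second moment''. That does not close a Gronwall loop: for two solutions $\xi^1,\xi^2$ the relevant term is $\big|\big\langle \xi^1_s-\xi^2_s,\tfrac{x^2}{2}P_{t-s}\varphi\big\rangle\big|$, which is a \emph{signed} measure tested against an unbounded function. Bounding it requires a weighted variation norm of the difference, say with weight $1+x^2$; but then, when you Gronwall in that norm, the $x^2$ coefficient multiplied by test functions of growth $1+x^2$ produces growth $x^4$, and the argument spirals. Second moments of each solution separately do not help here. The paper's remedy is precisely to replace $P_{t-s}$ by the Feynman--Kac semigroup
\[
\psi^t(s,x)=\mathbb{E}_x\!\left[\varphi(Y_{t-s})\exp\!\Big(-\int_0^{t-s}\tfrac{Y_u^2}{2}\,du\Big)\right],
\]
for which $\partial_s\psi^t-\sigma c\,\partial_x\psi^t+\tfrac{\sigma^2}{2}\partial^2_{xx}\psi^t-\tfrac{x^2}{2}\psi^t=0$, so the unbounded coefficient disappears and one gets
\[
\langle\xi_t,\varphi\rangle=\langle\xi_0,\psi^t(0,\cdot)\rangle+\int_0^t\big(1-\langle\xi_s,1\rangle\big)\langle\xi_s,\psi^t(s,\cdot)\rangle\,ds,
\]
with $\|\psi^t(s,\cdot)\|_\infty\le\|\varphi\|_\infty$. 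Now Gronwall on the total-variation norm works directly. This is the key trick you are missing.

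Two smaller points. First, in your identification step you invoke that ``each factor converges towards a deterministic limit'' to handle $\langle Z^K_s,1\rangle\langle Z^K_s,\varphi\rangle$; but at that stage the limit is not yet known to be deterministic (that comes only after uniqueness). The correct argument is simply that the map $\mu_\cdot\mapsto\int_0^t\langle\mu_s,1\rangle\langle\mu_s,\varphi\rangle\,ds$ is continuous and the moment bounds supply uniform integrability. Second, the paper does not prove \eqref{CV-L2} by a quantitative Gronwall as you propose (which would again hit the $x^2/2$ obstruction): it proceeds softly, noting that convergence in law to a deterministic continuous limit gives convergence in probability uniformly on $[0,T]$, and then uses the uniform $(2+\varepsilon)$-moments to upgrade to $\mathbb{L}^2$.
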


\begin{proof}We break the proof into several steps.\\

\noindent \textbf{Step 1:} Let us first prove the uniqueness of $\xi$ solution of \eqref{limit-moving}. For a test function $\psi\in~C^{1,2}_{b}(\R_{+}\times\R)$ of $s$ and $x$, we have by standard arguments that:
\begin{multline}
\label{edp-time}
\langle \xi_{t},\psi(t,.)\rangle=\langle \xi_{0},\psi(0,.)\rangle\nonumber\\
+\int_{0}^{t} \int_{\mathbb{R}}  \left\{  \partial_s \psi(s,x)+ \left(1-\frac{1}{2}x^2-\langle \xi_{s}, 1\rangle\right)\psi(s,x)-\sigma c \partial_x \psi(s,x)+\frac{\sigma^2}{2}\partial^2_{xx}\psi(s,x)\right\} \xi_{s}(dx)\, ds \end{multline}

Now,  we define for a fixed $t>0$, for $\varphi\in C^2_{b}(\R)$, the $C^{1,2}_{b}(\R)$ function $\psi^t$ by
$$\psi^t(s,x) = \mathbb{E}_{x}\Big(\varphi(Y_{t-s})\exp\big(-\int_{0}^{t-s}\frac{Y^2_{u}}{2}du\big)\Big),$$
where $Y$ is the drifted Brownian motion $dY_{t}= \sigma(dB_{t}-cdt)$.
Then
\begin{multline} \partial_s(\psi^t)(s,x)+ \Big(1-\frac{1}{2}x^2-\langle \xi_{s}, 1\rangle\Big)\psi^t(s,x)-\sigma c \partial_x(\psi^t)(s,x)+\frac{\sigma^2}{2}\partial^2_{xx}(\psi^t)(s,x)\\
  = \left(1-\langle \xi_{s}, 1\rangle\right)\psi^t(s,x),\end{multline}
since $\mathbb{E}_{x}\big(\varphi(Y_{t-s})\big)$ is  solution of the backward ``heat" equation. Noting that $\psi^t(t,x) = \varphi(x)$,
and  coming back to \eqref{edp-time} with this function, we obtain
$$\langle \xi_{t},\varphi\rangle=\langle \xi_{0}, \psi^t(0,.)\rangle+\int_{0}^{t} \int_{\mathbb{R}}   \Big(1-\langle \xi_{s}, 1\rangle\Big)\psi^t(s,x) \xi_{s}(dx)\, ds.$$

Notice that if $\|\varphi\|_\infty \le 1$, then $\|\psi^t(s,.)\|_{\infty}\le 1$. By a Gronwall argument we easily prove (see for example Fournier and M\'el\'eard \cite{fourniermeleard}) that two solutions in $\Co([0,T],\mathcal{M}_{F}(\R))$ of this equation started with the same initial condition coincide.

Since the transition semi-group $(P_{t})$ of the  process $(Y_{t})$ is absolutely continuous with respect to Lebesgue measure for any $t>0$, we also deduce by using Fubini's theorem that the same property holds for $\xi_{t}$. Then we write
$$\xi_{t}(dx) = f(t,x) dx$$
and the function $f$ is the unique weak solution of \eqref{pde} issued from $\xi_{0}$.

\medskip
\noindent \textbf{Step 2:} The proof of the convergence is obtained by a compactness-identification-uniqueness argument and the tightness is deduced from the uniform moments obtained in Lemma \ref{lem:propagation-moment}. It is postponed in Appendix.\\

\noindent \textbf{Step 3:} Since the sequence of processes is proved to converge in law in $\mathbb{D}([0,T],\mathcal{M}_F(\R))$ to a deterministic function, it also converges in probability. The limit is continuous in time and thus the convergence is also a uniform convergence (see \cite[p. 124]{billingsley_convergence_1999}).
Then we have proved that for any $T>0$, for any  continuous and bounded function $\varphi$, for any $\varepsilon>0$,
$$\lim_{K\to \infty}\P(\sup_{t\le T} |\langle Z^{K}_{t},\varphi\rangle - \langle \xi_{t},\varphi\rangle|>\varepsilon) = 0.$$
Moreover, uniform moment estimates  yield uniform integrability and then we also have \eqref{CV-L2}.
\end{proof}

\subsection{A unique positive stationary distribution}
\label{sec:F}
 For Equation \eqref{pde}, computation is simple and the existence and explicit value of a stationary state are easy to obtain. Uniqueness is more delicate. In the next sections, we will be interested in considering as initial condition $\xi_{0}$ the stationary state of Equation \eqref{pde}.
\begin{prop}\label{prop:stationary}
There exists a unique non zero positive  stationary distribution of \eqref{pde} if and only if \be
\label{persistence}\frac{c^2}{2}+\frac{\sigma}{2} <1.\ee
In this case, the equilibrium   is given by
\begin{equation}
\label{Fgauss}
F(x)=\frac{\lambda}{\sqrt{2\pi\sigma}}\exp\left(-\frac{(x+c)^{2}}{2\sigma} \right),
\end{equation}
with
\begin{equation}\label{lambda}\|F\|_1=\lambda=1-\frac{c^2}{2}-\frac{\sigma}{2}.\end{equation}
\end{prop}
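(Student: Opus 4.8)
The plan is to dispatch existence and the explicit formula first (the easy part) and then concentrate on uniqueness (the delicate part). A stationary solution $F$ of \eqref{pde} with total mass $m := \|F\|_1$ solves the linear second-order ODE
\[
\frac{\sigma^2}{2} F'' + \sigma c\, F' + \Big(1 - m - \frac{x^2}{2}\Big) F = 0,
\]
in which $m$ enters as an unknown parameter, fixed self-consistently by $m = \int_\R F$. For existence I would insert the Gaussian ansatz $F(x) = A\exp(-(x+c)^2/(2\sigma))$: a direct computation of $F'$ and $F''$ makes the $x^2$ and $x$ terms cancel and leaves the scalar identity $1 - m - c^2/2 - \sigma/2 = 0$, that is $m = \lambda = 1 - c^2/2 - \sigma/2$. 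Since $m = \|F\|_1 = A\sqrt{2\pi\sigma}$ must be positive, this is consistent exactly when $\lambda > 0$, which is the persistence condition \eqref{persistence}; it also fixes $A = \lambda/\sqrt{2\pi\sigma}$, recovering \eqref{Fgauss}.

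For uniqueness, the key step is to recognize the ODE as a disguised harmonic oscillator. The substitution $F(x) = e^{-cx/\sigma}\phi(x)$ removes the drift term: a short computation gives
\[
-\frac{\sigma^2}{2}\phi'' + \frac{x^2}{2}\phi = E\,\phi, \qquad E := 1 - m - \frac{c^2}{2},
\]
which is the eigenvalue problem for the quantum harmonic oscillator $H = -\tfrac{\sigma^2}{2}\partial^2_{xx} + \tfrac{x^2}{2}$. This operator has compact resolvent, hence purely discrete spectrum; testing $\phi_0 = e^{-x^2/(2\sigma)}$ and $\phi_1 = x\,e^{-x^2/(2\sigma)}$ identifies the eigenvalues $E_n = \sigma/2 + n\sigma$, $n\ge 0$, with the Hermite functions as (simple) eigenfunctions.

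The positivity of $F$ is what selects the eigenvalue. A positive integrable solution of the ODE corresponds to a positive, hence nowhere-vanishing, integrable $\phi$; by the Sturm oscillation theorem the $n$-th eigenfunction of $H$ has exactly $n$ zeros, so the only one-signed eigenfunction is the ground state $\phi_0$. Therefore $E = E_0 = \sigma/2$, which forces $m = 1 - c^2/2 - \sigma/2 = \lambda$, the same self-consistent mass as above, and $\phi$ proportional to $\phi_0$. Undoing the substitution gives $F(x) \propto e^{-cx/\sigma}e^{-x^2/(2\sigma)} = \text{const}\cdot e^{-(x+c)^2/(2\sigma)}$, and the normalization $\|F\|_1 = \lambda$ fixes the constant uniquely, while $\lambda > 0$ is again necessary and sufficient for a nonzero positive solution to exist.

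The main obstacle is precisely this uniqueness step: one must rigorously justify that any positive integrable stationary solution is an $L^2$-eigenfunction of $H$. This requires a connection/decay analysis at $\pm\infty$ — for generic $E$ the solution decaying like $e^{-x^2/(2\sigma)}$ at $+\infty$ grows like $e^{+x^2/(2\sigma)}$ at $-\infty$, so two-sided integrability holds only at the eigenvalues — together with the Sturm oscillation theorem to exclude the excited states. The genuine subtlety is that the nonlocal self-consistency $m = \|F\|_1$ a priori permits a continuum of candidate masses; it is exactly the spectral rigidity, namely that only the ground state is of one sign, that collapses this to the single value $m = \lambda$.
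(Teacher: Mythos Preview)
Your proof is correct and takes a genuinely different route from the paper. Both proofs handle existence identically (Gaussian ansatz, direct verification), but diverge on uniqueness. The paper does \emph{not} reduce to the harmonic oscillator. Instead it works directly with the non-self-adjoint operator $A\phi=-\tfrac{\sigma^2}{2}\phi''-c\sigma\phi'+\tfrac{x^2}{2}\phi$: it pairs any positive eigenfunction $\varphi$ against the adjoint eigenfunction $\widetilde F$ (the Gaussian with $c\mapsto -c$) to force the eigenvalue, hence the mass, to equal $\alpha=1-\lambda$; and then uses the square-root trick $\psi=\sqrt{F\varphi}$ together with the algebraic identity $A\psi-\alpha\psi=-\tfrac{\sigma^2}{8}(\varphi'F-\varphi F')^2/\psi^3\le 0$, integrated against $\widetilde F$, to conclude $\varphi'F=\varphi F'$ and hence $\varphi\propto F$. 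This argument (credited in the paper to P.-L.~Lions) is entirely self-contained and avoids any appeal to Sturm--Liouville oscillation theory or to the explicit spectrum of $H$.

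Your route --- gauge away the drift to reach the self-adjoint oscillator and invoke Sturm oscillation to select the ground state --- is arguably more transparent and makes the quantization of the mass $m$ visibly a spectral phenomenon. Its cost is exactly the point you flag: one must argue that integrability of $F$ forces Gaussian decay of $\phi=e^{cx/\sigma}F$ at both ends (via the $e^{\pm x^2/(2\sigma)}$ asymptotic dichotomy), so that $\phi$ is a genuine $L^2$ eigenfunction. The paper's duality/square-root argument sidesteps this decay analysis entirely, and would transplant more readily to potentials that are not exactly quadratic; your approach, by contrast, exploits the special structure to make contact with a textbook spectrum.
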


Let us note that under Condition \eqref{persistence}, the population will persist in long time and its long time density admits a mode in $-c$. This value differs from the optimal trait $0$, which can be interpreted as a lag in the adaptation to environmental change. See Fig. \ref{fig:densiteF}. Indeed, in long time, the solution $u$ of \eqref{pde0} behaves as $F(x-c\sigma t)$ optimal at $-c+c\sigma t$.

\begin{figure}[!ht]
\begin{center}
 \includegraphics[width=9cm,height=6cm]{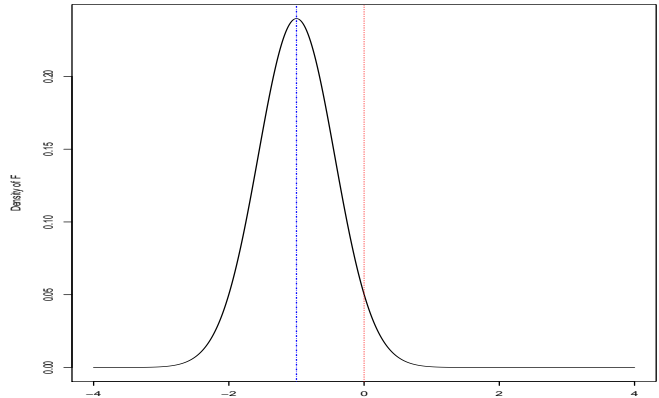}
\caption{{\small Density of the stationary solution $F$ of \eqref{pde}. The mode of this density is $-c=-1$ here, and the variance of $F$ is $\sigma=0.32$, so that $\lambda=0.34$.}
\label{fig:densiteF}}
\end{center}
\end{figure}

\me
\begin{proof} The announced proposition can be obtained from general results in the literature, as the ones of Cloez and Gabriel \cite{cloezgabriel}. We give here a simple proof. Deriving twice  the function $F$ defined in \eqref{Fgauss} and replacing in \eqref{pde}  proves that
\be
\label{Stationary}
\frac{\sigma^2}{2} F''+c\sigma F'+\left(1-\frac{1}{2}x^2-\int_{\mathbb{R}}F(y)dy  \right)F = 0,
\ee
if and only if $\lambda = 1 - {c^2\over 2}-{\sigma\over 2}$. Then under this condition, $F$ is a stationary state.

Let us define the operator $A$ on $C_{b}^2(\R)$ by
$$A\phi =- \frac{\sigma^2}{2}\phi'' - c\sigma \phi'+\frac{1}{2}x^2 \,\phi,$$
for $\phi\in \Co_b^2(\R,\R)$.
Then $F$ is solution of $AF = \alpha F$, with $\alpha = 1-\int F $. Let us also notice that if $\widetilde F$ is defined as $F$ but replacing in \eqref{Fgauss} $c$ by $-c$, then $A^* \widetilde F = \alpha \widetilde F$. Let us now consider  $(\mu, \varphi)$ a solution of  $A \varphi = \mu\, \varphi$ for a positive function $\varphi$ satisfying $\|\varphi\|_{1} = \lambda$.
Then we have
$$\int A \varphi \widetilde F - \int A^*\widetilde F \varphi = 0 = \int (\alpha-\mu) \varphi \widetilde F,$$
with positive functions $\varphi, \widetilde F$ and then $\mu=\alpha$.

Let us now prove that $\varphi = F$. Straightforward computation with  $\psi= \sqrt{F\,\varphi}$ yields
$$A\psi - \alpha \psi= -{\sigma^2\over 8} \frac{(\varphi' F - \varphi F')^2}{\psi^{3}}\le 0.$$
Further, we note that
$$\int A \psi \widetilde F = \int \psi A^* \widetilde F =  \alpha \int \psi \widetilde F.$$
Then , if $D= -{\sigma^2\over 8} \frac{(\varphi' F - \varphi F')^2}{\psi^{3}}$, then
$\int D \widetilde F = 0$, with $D \widetilde F\le 0$ and we deduce that $D=0$ (since $D$ is continuous). Then  we obtain
$\varphi' F  = \varphi F'$ and finally  that $ (\sqrt{{\varphi\over F}})' = 0$, which  implies that $\varphi$ and $F$ are proportional. Since they are both positive with the same $\mathbb{L}^1$ norm, they are equal.

\end{proof}

The next corollary is then an obvious consequence of \eqref{CV-L2}.
\begin{cor}
\label{initialisation-F} Let us assume that  the initial measures $Z^{K}_{0}$ converge weakly to $F(x)dx$ when $K$ tends to infinity, then for any  continuous and bounded function $\varphi$
 \begin{equation}
 \label{CV-F}\lim_{K\to \infty}\E(\sup_{t\le T} |\langle Z^{K}_{t},\varphi\rangle - \langle F,\varphi\rangle|^2) = 0.\end{equation}
\end{cor}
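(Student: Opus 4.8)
The plan is to invoke Theorem \ref{thm-CV} with the specific initial datum $\xi_0(dx)=F(x)\,dx$ and to show that the associated deterministic limit is constant in time, equal to $F$. Once this identification is made, the desired estimate \eqref{CV-F} is nothing but \eqref{CV-L2} written with $\xi_t\equiv F$, which explains why the corollary is stated as an immediate consequence.

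First I would check that the hypotheses of Theorem \ref{thm-CV} are in force. The weak convergence $Z^K_0\to F(x)\,dx$ is assumed, and since $F$ is a weighted Gaussian density it has finite moments of every order, so the uniform bounds \eqref{hyp:moment} on the initial data are available. Theorem \ref{thm-CV} then applies and produces a unique limit $(\xi_t)_{t\le T}$ solving the weak equation \eqref{limit-moving} with $\xi_0=F(x)\,dx$, together with the $\mathbb{L}^2$ convergence \eqref{CV-L2}.

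The heart of the argument is the identification $\xi_t\equiv F$. I would verify directly that the constant trajectory $t\mapsto F(x)\,dx$ solves \eqref{limit-moving}. Substituting $\xi_s=F$ makes the mass term constant, $\langle\xi_s,1\rangle=\|F\|_1=\lambda$, and, integrating by parts to transfer the derivatives $\varphi'$ and $\varphi''$ onto $F$, the inner integral becomes $\int_{\R}\varphi(x)\bigl(\tfrac{\sigma^2}{2}F''(x)+c\sigma F'(x)+(1-\tfrac12 x^2-\lambda)F(x)\bigr)\,dx$. By the stationarity relation \eqref{Stationary} established in Proposition \ref{prop:stationary}, the parenthesis vanishes identically, so the time integral in \eqref{limit-moving} is zero and $\langle\xi_t,\varphi\rangle=\langle F,\varphi\rangle$ for all $t$ and all test functions $\varphi\in C^2_b(\R)$. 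Hence the constant trajectory is a solution, and by the uniqueness part of Theorem \ref{thm-CV} (Step 1) it is \emph{the} solution, i.e.\ $\xi_t=F(x)\,dx$ for every $t\in[0,T]$.

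There is no serious obstacle here: the only point requiring a short computation is the integration by parts reducing the weak equation to the already-proved ODE \eqref{Stationary}, which is routine because $F$ and its derivatives are integrable and $\varphi\in C^2_b(\R)$. Substituting $\xi_t=F$ into \eqref{CV-L2} then yields \eqref{CV-F} at once, which is exactly the claim.
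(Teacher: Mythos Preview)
Your approach is exactly the one intended by the paper, which simply declares the corollary ``an obvious consequence of \eqref{CV-L2}'': apply Theorem \ref{thm-CV} with $\xi_0=F$, identify the limit as the constant trajectory $\xi_t\equiv F$ via the stationarity equation \eqref{Stationary} and the uniqueness in Step~1, and read off \eqref{CV-F} from \eqref{CV-L2}.

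One small slip: you write that ``since $F$ is a weighted Gaussian density it has finite moments of every order, so the uniform bounds \eqref{hyp:moment} on the initial data are available.'' This is not a valid deduction---\eqref{hyp:moment} is a uniform-in-$K$ moment bound on the random measures $Z^K_0$, and finiteness of the moments of the deterministic limit $F$ says nothing about that. The moment hypothesis \eqref{hyp:moment} is a standing assumption in the paper and should simply be carried over, not derived from properties of $F$.
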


\section{Feynman-Kac approach for an auxiliary branching-diffusion process}\label{sec:Feynman-Kac}
\subsection{Coupling of the process $Z^K$ with a branching-diffusion process}\label{sec:coupling}

  Let us assume in all what follows that  the initial measures $Z^{K}_{0}$ weakly converge to $F(x)dx$ when $K$ tends to infinity as in the Assumption of Corollary \ref{initialisation-F}.

 \me As explained in introduction, we are interested in capturing the genealogies of our particle system. Recall that the ancestral lineage or past history of an individual living at time $T$ consists in the succession of ancestral traits: it is obtained by the concatenation of the (diffusive) paths of this individual with the path of their parent before their birth, then with the path of their grand-parent before the birth of their parent etc. To sum up, the lineage of an individual alive at time $T$ is the path that associates with each time $t\leq T$ the trait of its most recent ancestor at this time. Because of the interactions between individuals, the shape of the lineages of living individuals reflects the competition terms in the past, with lineages that might be extinct. Thus, obtaining an equation describing the ancestry of a ``typical individual'' chosen at random in the population at $T$ is difficult to obtain. See for example the developments of Perkins \cite{perkinsAMS} but with assumptions that exclude logistic competition or see the attempts in \cite{meleardtran_suphist}. Corollary \ref{initialisation-F} suggests us to replace the interaction logistic term $ \langle Z^{K}_{t},1\rangle$ by the constant $\|F\|_1 = \int F(x)dx$. The new process is a much more tractable  branching particle system.

 \me
Therefore we couple $Z^{K}$ with an auxiliary measure-valued process $(\widetilde{Z}^{K}_{t})_{t\ge 0}$,  started from the same initial condition  $Z_{0}^K$ and with the same transitions, except that  the logistic term is frozen at $\lambda=\|F\|_1$ (see Appendix \ref{app:sde}).\\

For the auxiliary process, \eqref{process-moving} becomes, for any $\varphi\in C^2_{b}(\mathbb{R})$,
\begin{equation}
\label{ref:partSys}
\langle\widetilde{Z}^{K}_{t},\varphi\rangle=\langle {Z}^{K}_{0},\varphi\rangle+ \int_0^t \int_{\R} \left\{ \left(1-\frac{1}{2}x^2-\lambda\right)\varphi(x)-c\sigma \varphi'(x)+\frac{\sigma^2}{2}\varphi''(x)\right\}\widetilde{Z}^{K}_{s}(dx) \ ds+\widetilde M^{K,\varphi}_{t},
\end{equation}
where $\widetilde M^{K,\varphi}$ is a square integrable martingale with predictable quadratic variation
\be
\label{qvAux}
\langle \widetilde M^{K,\varphi}\rangle_{t}= {1\over K} \int_{0}^t  \int_{\R} \Big\{  \Big(1+{x^2\over 2} + \lambda)\varphi^2(x) + {\sigma^2} (\varphi')^2(x) \Big)\Big\} \widetilde Z^{K}_{s}(dx) ds.
\ee

Let us remark that with the same arguments as in Theorem \ref{thm-CV}, we can prove that  for any $T>0$,  the measure-valued process
$\,\widetilde{Z}^{K}\,$ converges in $\mathbb{D}([0,T],\mathcal{M}_F(\R))$ uniformly and in probability  to the unique  weak solution $(\widetilde \xi_{t}, t\ge  0)$  of
 \begin{equation}
\label{moving-lin}
\partial_{t}\widetilde \xi(t,x) =\frac{\sigma^2}{2}\Delta \widetilde \xi(t,x)+c\sigma \partial_{x}\widetilde \xi(t,x)+\left(1-\frac{1}{2}x^2- \lambda \right)\widetilde \xi(t,x).
\end{equation}
{starting from the initial data $\widetilde \xi(0,x) = F(x)$.}
By an analogous argument as previously, the measure $\widetilde \xi_{t}$ has a density for any $t>0$ whose uniqueness is classical. Further $F$ is also its unique positive stationary distribution (with given norm).
We also have a similar convergence as in \eqref{CV-F}: for any  continuous and bounded function $\varphi$,
\begin{equation}
 \label{CV-L2tilde}\lim_{K\to \infty}\E(\sup_{t\le T} |\langle \widetilde Z^{K}_{t},\varphi\rangle - \langle F,\varphi\rangle|^2) = 0.\end{equation}

\me As an immediate corollary, we can couple the  process $Z^K$ and the branching-diffusion process  $ \widetilde Z^{K}$, using that $|\langle \widetilde Z^{K}_{t},\varphi\rangle - \langle Z^K_t,\varphi\rangle|\leq |\langle \widetilde Z^{K}_{t},\varphi\rangle - \langle F,\varphi\rangle|+|\langle  Z^{K}_{t},\varphi\rangle - \langle F,\varphi\rangle|$.

\begin{prop}
\label{proximity}
Assume that the initial conditions $(Z^K_0)_{K}$ satisfy \eqref{hyp:moment} and that $\,{Z}^{K}_{0}\xrightarrow[K\to\infty]{w} F\,$. Then for any  continuous and bounded function $\varphi$,
$$ \lim_{K\to+\infty}\E(\sup_{t\le T} |\langle Z^{K}_{t},\varphi\rangle-\langle\widetilde{Z}^{K}_{t},\varphi\rangle|^2) = 0.$$
\end{prop}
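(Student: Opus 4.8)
The plan is to read the result off the two $\mathbb{L}^2$ convergences that have just been established: \eqref{CV-F} for the original process $Z^K$ and \eqref{CV-L2tilde} for the auxiliary branching-diffusion $\widetilde Z^K$. Both processes start from the same initial data converging weakly to $F$, and both have been shown to converge, uniformly on $[0,T]$ and in $\mathbb{L}^2$, to the \emph{same} deterministic limit $\langle F,\varphi\rangle$. The entire argument consists in inserting this common limit and invoking the triangle inequality, exactly as suggested in the remark preceding the statement.

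Concretely, I would fix a continuous bounded function $\varphi$ and a horizon $T>0$, and set, for $t\le T$,
\[
a^K_t = \langle \widetilde Z^K_t,\varphi\rangle - \langle F,\varphi\rangle, \qquad b^K_t = \langle Z^K_t,\varphi\rangle - \langle F,\varphi\rangle.
\]
Subadditivity of the supremum gives $\sup_{t\le T}|\langle \widetilde Z^K_t,\varphi\rangle - \langle Z^K_t,\varphi\rangle| \le \sup_{t\le T}|a^K_t| + \sup_{t\le T}|b^K_t|$, and squaring together with the elementary bound $(u+v)^2\le 2u^2+2v^2$ yields
\[
\sup_{t\le T}|\langle \widetilde Z^K_t,\varphi\rangle - \langle Z^K_t,\varphi\rangle|^2 \le 2\sup_{t\le T}|a^K_t|^2 + 2\sup_{t\le T}|b^K_t|^2.
\]
Taking expectations and letting $K\to\infty$, the first term vanishes by \eqref{CV-L2tilde} and the second by \eqref{CV-F}, which closes the argument.

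There is essentially no obstacle internal to this proposition: its entire substance is borrowed from the two convergence statements, whose proofs in turn rest on the compactness-identification-uniqueness scheme of Theorem \ref{thm-CV} and on the identification of $F$ as the common stationary solution (Proposition \ref{prop:stationary}). The only point worth double-checking is that \eqref{CV-L2tilde} genuinely holds with the same uniform-in-time, $\mathbb{L}^2$ strength as \eqref{CV-F}: this was asserted ``by an analogous argument'' for $\widetilde Z^K$, so one should verify that the moment estimates of Lemma \ref{lem:propagation-moment} (used for tightness and for the uniform integrability that upgrades convergence in probability to $\mathbb{L}^2$) transfer to the frozen-nonlinearity process. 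They do, since replacing $\langle \widetilde Z^K_s,1\rangle$ by the constant $\lambda$ only bounds and simplifies the death rate, leaving the moment bounds valid verbatim.
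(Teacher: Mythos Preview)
Your proof is correct and matches the paper's approach exactly: the paper states the proposition as an ``immediate corollary'' of \eqref{CV-F} and \eqref{CV-L2tilde}, invoking precisely the triangle inequality $|\langle \widetilde Z^{K}_{t},\varphi\rangle - \langle Z^K_t,\varphi\rangle|\leq |\langle \widetilde Z^{K}_{t},\varphi\rangle - \langle F,\varphi\rangle|+|\langle Z^{K}_{t},\varphi\rangle - \langle F,\varphi\rangle|$ that you spell out. Your additional remark on why the moment estimates transfer to the frozen process is a welcome check but not required by the paper, which simply asserts \eqref{CV-L2tilde} by analogy.
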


We can now work with  the process $(\widetilde{Z}^{K}_{t})_{t\ge 0}$. The main improvement with this process is that the nonlinearity has been tackled. Therefore the process satisfies the branching property and we are authorized  to use some classical tools for these processes.

\subsection{Coupling of the historical processes}\label{section:coupling_historique}

Until now, we described the evolving distributions of the trait, but the individual dimension is lost when the population becomes large. In the sequel, we will also investigate the large population dynamics of the historical processes, which at a time $t$ describes the trait ancestry of individuals alive at that time. Recall the definition of the lineage of an individual given in Section \ref{sec:measure-valued-sto-pro}. For an individual $i\in V^K_T$, let us define their lineage $X^i$. In \eqref{def:ZK}, the labels are taken in the Ulam-Harris-Neveu set $\mathcal{I}$ and we can define by $\preceq$ the usual partial order on $ \mathcal{I}$: $j \preceq i$  means that $j\in \mathcal{I}$ is the ancestor of $i\in \mathcal{I}$ i.e. that there exists $k\in \mathcal{I}$ such that $i=(j,k)$, the concatenation of the labels $j$ and $k$. If the individual $i\in {V}^K_{T}$ was living at time $t$, then ${X}^i(t)$ still denotes the position of $i$ at $t$. But if the individual $i$ was not born at time $t$, then, ${X}^i(t)={X}^j(t)$ where $j\prec i$ is the most recent ancestor of $i$ living at $t$.\\
Since an offspring inherits their parent's trait at birth, and since the trait evolves continuously according to a diffusion during an individual's life, such lineage is a continuous function on $\mathbb{R}$. The path is extended after time $t$ by the trait value at time $t$, so that this continuous function can be defined from $\R$ to $\R$ (and not from $[0,t]$ to $\R$).\\

\medskip

Here, we will adopt the approach developed in Dawson and Perkins \cite{dawsonperkinsAMS} or M\'el\'eard and Tran \cite{meleardtran_suphist}.
Let us define the historical process $H^K$ as the following càdlàg process with values in $\mathcal{M}_F(\cal{C}(\R_+,\R))$:
\begin{equation}H^K_t=\frac{1}{K} \sum_{i\in V^K_t} \delta_{X^i_{.\wedge t}},\label{def:HKt}\end{equation}
where $(X^i_{s\wedge t},\ s\in \R_+)$ is the lineage of the individual $i\in V^K_t$.
To investigate the asymptotic behavior of this process, we introduce (as in Dawson \cite{dawson} or Etheridge  \cite{etheridgebook}) the class of test functions on paths of the form: $\forall y\in \cal{C}(\R_+,\R)$,
\begin{equation}
\label{test-function}
 \varphi(y) = \prod_{j=1}^m g_{j}(y_{t_{j}}),
 \end{equation}
for $m\in \mathbb{N}^*$, $0=t_0\le t_{1}<\cdots <t_{m}$ and $\forall j \in \{1,\cdots,m\}$, $g_{j}\in \Co_{b}^2(\mathbb{R}, \mathbb{R})$. As proved in \cite{dawson}, this class is convergence determining.

\medskip
If $y$  is a continuous path stopped at time $t$ (as the trajectories chosen according $H^K_{t}(dy)$), then
$$\varphi(y)=\prod_{j=1}^m g_{j}(y_{t_{j}\wedge t}) =  \sum_{k=0}^{m-1}\ind_{[t_k,t_{k+1})}(t)\Big(\prod_{j=1}^k g_j(y_{t_j}) \, \prod_{j=k+1}^m g_j(y_t)\Big).$$
and, we introduce:
\begin{equation}
\label{derivee}
\widetilde{D}\varphi(t,y)=  \sum_{k=0}^{m-1}\ind_{[t_k,t_{k+1})}(t)\Big(\prod_{j=1}^k g_j(y_{t_j}) \big(\prod_{j=k+1}^m g_j\big)'(y_t)\Big)
\end{equation}
and
\begin{equation}
\label{laplacien}
\widetilde{\Delta}\varphi(t,y)=  \sum_{k=0}^{m-1}\ind_{[t_k,t_{k+1})}(t)\Big(\prod_{j=1}^k g_j(y_{t_j}) \Delta\big(\prod_{j=k+1}^m g_j\big)(y_t)\Big)
\end{equation}
With this notation, the next lemma is obtained by a direct adaptation of the results in \cite{champagnatmeleard} and can be founded in Appendix:
\begin{lem}
\label{martingale}
Assume that \[\sup_{K\in\mathbb{N}^{\ast}}\E(\langle H^{K}_{0},1\rangle^{2+\varepsilon}+\langle H^K_0(dy),y_0^2\rangle^{1+\varepsilon}) = \sup_{K\in\mathbb{N}^{\ast}}\E(\langle Z^{K}_{0},1\rangle^{2+\varepsilon}+\langle Z^K_0,x^2\rangle^{1+\varepsilon})<+\infty.\,\]
For $\varphi$ defined in \eqref{test-function},
\begin{align}
   \langle H^K_t,\varphi\rangle = & \langle H^K_0,\varphi\rangle + \int_0^t \int_{\Co(\R_+,\R)} \Big(
\frac{\sigma^2}{2} \widetilde{\Delta}\varphi(s,y)-\sigma c \widetilde{D}\varphi(s,y) \nonumber\\
+ & \big( 1- \frac{y_s^2}{2} - \langle H^K_s,1\rangle\big)\varphi(y)\Big)H^K_s(dy)\, ds+    \mathcal{M}^{K,\varphi}_t,\label{martingalecasdawson}
\end{align}
where $\mathcal{M}^{K,\varphi}_t$ is a square integrable martingale with predictable quadratic variation process:
\begin{align}\label{eq:crochet}
    \langle \mathcal{M}^{K,\varphi}\rangle_t= \frac{1}{K} \int_0^t \int_{\Co(\R_+,\R)}  \Big( \big( 1+ \frac{y_s^2}{2} +\langle H^K_s,1\rangle\big)\varphi^2(s,y)  { +  \sigma^2 (\widetilde{D}\varphi(s,y) )^2}
    \Big) H^K_s(dy)\, ds.
\end{align}
\end{lem}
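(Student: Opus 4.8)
The plan is to derive \eqref{martingalecasdawson} directly from the trajectorial, Poisson-driven construction of the particle system of Appendix~\ref{app:sde}, viewing $H^K$ as a measure-valued process whose atoms are entire stopped lineages $X^i_{.\wedge t}$ rather than current traits, and then to apply It\^{o}'s formula for jump--diffusions (as in \cite{ikedawatanabe}) to $\langle H^K_t,\varphi\rangle$ for $\varphi$ of the cylindrical form \eqref{test-function}. The computation follows the scheme already used for $Z^K$ in \eqref{process-moving} and in \cite{champagnatmeleard}; the only genuinely new feature is the path dependence of $\varphi$ through the deterministic marks $0=t_0\le t_1<\cdots<t_m$, which is exactly the point where care is needed and where the correction of \cite{kliem,chihdr} to \cite{meleardtran_suphist} is relevant.

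The starting observation is the one recorded just before the statement: for a path $y$ stopped at time $t$, $\varphi(y)$ coincides on each interval $[t_k,t_{k+1})$ with a smooth function of the single live value $y_t$ and of the finitely many frozen past values $y_{t_1},\dots,y_{t_k}$. Hence, between two consecutive marks, the functional is genuinely cylindrical in the live coordinate, and the operators $\widetilde D\varphi$ and $\widetilde\Delta\varphi$ of \eqref{derivee}--\eqref{laplacien} are precisely its first and second derivatives in that coordinate, the indicator decomposition keeping track of which factors are already locked. The three contributions to the dynamics of $\langle H^K_t,\varphi\rangle$ are then identified separately. First, between jumps each living lineage has its current endpoint driven by $\sigma\,dB^i-c\sigma\,dt$; applying It\^{o} on each interval $[t_k,t_{k+1})$ produces the generator $\frac{\sigma^2}{2}\widetilde\Delta\varphi-\sigma c\,\widetilde D\varphi$ acting on the live coordinate, and since the paths are a.s.\ continuous at each deterministic mark $t_k$, no boundary term appears when the form of $\varphi$ switches. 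Second, at a birth from individual $i$ (rate $1$) a new atom is created whose stopped lineage agrees at the birth instant with the parent's, contributing $+\frac1K\varphi(X^i_{.\wedge t})$; after compensation this yields the drift term $+\varphi(y)$. Third, at a death of $i$ (rate $y_s^2/2+(N-1)/K$) the atom $\delta_{X^i_{.\wedge t}}$ is removed, contributing $-\frac1K\varphi$; summing the competition part over the population and identifying $N_s/K=\langle H^K_s,1\rangle$ up to the $1/K$ self-interaction correction handled exactly as in \eqref{process-moving} produces the term $-\big(\frac{y_s^2}{2}+\langle H^K_s,1\rangle\big)\varphi(y)$. Collecting these gives the finite-variation part of \eqref{martingalecasdawson}, and what remains is the compensated jump-plus-diffusion martingale $\mathcal{M}^{K,\varphi}$.

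For the bracket \eqref{eq:crochet} I would compute the predictable quadratic variation of $\mathcal{M}^{K,\varphi}$ by splitting it into its purely discontinuous part, coming from the compensated Poisson births and deaths, and its continuous part, coming from the driving Brownian motions. Each jump changes $\langle H^K_t,\varphi\rangle$ by $\pm\frac1K\varphi$, so the discontinuous bracket is $\frac1K$ times the integral of $\varphi^2$ against $H^K_s$ weighted by the total jump rate $1+y_s^2/2+\langle H^K_s,1\rangle$ per atom; each Brownian increment contributes, through $\frac1K\sigma\,\widetilde D\varphi\,dB^i$, a continuous bracket $\frac1K\langle H^K_s,\sigma^2(\widetilde D\varphi)^2\rangle\,ds$. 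Their sum is exactly \eqref{eq:crochet}.

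Finally, square-integrability of $\mathcal{M}^{K,\varphi}$, i.e.\ $\E\langle\mathcal{M}^{K,\varphi}\rangle_t<\infty$, follows from the moment hypothesis of the statement together with the propagation estimates of Lemma~\ref{lem:propagation-moment}: the $g_j$ and their derivatives are bounded, so $\varphi$ and $\widetilde D\varphi$ are bounded, and the only unbounded weight $y_s^2$ is controlled through $\langle H^K_s,y_s^2\rangle=\langle Z^K_s,x^2\rangle$, whose supremum over $[0,T]$ has finite expectation uniformly in $K$ by \eqref{eq:propagation}, while $\langle H^K_s,1\rangle^2=\langle Z^K_s,1\rangle^2$ is likewise integrable. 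The main difficulty is not analytic but bookkeeping: one must verify that passing through the deterministic marks $t_k$ introduces no spurious terms and that $\widetilde D$ and $\widetilde\Delta$ act only on the live coordinate --- precisely the subtlety that affected the first version of \cite{meleardtran_suphist} and was fixed in \cite{kliem,chihdr}.
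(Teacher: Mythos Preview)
Your proposal is correct and follows essentially the same route as the paper: Appendix~\ref{app:sde-hist} writes the pathwise SDE for $\langle H^K_t,\varphi\rangle$ driven by the Poisson point measure and the Brownian motions (equation \eqref{eq:sdeprev}), then compensates the Poisson terms to isolate the drift and the martingale $\mathcal{M}^K(\varphi)$ with bracket \eqref{eq:mart}, exactly as you outline. Your treatment of the deterministic marks $t_k$, the identification $\langle H^K_s,1\rangle=\langle Z^K_s,1\rangle$ and $\langle H^K_s,y_s^2\rangle=\langle Z^K_s,x^2\rangle$ for the integrability argument, and the splitting of the bracket into jump and diffusive parts all match the paper's computation.
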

We extend here the mild formula \eqref{mild-proc}.
For $t>0$ fixed, for $m\in \mathbb{N}^*$, $0\le t_{1}<\cdots <t_{m}$ and $\forall j \in \{1,\cdots,m\}$, $g_{j}\in C_{b}^2(\mathbb{R}, \mathbb{R})$, we define for $0\le s< t$ and $y\in \Co(\R_+,\R)$  a  generalized version of the semigroup as
 \begin{equation}
\psi^t(s,y) =  \sum_{k=0}^{m-1}\ind_{[t_k,t_{k+1})}(s)\Big(\prod_{j=1}^k g_j(y_{t_j}) S^{t_{k+1} \wedge t}\big(  \prod_{j=k+1}^m g_j\big)(s,y_s)\Big),
 \end{equation}
 where $S^t(g)(s,x)= \mathbb{E}_{x}\big(g(\widetilde{Y}_{t-s})\exp(-\int_{0}^{t-s} \frac{\widetilde{Y}_{u}^2}{2}du)\big)$ and $\widetilde{Y}_{t}= x+\sigma(B_{t}-ct)$.
  Note that $\psi^t(t,y)=\varphi(y)$ and that
  \[ -\frac{y_{s}^2}{2} \psi^t(s,y)+ \partial_s \psi^t(s,y)+ \frac{\sigma^2}{2} \widetilde{\Delta}_y\psi^t(s,y)-\sigma c \widetilde{D}_y\psi^t(s,y)=0.\]
  The next lemma follows from this property and from Appendix \ref{app:sde-hist} (see \eqref{eq:sde} and \eqref{eq:mart}).

\begin{lem} Under the same assumptions as in Lemma  \ref{martingale},
\begin{multline}
   \langle H^K_t,\varphi\rangle = \langle H^K_0(dy),\psi^t(0,y)\rangle
    +  \int_0^t \int_{\Co(\R_+,\R)}  \big( 1 - \langle H^K_s,1\rangle\big)\psi^t(s,y) H^K_s(dy)\, ds+  R_{t,t}^{K}\label{martingale:mild}
\end{multline}
where for $R^K_{u,t}$ is defined for $u\leq t$ by:
  \begin{multline}
  R^K_{u,t} =  \frac{1}{K} \int_0^u  \sum_{i\in V^K_{s}} \sigma \partial_x \psi^t(s,X^i_{(.\wedge s)}) dB^i_s
  +  \frac{1}{K}\int_0^u \int_{\mathcal{I}}\int_{\R_+}\ind_{\{i\in V^K_{s-}\}} \Big[ \psi^t(s,X^i_{(.\wedge s)})\ind_{\{\theta\leq 1\}} \\ -\psi^t(s,X^i_{(.\wedge s)}) \ind_{\{1<\theta\leq 1+\frac{(X^i_{s})^2}{2}+\langle H_{s-}^{K},1\rangle\}} \Big] \tilde N(ds, di, d\theta).\label{def:Rut}
  \end{multline}
  For any $T>0$, there exists a positive constant $C_{T}$ such that for all $0\leq s\leq t\leq T$
 \begin{equation}\mathbb{E}\big(\sup_{u\le t}(R^K_{u,t})^2\big) < \frac{C_{T}}{K}.\label{estimee:R}\end{equation}
\end{lem}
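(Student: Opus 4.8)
\emph{Establishing the mild formula.} The plan is to apply the time-dependent trajectorial Itô formula of Appendix \ref{app:sde-hist} (that is, \eqref{eq:sde}--\eqref{eq:mart}) to the real-valued semimartingale $s\mapsto\langle H^K_s,\psi^t(s,\cdot)\rangle$, exploiting the backward equation satisfied by $\psi^t$. On each interval $[t_k,t_{k+1})$ the function $\psi^t$ is smooth in its spatial argument and $C^1$ in $s$, and the identity
\[ -\frac{y_s^2}{2}\psi^t(s,y)+\partial_s\psi^t(s,y)+\frac{\sigma^2}{2}\widetilde{\Delta}_y\psi^t(s,y)-\sigma c\widetilde{D}_y\psi^t(s,y)=0 \]
shows that the generator contribution $\frac{\sigma^2}{2}\widetilde{\Delta}\psi^t-\sigma c\widetilde{D}\psi^t$, together with the time derivative $\partial_s\psi^t$, exactly cancels the natural-death weight $-\frac{y_s^2}{2}\psi^t$ appearing in the semimartingale decomposition \eqref{martingalecasdawson}. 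What survives of the finite-variation part is the birth contribution $+\psi^t$ and the competition contribution $-\langle H^K_s,1\rangle\psi^t$, i.e.\ the drift $(1-\langle H^K_s,1\rangle)\psi^t$ of \eqref{martingale:mild}. The nested semigroup structure of $\psi^t$ makes it continuous in $s$ across the deterministic times $t_1,\dots,t_m$, so no boundary contributions arise there; using $\psi^t(t,y)=\varphi(y)$ and reading off the martingale part from \eqref{eq:mart} yields \eqref{martingale:mild} with the remainder $R^K_{t,t}$ of \eqref{def:Rut}.

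\emph{The remainder is a martingale whose bracket carries a factor $1/K$.} For fixed $t$, the process $u\mapsto R^K_{u,t}$ in \eqref{def:Rut} is the sum of a continuous local martingale driven by the $B^i$ and a purely discontinuous martingale driven by the compensated measure $\widetilde{N}$; the two are orthogonal, so their brackets add. I would compute the predictable quadratic variation term by term. The Brownian part contributes $\frac{\sigma^2}{K^2}\int_0^u\sum_{i\in V^K_s}(\partial_x\psi^t(s,X^i_{\cdot\wedge s}))^2\,ds$. In the jump part, the birth indicator $\ind_{\{\theta\le1\}}$ and the death indicator $\ind_{\{1<\theta\le 1+(X^i_s)^2/2+\langle H^K_{s-},1\rangle\}}$ have disjoint supports in $\theta$, so squaring and integrating $d\theta$ against the intensity produces the factor $1+(X^i_s)^2/2+\langle H^K_{s-},1\rangle$. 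Rewriting both sums as integrals against $H^K_s$ via $\frac1{K^2}\sum_{i\in V^K_s}f(X^i)=\frac1K\langle H^K_s,f\rangle$ gives
\[ \langle R^K_{\cdot,t}\rangle_u=\frac1K\int_0^u\int_{\Co(\R_+,\R)}\Big[(\psi^t(s,y))^2\big(1+\tfrac{y_s^2}{2}+\langle H^K_s,1\rangle\big)+\sigma^2(\partial_x\psi^t(s,y))^2\Big]H^K_s(dy)\,ds, \]
which is precisely the bracket \eqref{eq:crochet} with $\psi^t$ in place of $\varphi$, now with the prefactor $1/K$ made explicit.

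\emph{From the bracket to the estimate.} By Doob's $\mathbb{L}^2$ maximal inequality, $\E(\sup_{u\le t}(R^K_{u,t})^2)\le 4\,\E(\langle R^K_{\cdot,t}\rangle_t)$, so it suffices to bound the expected bracket. Since each $g_j\in\Co^2_b$ and the Feynman--Kac weight is at most $1$, one has $\|\psi^t\|_\infty\le\prod_j\|g_j\|_\infty$; the spatial derivative $\partial_x\psi^t=\widetilde{D}\psi^t$ is likewise controlled, the strong Gaussian decay of the weight $\exp(-\int_0^{t-s}\widetilde{Y}_u^2/2\,du)$ dominating the linear growth produced by differentiation. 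Using $\langle H^K_s(dy),y_s^2\rangle=\langle Z^K_s,x^2\rangle$ and $\langle H^K_s,1\rangle=\langle Z^K_s,1\rangle$, the integrand is bounded by $C\big(1+\langle Z^K_s,1\rangle+\langle Z^K_s,1\rangle^2+\langle Z^K_s,x^2\rangle\big)$ with $C$ depending only on the $g_j$ and $\sigma$. I would then invoke Lemma \ref{lem:propagation-moment}: the uniform bounds $\sup_K\E(\sup_{s\le T}\langle Z^K_s,1\rangle^{2+\eps})<\infty$ and $\sup_K\E(\sup_{s\le T}\langle Z^K_s,x^2\rangle^{1+\eps/2})<\infty$ control each term uniformly in $K$ and $s\le T$. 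Integrating over $[0,t]\subseteq[0,T]$ gives $\E(\langle R^K_{\cdot,t}\rangle_t)\le C_T/K$, hence \eqref{estimee:R}.

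\emph{Main obstacle.} The delicate points are the two regularity facts invoked above: (i) the continuity of $\psi^t$ in $s$ across the fixed times $t_j$, needed so that the mild formula \eqref{martingale:mild} acquires no extra boundary terms; and (ii) the uniform-in-$(s,x)$ control of $\partial_x\psi^t$, where one must verify that the quadratic potential in the Feynman--Kac weight tames the derivative. These are where the genuine work lies; the moment estimates of Lemma \ref{lem:propagation-moment} then serve as a safety net, since any residual polynomial growth in $y_s$ is absorbed by the $x^2$-moment bound.
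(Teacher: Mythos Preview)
Your proposal is correct and follows essentially the same route as the paper: apply the pathwise decomposition \eqref{eq:sde} from Appendix \ref{app:sde-hist} with the time-dependent test function $\psi^t$, let the backward equation kill the $-y_s^2/2$ and generator terms, recognise that $u\mapsto R^K_{u,t}$ (for fixed $t$) is a genuine martingale, apply Doob's $\mathbb{L}^2$ inequality, and bound the expected bracket via the moment estimates of Lemma \ref{lem:propagation-moment} through the identifications $\langle H^K_s,1\rangle=\langle Z^K_s,1\rangle$ and $\langle H^K_s,y_s^2\rangle=\langle Z^K_s,x^2\rangle$. Your write-up is in fact more explicit than the paper's on two points it glosses over in a single sentence: the computation of the bracket \eqref{eq:crochet} with $\psi^t$ in place of $\varphi$, and the uniform control of $\partial_x\psi^t$ (the paper simply invokes ``$\varphi$ being bounded'').
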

Note that the process $t\mapsto R^K_{t,t}$ is not a local martingale, as seen in the proof.\\

\begin{proof}
Using Lemma \ref{lem:propagation-moment} and noticing that $\langle H^K_s,1\rangle =\langle Z^K_s,1\rangle$ and $\langle H^K_s,y_s^2\rangle =\langle Z^K_s,x^2\rangle$, we have for any $T>0$ that
\begin{equation}\label{etape19}\sup_{K\geq 1} \E\Big(\sup_{t\in [0,T]} \langle H^{K}_{t},1\rangle^{2+\varepsilon}+\langle H^K_t(dy),y_t^2\rangle^{1+\varepsilon} \Big)<+\infty.\end{equation}
Using \eqref{eq:sde} in appendix, we can write \eqref{martingale:mild}-\eqref{def:Rut}.
 The process $t\mapsto R^K_{t,t}$ is not a martingale, but the process $u\mapsto R^K_{u,t}$, defined for $u\le t$,
  is a martingale. Then we can apply Doob's inequality and write
\begin{multline*}
\mathbb{E}\big(\sup_{u\le t}(R_{u,t}^{K})^2\big)\\ \le  \frac{1}{K}\mathbb{E}\bigg(\ \int_0^t \int_{\Co(\R_+,\R)}  \bigg(\big( 1 + \frac{y_s^2}{2} + \langle H^K_s,1\rangle\big)(\psi^t)^2(s,y) +  \sigma^2 (\partial_x \psi^t(s,y) )^2\bigg)H^K_s(dy)\, ds\bigg).\end{multline*}
The function $\varphi$ defining $\psi^t$ being bounded, we can conclude with \eqref{etape19}.\end{proof}

\medskip

As in the previous section, we can freeze the nonlinearity in the competition term to $\lambda$ and couple the historical process $H^K$ with the historical process $\widetilde{H}^K$ associated with the process $\widetilde{Z}^K$ (this coupling can be done using the same  Poisson point measures, Brownian motions and initial condition as $Z^K$ and $\widetilde{Z}^K$, see Appendix \ref{app:sde-hist}). 

\medskip \begin{prop}\label{prop:coupligHHtilde}
Assume that \eqref{hyp:moment} hold and that $\,{Z}^{K}_{0}\xrightarrow[K\to\infty]{w} F\,$. Then for any  continuous and bounded function $\varphi$ of the form \eqref{test-function},
$$ \lim_{K\to+\infty}\E(\sup_{t\le T} |\langle H^{K}_{t},\varphi\rangle-\langle\widetilde{H}^{K}_{t},\varphi\rangle|^2) = 0.$$
\end{prop}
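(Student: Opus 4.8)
The plan is to subtract the mild representation \eqref{martingale:mild}, written once for $H^K$ and once for $\widetilde H^K$. The crucial structural point is that the generalized semigroup $\psi^t$ is built only from the killed, drifted diffusion $\widetilde Y$ and does not see the competition term, so \emph{the same} $\psi^t$ serves both processes; only the prefactor of the source term changes from $1-\langle H^K_s,1\rangle$ to $1-\lambda$. Since the coupling starts both processes from the common initial condition $H^K_0=\widetilde H^K_0$, the transported terms $\langle H^K_0,\psi^t(0,\cdot)\rangle$ cancel, and using $\langle H^K_s,1\rangle=\langle Z^K_s,1\rangle$ together with $\langle\widetilde H^K_s,1\rangle=\langle\widetilde Z^K_s,1\rangle$ I obtain
\begin{align*}
\langle H^K_t-\widetilde H^K_t,\varphi\rangle
&=\int_0^t\big(1-\langle Z^K_s,1\rangle\big)\,\langle H^K_s-\widetilde H^K_s,\psi^t(s,\cdot)\rangle\,ds \\
&\quad+\int_0^t\big(\lambda-\langle Z^K_s,1\rangle\big)\,\langle \widetilde H^K_s,\psi^t(s,\cdot)\rangle\,ds
+\big(R^K_{t,t}-\widetilde R^K_{t,t}\big).
\end{align*}
Call these three contributions (i), (ii) and (iii).

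For (iii), the estimate \eqref{estimee:R} gives $\E(\sup_{u\le t}(R^K_{u,t})^2)\le C_T/K$, and the very same computation applied to $\widetilde H^K$ (its quadratic variation obeys the analogue of \eqref{estimee:R}, since that bound only uses the boundedness of $\psi^t$ and $\partial_x\psi^t$ together with the moment controls of Lemma \ref{lem:propagation-moment}) yields $\E(\sup_{u\le t}(\widetilde R^K_{u,t})^2)\le C_T/K$. Hence (iii) is $O(1/K)$ in $L^2$ for each fixed terminal time $t$. The only subtlety is that the statement asks for a supremum over the \emph{terminal} time $t$: here I would use that, for test functions of the form \eqref{test-function}, the map $t\mapsto\psi^t$ is piecewise defined and stabilizes once $t\ge t_m$, so the diagonal $t\mapsto R^K_{t,t}$ reduces to finitely many pieces, each a martingale in its upper endpoint, to which a Doob inequality uniform in the terminal time upgrades \eqref{estimee:R} into $\E(\sup_{t\le T}(R^K_{t,t})^2)\le C_T/K$.

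For (ii), I bound $|\psi^t(s,\cdot)|\le\prod_j\|g_j\|_\infty=:C_\varphi$, so that $|\langle\widetilde H^K_s,\psi^t(s,\cdot)\rangle|\le C_\varphi\langle\widetilde Z^K_s,1\rangle$ and hence $\sup_{t\le T}|\mathrm{(ii)}|\le C_\varphi\int_0^T|\lambda-\langle Z^K_s,1\rangle|\,\langle\widetilde Z^K_s,1\rangle\,ds$. Corollary \ref{initialisation-F} gives $\langle Z^K_s,1\rangle\to\lambda$ in $L^2$ uniformly on $[0,T]$, while \eqref{CV-L2tilde} and Lemma \ref{lem:propagation-moment} control $\langle\widetilde Z^K_s,1\rangle$ in $L^{2+\epsilon}$; writing $\langle\widetilde Z^K_s,1\rangle\le\lambda+|\langle\widetilde Z^K_s,1\rangle-\lambda|$ and combining Cauchy--Schwarz with these two convergences (the $2+\epsilon$ moments providing the uniform integrability needed for the interpolation) shows $\E(\sup_{t\le T}|\mathrm{(ii)}|^2)\to0$.

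The heart of the proof is (i), which is self-referential and must be closed by a Grönwall argument, exactly in the spirit of Step 1 of the proof of Theorem \ref{thm-CV}. Using again $\|\psi^t(s,\cdot)\|_\infty\le C_\varphi$, I control the measure difference by the total-variation-type distance
$$\mathcal D_s^K:=\sup\big\{|\langle H^K_s-\widetilde H^K_s,\phi\rangle|:\ \|\phi\|_\infty\le1\big\},$$
the supremum ranging over path functionals $\phi$ of the form \eqref{test-function}, which gives $|\mathrm{(i)}|\le C_\varphi\int_0^t(1+\langle Z^K_s,1\rangle)\,\mathcal D_s^K\,ds$. Taking the estimates on (ii) and (iii) uniformly over $\|\varphi\|_\infty\le1$ and feeding them into this inequality, then handling the random coefficient $1+\langle Z^K_s,1\rangle$ via a stopping/truncation argument (so that a deterministic Grönwall lemma applies on the events where the total mass stays bounded, the complementary events being negligible thanks to the uniform $L^{2+\epsilon}$ bound on $\sup_{s\le T}\langle Z^K_s,1\rangle$), I conclude $\E(\sup_{t\le T}(\mathcal D_t^K)^2)\to0$, which in particular yields the claim for the fixed $\varphi$. \emph{The main obstacle} is precisely this step: the integrand of (i) tests $H^K_s-\widetilde H^K_s$ against the moving family $\psi^t(s,\cdot)$ rather than against the fixed $\varphi$, so the Grönwall loop only closes in the stronger uniform norm $\mathcal D^K$, and one must simultaneously keep the derivative-dependent remainder \eqref{estimee:R} uniform over the terminal time and tame the unbounded random prefactor $1-\langle Z^K_s,1\rangle$.
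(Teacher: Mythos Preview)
Your proposal is correct and follows essentially the same route as the paper: subtract the mild representations \eqref{martingale:mild} for $H^K$ and $\widetilde H^K$, split into the Gr\"onwall term, the $(\lambda-\langle Z^K_s,1\rangle)$ source term, and the two remainders $R^K$, $\widetilde R^K$; control the random prefactor $1-\langle Z^K_s,1\rangle$ by a stopping-time truncation on the masses; close the loop in the total-variation-type norm $\mathcal D^K_s$ by taking the supremum over cylindrical $\varphi$ with $\|\varphi\|_\infty\le1$; and conclude via Gr\"onwall together with Corollary~\ref{initialisation-F} and \eqref{estimee:R}. The paper does exactly this (working in $L^1$ and relying implicitly on the uniform $L^{2+\epsilon}$ moments to pass to the $L^2$ statement), and you are in fact more explicit than the paper about the one genuinely delicate point, namely the control of the diagonal $t\mapsto R^K_{t,t}$ uniformly in the terminal time---the paper uses \eqref{estimee:R} at each fixed $t$ and passes under the $\sup_{s\le t}$ somewhat tacitly.
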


\begin{proof}
Using Appendix \ref{app:sde-hist}, we have for $\widetilde{H}^K$ a similar  decomposition as \eqref{martingale:mild} with $\langle H^{K}_{t},1\rangle$ replaced by $\lambda$. We now use the mild equation \eqref{martingale:mild} for $H^K$ and the analogous equation for $\widetilde{H}^K$, involving a term $\widetilde{R}^K_{t,t}$ similar to \eqref{def:Rut} but with again $\langle H^{K}_{t},1\rangle$ replaced by $\lambda$. Recall that both processes are built on the same probability space with the same initial condition. For $M>0$, let us introduce the stopping times:
\begin{align}
& \tau^K_M=\inf\big\{t\in \R_+,\ \langle H^K_t,1\rangle =\langle Z^K_t,1\rangle > M \big\},\nonumber\\
\mbox{ and } &
\widetilde{\tau}^K_M=\inf\big\{t\in \R_+,\ \langle \widetilde{H}^K_t,1\rangle =\langle \widetilde{Z}^K_t,1\rangle > M\big\}.\end{align}
Let $\varepsilon>0$ be fixed. Because the processes $(\langle Z^K_t,1\rangle)_{t\in \R_+}$ and $(\langle \widetilde{Z}^K_t,1\rangle)_{t\in \R_+}$ converge to deterministic continuous and bounded processes, there exists $M=M(\varepsilon)$, independent from $K$, such that
\[\P(\tau^K_M \wedge \widetilde{\tau}^K_M\leq T)<\varepsilon.\]
Then, using \eqref{martingale:mild} for a bounded cylindrical test-function $\psi^t$ as in \eqref{test-function}:

\begin{multline*}
\big|\langle H^{K}_{t\wedge \tau^K_M\wedge \widetilde{\tau}^K_M},\varphi\rangle-\langle\widetilde{H}^{K}_{t\wedge \tau^K_M\wedge \widetilde{\tau}^K_M},\varphi \rangle \big|  \\
\begin{aligned}
 \leq &  \Big|\int_0^{t\wedge \tau^K_M\wedge \widetilde{\tau}^K_M} \int_{\Co(\R_+,\R)}  \big( 1-  \langle H^K_s,1\rangle\big)\psi^t(s,y)\ H^K_s(dy)\, ds   \\
& -  \int_0^{t\wedge \tau^K_M\wedge \widetilde{\tau}^K_M} \int_{\Co(\R_+,\R)}  \big( 1- \lambda\big)\psi^t(s,y)\widetilde{H}^K_s(dy)\, ds    \Big| + \big| R_{t\wedge \tau^K_M\wedge \widetilde{\tau}^K_M,t}^{K} \big| + \big|\widetilde{R}_{t\wedge \tau^K_M\wedge \widetilde{\tau}^K_M,t}^K\big|\\
\leq & |1-\lambda| \int_0^t \sup_{u\leq s\wedge \tau^K_M\wedge \widetilde{\tau}^K_M} \big|\langle H^K_{u}(dy) - \widetilde{H}^K_{u}(dy),\psi^t(u,y)\rangle  \big| \ ds \\
  & +  \int_0^{t\wedge \tau^K_M\wedge \widetilde{\tau}^K_M} \big|\langle Z^K_s,1\rangle- \lambda \big|\times \big|\langle H^K_s(dy),\psi^t(s,y)\rangle\big| \ ds   + \sup_{s\leq t} \big| R_{s\wedge \tau^K_M\wedge \widetilde{\tau}^K_M,t}^{K} \big| + \sup_{s\leq t}\big|\widetilde{R}_{s\wedge \tau^K_M\wedge \widetilde{\tau}^K_M,t}^K\big|\\
\leq & |1-\lambda| \|\varphi\|_\infty \int_0^t \sup_{u\leq s} \|  H^K_{u\wedge \tau^K_M \wedge \widetilde{\tau}^K_M} - \widetilde{H}^K_{u\wedge \tau^K_M \wedge \widetilde{\tau}^K_M}\|_{TV} \ ds \\
  & +  T\|\varphi\|_\infty M \ \sup_{s\leq T} \big|\langle Z^K_s,1\rangle- \lambda \big|    + \sup_{s\leq t} \big| R_{s\wedge \tau^K_M\wedge \widetilde{\tau}^K_M,t}^{K} \big| + \sup_{s\leq t}\big|\widetilde{R}_{s\wedge \tau^K_M\wedge \widetilde{\tau}^K_M,t}^K\big|,
\end{aligned}
\end{multline*}where $\|.\|_{TV}$ denotes the norm in total variation. Taking the supremum with respect to $\varphi$ of the form \eqref{test-function} and with norm $\|\varphi\|_\infty\leq 1$ in the left hand side, and then taking the expectation, we have by \eqref{estimee:R}:
\begin{multline*}
\E\big(\sup_{s\leq t} \|H^K_{s\wedge \tau^K_M\wedge \widetilde{\tau}^K_M}-\widetilde{H}^K_{s\wedge \tau^K_M\wedge \widetilde{\tau}^K_M} \|_{TV}\big)\leq  |1-\lambda| \int_0^t \E\Big(\sup_{u\leq s} \|H^K_{u\wedge \tau^K_M \wedge \widetilde{\tau}^K_M}- \widetilde{H}^K_{u\wedge \tau^K_M \wedge \widetilde{\tau}^K_M}\|_{TV} \Big) ds \\
 + TM \ \E\big(\sup_{s\leq T} \big|\langle Z^K_s,1\rangle- \lambda \big|\big)+ 2 \sqrt{\frac{C_T }{K}} .
\end{multline*}
Using Gronwall's lemma:
\begin{equation*}
\E\big(\sup_{s\leq t} \|H^K_{s\wedge \tau^K_M\wedge \widetilde{\tau}^K_M}-\widetilde{H}^K_{s\wedge \tau^K_M\wedge \widetilde{\tau}^K_M} \|_{TV}\big)\leq  \Big(TM\  \E\big(\sup_{s\leq T} \big|\langle Z^K_s,1\rangle- \lambda \big|\big)+ 2 \sqrt{\frac{C_T }{K}} \Big) e^{|1-\lambda| T}.
\end{equation*}Then, note that
\begin{multline}
\mathbb{E}\big(\sup_{t\leq T}\| H^{K}_{t}-\widetilde{H}^{K}_{t} \|_{TV}\big)\leq
\mathbb{E}\big(\sup_{t\leq T}\| H^{K}_{t\wedge \tau^K_M\wedge \widetilde{\tau}^K_M}-\widetilde{H}^{K}_{t\wedge \tau^K_M\wedge \widetilde{\tau}^K_M} \|_{TV}\big) \\
\begin{aligned}
 & + \sqrt{\P(\tau^K_M \wedge \widetilde{\tau}^K_M\leq T)} \sqrt{2  \E\big(\sup_{t\leq T}\langle Z^K_t,1\rangle^2 + \langle \widetilde{Z}^K_t,1\rangle^2\big)}\\
\leq & \Big(TM\  \E\big(\sup_{t\leq T} \big|\langle Z^K_t,1\rangle- \lambda \big|\big)+ 2 \sqrt{\frac{C_T }{K}} \Big) e^{|1-\lambda| T}+\sqrt{\varepsilon}  \sqrt{2 \sup_{K\geq 1}\E\big(\sup_{t\leq T}\langle Z^K_t,1\rangle^2 + \langle \widetilde{Z}^K_t,1\rangle^2\big)},
\end{aligned}\end{multline}for $M>M_\varepsilon$. Then, choosing $K$ sufficiently large, the first term in the right hand side is upper bounded by a constant times $\varepsilon$, by \eqref{CV-L2tilde}. This, with \eqref{eq:propagation}, concludes the proof.
\end{proof}

\subsection{Feynman-Kac approach for the law of the branching-diffusion  process}

As the process   $(\widetilde{Z}^{K}_{t},\ t\le T)$ is a branching process without interaction,  the genealogies started from the  initial individuals evolve independently from each other, with the same law. It follows that
\[
\mathbb{E}\left[\langle \widetilde{Z}_{t}^{K},\varphi\rangle\right]=\int_{\mathbb{R}} \mathbb{E}_{\delta_x}\left[\langle \widetilde{Z}_{t},\varphi\rangle \right] {Z}^{K}_{0}(dx).
\]
where $\widetilde{Z}$ is a branching process satisfying Equation \eqref{ref:partSys} started from $\widetilde{Z}_{0}=\delta_{x}$.
 For the reasons mentioned above, we consider from this point a particle system starting from a single particle with trait $x$.

\medskip \noindent  The formulas/theory used below come from \cite{kurtz1997conceptual,lyons1995conceptual} further developed for instance in \cite{bansaye_limit_2011,cloez,hardyharris3,Marguet2}. Here we give original and  simpler proofs.

\begin{lem}
	\label{lem:MTO}
	Let $\varphi$ in $C_{b}(\mathbb{R})$. Then, for any positive time $t$, for any $x\in \mathbb{R}$, we have
\begin{equation}
\label{eq:MTO}
\mathbb{E}_{\delta_{x}}\left[\langle \widetilde{Z}_{t}, \varphi\rangle \right]=\mathbb{E}_{x}\left[\exp\left(\int_{0}^{t} \left(1-\frac{1}{2}X_{s}^{2}-\lambda \right)ds \right)\varphi(X_{t}) \right],
\end{equation}
where $X$ is the drifted Brownian motion
\begin{equation}\label{def:Y} dX_{t}= \sigma(dB_{t}-c\ dt).\end{equation}

\end{lem}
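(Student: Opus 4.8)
The plan is to establish the identity by showing that both sides, as functions of $t$, solve the same linear parabolic problem, and then invoke uniqueness. Write $u(t,x) = \mathbb{E}_{\delta_x}[\langle \widetilde{Z}_t, \varphi\rangle]$ for the left-hand side and $v(t,x) = \mathbb{E}_x\big[\exp(\int_0^t (1-\tfrac12 X_s^2 - \lambda)\,ds)\,\varphi(X_t)\big]$ for the right-hand side, where $X$ is the drifted Brownian motion from \eqref{def:Y}. The first step is to derive the evolution equation for $u$. Taking $\varphi \in C^2_b(\mathbb{R})$ and applying the expectation to the semi-martingale decomposition \eqref{ref:partSys} for the branching process $\widetilde Z$ started from $\delta_x$, the martingale term vanishes in expectation and one obtains, by differentiating in $t$, that $u$ solves
\begin{equation*}
\partial_t u(t,x) = \frac{\sigma^2}{2}\partial^2_{xx} u(t,x) - \sigma c\, \partial_x u(t,x) + \Big(1 - \frac{x^2}{2} - \lambda\Big) u(t,x),
\end{equation*}
with $u(0,x) = \varphi(x)$. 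This is exactly equation \eqref{eq:m-intro} (with $\|F\|_1 = \lambda$) but with general bounded initial data $\varphi$ rather than the constant $1$.

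The second step is to recognize $v$ as the Feynman--Kac solution of the very same equation. Since $X$ has generator $\mathcal{L} = \tfrac{\sigma^2}{2}\partial^2_{xx} - \sigma c\,\partial_x$ and the potential is $k(x) = 1 - \tfrac{x^2}{2} - \lambda$, the classical Feynman--Kac theorem gives that $v(t,x) = \mathbb{E}_x[e^{\int_0^t k(X_s)\,ds}\varphi(X_t)]$ solves $\partial_t v = \mathcal{L} v + k\, v$ with $v(0,\cdot) = \varphi$. The rigorous justification is by Itô's formula applied to $s \mapsto e^{\int_0^s k(X_r)\,dr}\, w(t-s, X_s)$ for a smooth solution $w$, whose drift part cancels precisely because $w$ satisfies the PDE, leaving a martingale; taking expectations at $s=0$ and $s=t$ yields the identity. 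Having both $u$ and $v$ solve the same linear Cauchy problem with the same initial datum, the final step is a uniqueness argument identical in spirit to Step~1 of the proof of Theorem \ref{thm-CV}: one tests against the adjoint semigroup and closes via Gronwall, giving $u \equiv v$ for $\varphi \in C^2_b$, and the result extends to $\varphi \in C_b$ by a density/approximation argument using dominated convergence on both sides.

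The main obstacle is the unboundedness of the potential $k(x) = 1 - \tfrac12 x^2 - \lambda$, which is not bounded below and tends to $-\infty$ quadratically. This is what makes both the finiteness of $v$ and the integrability needed to kill the Itô martingale term nontrivial: one must check that $\mathbb{E}_x[e^{\int_0^t k(X_s)\,ds}]$ is finite and that the local martingale arising from Itô's formula is a genuine martingale rather than merely a local one. The favorable sign here is decisive — because $-\tfrac12 x^2$ drives the exponential down, the relevant Girsanov-type exponential weight is in fact integrable (indeed this is precisely the kind of quadratic additive functional of a drifted Brownian motion that the paper evaluates explicitly later via the arguments of \cite{fitzsimmons1993markovian}), so a localization argument using the moment bounds already available, together with the Gaussian tails of $X_t$, controls the martingale and secures uniform integrability when passing to the limit. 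I would handle this by localizing with stopping times $T_n = \inf\{s : |X_s| > n\}$, applying the bounded version of the argument on $[0, t\wedge T_n]$, and letting $n \to \infty$ with the aid of the one-sided bound $k(x) \le 1 - \lambda$ on the upper side and the explicit integrability of the quadratic functional on the lower side.
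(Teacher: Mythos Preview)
Your proposal is correct and follows essentially the same approach as the paper: both sides are shown to be weak solutions of the linear Cauchy problem \eqref{eq:prob2}, the left-hand side by taking expectations in \eqref{ref:partSys} and the right-hand side via It\^o's formula applied to $\exp\big(\int_0^t k(X_s)\,ds\big)\varphi(X_t)$, after which uniqueness (from Theorem~\ref{thm-CV}) closes the argument. The paper carries this out at the level of measures $\nu_t$ and $\mu_t$ rather than functions $u(t,x)$, $v(t,x)$, and applies It\^o directly to the exponential functional rather than to $e^{\int_0^s k}\,w(t-s,X_s)$, but these are presentational differences only; your explicit discussion of the localization needed to handle the unbounded potential is a point the paper's proof passes over in silence.
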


\begin{proof}
Let us give a very simple proof based on It\^o's formula.

\medskip \noindent  Let us first note that the measure $\,
	\nu_{t}(dy)=\mathbb{E}_{\delta_{x}}\Big(\widetilde{Z}_{t}(dy)\Big)\,$ defined for any $\varphi$ in $C_{b}(\mathbb{R})$ by
	
\[
	\langle\nu_{t}, \varphi \rangle =\mathbb{E}_{\delta_{x}}\left[\langle \widetilde{Z}_{t}, \varphi\rangle \right]
	\]
is  the unique weak solution of
\begin{equation}
	\label{eq:prob2}
	\begin{cases}
	\partial_{t}\nu_{t}=\Delta \nu_{t} + \sigma c \partial_{x}\nu_{t}+(1-\frac{x^2}{2}-\lambda)\nu_{t}\\
	\nu_{0}=\delta_{x}
	\end{cases}.
	\end{equation}
	
	Indeed, it is enough to take expectation in \eqref{ref:partSys}. Uniqueness of such a solution  has been proved in Theorem \ref{thm-CV}.
	
\medskip \noindent Let us now show that the right hand side term of \eqref{eq:MTO} also  satisfies \eqref{eq:prob2}. Uniqueness will yield the result.

Let $\varphi$ in $C^2_{b}(\mathbb{R})$ and apply It\^o's formula to the semimartingale
$$\exp\left(\int_{0}^{t} \left(1-\frac{1}{2}X_{s}^{2}-\lambda \right)ds \right)\varphi(X_{t}) .$$
We have
\begin{multline*}\exp\left(\int_{0}^{t} \left(1-\frac{1}{2}X_{s}^{2}-\lambda \right)ds \right)\varphi(X_{t})= \varphi(X_{0}) + \int_{0}^t \exp\left(\int_{0}^{s} \left(1-\frac{1}{2}X_{u}^{2}-\lambda \right)du \right)\,\sigma\varphi'(X_{s}) dB_{s} \\
+  \int_{0}^t \exp\left(\int_{0}^{s} \left(1-\frac{1}{2}X_{u}^{2}-\lambda \right)du \right) \,\bigg\{\left(1-\frac{1}{2}X_{s}^{2}-\lambda \right)\,\varphi(X_{s}) + {\sigma^2\over 2}\,\varphi''(X_{s})- c \sigma\,\varphi'(X_{s})\bigg\}ds.
\end{multline*}
Taking the expectation, we obtain that
\begin{multline}
\mathbb{E}_{x}\left[\exp\left(\int_{0}^{t} \left(1-\frac{1}{2} X_{s}^{2}-\lambda \right)ds \right)\varphi(X_{t}) \right] =\varphi(x)
 +  \mathbb{E}_{x}\bigg[\int_{0}^t \exp\left(\int_{0}^{s} \left(1-\frac{1}{2}X_{u}^{2}-\lambda \right)du \right) \\
 \times \bigg\{\left(1-\frac{1}{2}X_{s}^{2}-\lambda \right)\,\varphi(X_{s}) + {\sigma^2\over 2}\,\varphi''(X_{s})- c \sigma\,\varphi'(X_{s})\bigg\}ds\bigg]. \label{ito}\end{multline}

If we define the measure $\mu_{t}$ for any test function $\varphi\in \Co^2_b(\R)$ by
$$\langle \mu_{t}, \varphi\rangle = \mathbb{E}_{x}\left[\exp\left(\int_{0}^{t} \left(1-\frac{1}{2}X_{s}^{2}-\lambda \right)ds \right)\varphi(X_{t}) \right] ,$$
we obtain from \eqref{ito}  that
$$\langle \mu_{t}, \varphi\rangle = \langle \delta_{x}, \varphi\rangle +\int_{0}^t \Big\langle \mu_{s}, (1-{x^2\over 2} -\lambda)\varphi(x) + {\sigma^2\over 2}\,\varphi''(x)- c \sigma\,\varphi'(x)\Big\rangle ds.$$
This proves that the flow $(\mu_{t},\ t\ge 0)$ is a weak solution of \eqref{eq:prob2} and the conclusion follows by uniqueness.
\end{proof}

\begin{cor}
\label{avril}
Let us define  for any $t\ge 0$ and $x\in \mathbb{R}$ the expectation of the number of individuals at time $t$ in the branching process $\widetilde{Z}_t$ started from one individual wih trait $x$,
\begin{equation}
 \label{m}
  m_{t}(x) = \E_{\delta_{x}} (\langle \widetilde{Z}_{t}, 1\rangle).\end{equation}
Then we have
\begin{equation}
\label{tenan}
m_{t}(x)  = \mathbb{E}_{x}\left[\exp\left(\int_{0}^{t} \left(1-\frac{1}{2}X_{s}^{2}-\lambda \right)ds \right) \right].
\end{equation}
We deduce that the function $(t,x)\mapsto m_t(x)$ belongs to $C_{b}^{1,\infty}([0,T]\times \mathbb{R})$.
\end{cor}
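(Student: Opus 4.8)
The plan is to prove the two assertions in Corollary \ref{avril} in sequence: first the Feynman-Kac representation \eqref{tenan}, then the regularity claim $m\in C_b^{1,\infty}([0,T]\times\R)$.

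The first formula is essentially immediate from Lemma \ref{lem:MTO}. Indeed, $m_t(x)=\E_{\delta_x}(\langle\widetilde Z_t,1\rangle)$ is precisely the quantity $\mathbb{E}_{\delta_x}[\langle\widetilde Z_t,\varphi\rangle]$ in \eqref{eq:MTO} evaluated at the constant test function $\varphi\equiv 1$. Plugging $\varphi(X_t)=1$ into the right-hand side of \eqref{eq:MTO} yields \eqref{tenan} directly. The only point requiring a word of care is that Lemma \ref{lem:MTO} was stated for $\varphi\in C_b(\R)$ (and the Itô computation in its proof used $\varphi\in C_b^2(\R)$), so I would note that $\varphi\equiv 1$ is admissible, being smooth and bounded with vanishing derivatives; alternatively one approximates the constant function by a suitable sequence of $C_b^2$ functions and passes to the limit using dominated convergence, the integrand being controlled by $\exp(\int_0^t(1-\lambda)\,ds)=e^{(1-\lambda)t}$ since $-X_s^2/2\le 0$.

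For the regularity, the strategy is to read off smoothness from the explicit probabilistic representation \eqref{tenan}. Writing $X_s=x+\sigma(B_s-cs)$, I would substitute this into the exponential and expand $-\tfrac12 X_s^2=-\tfrac12\big(x+\sigma(B_s-cs)\big)^2$, separating the dependence on the initial condition $x$. The functional $\int_0^t X_s^2\,ds$ is a quadratic form in $x$ with random coefficients, so the integrand becomes $\exp\big(-a(t)x^2/2 - x\,\beta_t - \gamma_t\big)$ up to the deterministic factor, where $a(t)=\int_0^t ds=t$ and $\beta_t,\gamma_t$ are (affine and quadratic, respectively) functionals of the Brownian path. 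Differentiating under the expectation sign in $x$ any number of times brings down polynomial-in-$(x,\beta_t)$ prefactors multiplying the same exponential; since the exponential is bounded by $1$ (the quadratic in $X_s$ has nonpositive contribution) and the prefactors have finite moments of every order (Gaussian tails of $\beta_t$), dominated convergence justifies differentiation of all orders in $x$, giving $C^\infty$ and boundedness in $x$ uniformly on $[0,T]$. For the single time-derivative, I would instead invoke that $m$ solves \eqref{eq:m-intro} (equivalently that $\nu_t(\R)=m_t(x)$ solves \eqref{eq:prob2} tested against $1$): since $m$ is $C^\infty$ in $x$ with bounded spatial derivatives and the coefficients of the PDE are smooth, $\partial_t m=\tfrac{\sigma^2}{2}\partial_{xx}m-\sigma c\,\partial_x m+(1-\tfrac{x^2}{2}-\lambda)m$ expresses $\partial_t m$ in terms of bounded-in-$x$ quantities, yielding $C^{1}$ in $t$ and the claimed joint regularity.

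The main obstacle is the control of the $x^2$-growth of the potential when establishing boundedness of the spatial derivatives: naively the factor $(1-\tfrac{x^2}{2}-\lambda)$ in the PDE, and the quadratic $\int_0^t X_s^2\,ds$ in the exponent, look like they might destroy uniform bounds. The key observation that resolves this is that the quadratic functional enters with a negative sign, so $\exp(-\tfrac12\int_0^t X_s^2\,ds)\le 1$ decays in $|x|$ and in fact suppresses the polynomial prefactors produced by differentiation; thus each spatial derivative $\partial_x^k m_t(x)$ is bounded uniformly in $(t,x)\in[0,T]\times\R$, and not merely locally. Verifying the requisite integrability of the Brownian functionals $\beta_t,\gamma_t$ — that is, checking one may differentiate under the expectation — is the only genuinely technical step, and it is routine once the Gaussian structure is made explicit.
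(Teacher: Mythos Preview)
Your proposal is correct and follows essentially the same approach as the paper. For \eqref{tenan} the paper simply says ``apply Lemma \ref{lem:MTO} to $\varphi=1$'', and for the regularity it rewrites $m_t(x)=\mathbb{E}_0\big[\exp\big(\int_0^t(1-\tfrac12(x+X_s)^2-\lambda)\,ds\big)\big]$ (so that $x$ enters only as a parameter) and invokes Lebesgue's theorem---exactly your substitution $X_s=x+\sigma(B_s-cs)$ followed by differentiation under the expectation; your additional step of reading off $\partial_t m$ from the PDE \eqref{m-edp} is a reasonable embellishment that the paper leaves implicit.
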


\medskip \begin{proof}
Equation \eqref{tenan} is obvious  by  applying Lemma \ref{lem:MTO} to $\varphi=1$.

Since the process $X$ is a drifted Brownian motion,  we can write
$$m_{t}(x) = \mathbb{E}_{0}\left[\exp\left(\int_{0}^{t} \left(1-\frac{1}{2}(x+X_{s})^{2}-\lambda \right)ds \right) \right].$$
Lebesgue's Theorem allows us to conclude.
\end{proof}

\noindent
{\bf Remark:} \emph{From   \eqref{tenan} and Feynman-Kac formula (see \cite{feynman_space-time_1948,kac_distributions_1949,kac_connections_1951}), we deduce that  the function $(m_{t}(x), x\in \mathbb{R}, t\ge 0)$ is the unique strong solution of
\begin{eqnarray}
\label{m-edp}
\begin{cases}
\partial_{t} m=\frac{\sigma^2}{2}\partial_{xx}m-\sigma c\partial_{x}m+(1-\frac{x^2}{2}-\lambda)m\\
m_{0}(x)=1.
\end{cases}
\end{eqnarray}
}

\emph{Let us also note that  \eqref{m-edp} and the stationarity of $F$ (see Eq.\ \eqref{Stationary}) imply  that $t\to \int m_{t}(x) F(x) dx$ is constant and then
\be
\label{mF}
\int_{\mathbb{R}} m_{t}(x) F(x) dx = \int_{\mathbb{R}} F(x) dx = \lambda.
\ee}

\medskip Our aim is now to generalize \eqref{eq:MTO} to trajectories.  In what follows, for a time $T$, we label individuals by $i\in \widetilde{V}_T$ where $\widetilde{V}_{T}$ denotes the set of individuals alive at time $T$ and started from one individual with trait $x$ at time $0$. For a time $t<T$, we will introduce the  notation $\widetilde{X}^i(t)$ to denote the historical lineage of  the individual $i\in \widetilde{V}_{T}$ at time $t$.

\begin{lem}
\label{lem:MTO05}
	Let $\varphi$ in $C_{b}(\mathbb{R})$. Then, for any positive times $t$ and $T$ such that $t\leq T$, for any $x\in \mathbb{R}$, we have
\begin{equation}
	\label{eq:MTO05}
	\mathbb{E}_{\delta_{x}}\left[\sum_{i\in \widetilde{V}_{T}} \varphi(\widetilde{X}^{i}_{t}) \right]=\mathbb{E}_{x}\left[\exp\left(\int_{0}^{T} \left(1-\frac{1}{2}X_{s}^{2}-\lambda \right)ds \right)\varphi(X_{t}) \right],
\end{equation}
where $X$ is the drifted Brownian defined in Lemma \ref{lem:MTO}.
\end{lem}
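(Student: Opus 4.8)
The plan is to extend Lemma \ref{lem:MTO} from the trait distribution at a single time to the full historical lineage, and the key observation is that Lemma \ref{lem:MTO} already handles the single-time marginal. The left-hand side of \eqref{eq:MTO05} is the expectation of an additive functional over the population alive at the terminal time $T$, but evaluated along the lineage at the intermediate time $t\le T$. First I would set up the proof exactly as in Lemma \ref{lem:MTO}: define a measure $\nu^{t}_{T}(dy)$ on $\R$ by $\langle \nu^{t}_{T},\varphi\rangle = \mathbb{E}_{\delta_x}\big[\sum_{i\in\widetilde V_T}\varphi(\widetilde X^i_t)\big]$, the expected occupation measure at time $t$ of the lineages of individuals surviving to $T$. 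The natural route is to identify this object as the solution of an evolution equation in the \emph{terminal} time variable $T$ (with $t$ and $\varphi$ frozen) and to show that the right-hand side of \eqref{eq:MTO05} solves the same equation with the same initial datum at $T=t$, then invoke uniqueness.

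\medskip\noindent
Concretely, I would run the argument in two time regimes. For $T=t$ the statement is exactly Lemma \ref{lem:MTO}, since $\widetilde X^i_t = \widetilde X^i_T$ and the sum reduces to $\langle\widetilde Z_t,\varphi\rangle$; this is the initial condition. For $T>t$, the branching structure of $\widetilde Z$ (valid because the nonlinearity has been frozen to $\lambda$, by Proposition \ref{proximity} and the discussion opening this subsection) lets me propagate. The clean way is to use the many-to-one/Feynman--Kac machinery already established: by the branching property the expected sum over $\widetilde V_T$ of any functional of the lineage is computed by following a single spinal particle, which between branching events diffuses as the drifted Brownian motion $X$ of \eqref{def:Y} and carries the multiplicative weight $\exp\big(\int_0^T(1-\tfrac12 X_s^2-\lambda)\,ds\big)$ accounting for the net growth (birth rate $1$ minus death rate $x^2/2+\lambda$). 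Since the weight depends only on the \emph{full} trajectory up to $T$, while the test function $\varphi$ samples the position $X_t$ at the fixed earlier time $t$, the identity \eqref{eq:MTO05} drops out. The cleanest self-contained derivation mirrors the Itô computation in Lemma \ref{lem:MTO}: apply Itô's formula to $\exp\big(\int_0^s(1-\tfrac12 X_u^2-\lambda)du\big)\,\Phi(s,X_{s\wedge t})$ for a suitable space-time test function $\Phi$ that freezes the argument after time $t$, take expectations to kill the martingale part, and match with the generator decomposition \eqref{martingalecasdawson} (with the competition term $\langle H^K_s,1\rangle$ replaced by $\lambda$) for the frozen historical process $\widetilde H^K$.

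\medskip\noindent
The main obstacle is bookkeeping the freezing of the lineage at time $t$: the functional $y\mapsto\varphi(y_t)$ is not of the smooth product form \eqref{test-function} in a way that makes the space-time generator act transparently, because for $s>t$ the position $X_t$ is already determined and the diffusion should no longer contribute. I would handle this exactly via the stopped process $X_{s\wedge t}$, so that for $s>t$ the drift and diffusion terms in Itô's formula vanish and only the multiplicative weight keeps accumulating; the test function then factors as $\varphi(X_{s\wedge t})$ and the remaining dynamics on $[t,T]$ only multiply by the growth functional, reproducing the full integral $\int_0^T$ in the exponent while sampling $\varphi$ at $X_t$. Once this is set up, uniqueness of the weak solution — available from Theorem \ref{thm-CV} for the frozen (linear) equation \eqref{moving-lin}, applied to the marginal at time $t$ propagated to $T$ — closes the argument just as in the proof of Lemma \ref{lem:MTO}. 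I expect no genuine analytic difficulty beyond this stopping-time bookkeeping, since all the ingredients (branching property, the explicit Feynman--Kac weight, and uniqueness) are already in place.
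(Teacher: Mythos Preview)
Your outline is workable in principle, but it takes a considerably longer road than necessary, and one of your shortcuts is circular. When you say ``by the branching property the expected sum over $\widetilde V_T$ of any functional of the lineage is computed by following a single spinal particle carrying the multiplicative weight $\exp(\int_0^T\dots)$'', you are invoking exactly the many-to-one identity for path functionals that this lemma (together with Proposition~\ref{lem:MTO06}) is meant to establish. That sentence cannot be used as a step in the proof.

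The paper's argument avoids your evolution-in-$T$ machinery and the stopped-process It\^o computation entirely. It uses a single conditioning step: for each $i\in\widetilde V_T$ there is a unique ancestor $j\in\widetilde V_t$, so
\[
\sum_{i\in\widetilde V_T}\varphi(\widetilde X^i_t)=\sum_{j\in\widetilde V_t}\varphi(\widetilde X^j_t)\,\widetilde N(j)_{t,T},
\]
where $\widetilde N(j)_{t,T}$ counts the time-$T$ descendants of $j$. Taking $\E[\cdot\mid\mathcal F_t]$ and using the branching property gives $\E[\widetilde N(j)_{t,T}\mid\mathcal F_t]=m_{T-t}(\widetilde X^j_t)$, hence the conditional expectation equals $\langle\widetilde Z_t,\varphi\,m_{T-t}\rangle$. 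Now apply Lemma~\ref{lem:MTO} to the test function $\varphi\,m_{T-t}$, and finish with the Markov property for $X$ together with the representation \eqref{tenan} of $m_{T-t}$ to rebuild the full exponential $\exp\big(\int_0^T(1-\tfrac12 X_s^2-\lambda)\,ds\big)$. No new uniqueness argument, no stopped-process bookkeeping, no evolution equation in $T$ is needed.

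Your It\^o-with-stopping route would ultimately reproduce this, but only after you set up and justify a semimartingale decomposition for the single-ancestor historical process (not $\widetilde H^K$) tested against $\varphi(y_t)$, split the time interval at $t$, and then invoke a uniqueness statement that you would have to extract from the proof of Theorem~\ref{thm-CV} rather than from the theorem itself. That is all doable, but it is strictly more work than the three-line conditioning argument above.
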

\begin{proof}The case where $t=T$  results from Lemma \ref{lem:MTO}.
	To obtain formula \eqref{eq:MTO05} from \eqref{eq:MTO} when $t<T$, one can proceed as follows. For every individual $i$ alive at time $T$, and for every $t\leq T$, there exists a unique $j\in \mathcal{I}$ such that $(j,t)$ belongs to the ancestral path of $i$. 	
	Thus, we have
	\[
	\sum_{i\in \widetilde{V}_{T}}\varphi(\widetilde{X}^{i}_{t})=\sum_{i\in \widetilde{V}_{T}}\sum_{j\in \widetilde{V}_{t}} \ind_{(j,t) \preceq (i,T)} \varphi(\widetilde{X}^{j}_{t}),
	\]
	where $	\widetilde{X}^{i}_{t}=\widetilde{X}^{j}_{t}$ since $(j,t)\preceq (i,T)$.
	Thus,
	\[
	\sum_{i\in \widetilde{V}_{T}}\sum_{j\in \widetilde{V}_{t}} \ind_{j \preceq i} \varphi(\widetilde{X}^{i}_{t})=\sum_{j\in \widetilde{V}_{t}}\varphi(\widetilde{X}^{j}_{t})\sum_{i\in \widetilde{V}_{T}} \ind_{j \preceq i} =\sum_{j\in \widetilde{V}_{t}}\varphi(\widetilde{X}^{j}_{t})\widetilde{N}(j)_{t,T},
	\]
	where $\widetilde{N}(j)_{t,T}$ denotes the number of descendents at time $T$ of an individual $j$ alive at time $t$ (with the convention that $\widetilde{N}(j)_{t,T}=0$ if $j\notin \widetilde{V}_{t}$, i.e.\ if $j$ does not exist at time $t$). Thus, denoting by $\left(\mathcal{F}_{t},\ t\in\mathbb{R}_{+} \right)$ the natural filtration associated with $\widetilde Z$, we have that
	\begin{eqnarray}
	\label{florian}
	\mathbb{E}\left[\sum_{i\in \widetilde{V}_{T}}\varphi(\widetilde{X}^{i}_{t})\Bigg| \mathcal{F}_{t}\right]&=&\sum_{j\in \widetilde{V}_{t}}\varphi(\widetilde{X}^{j}_{t})\mathbb{E}\left[\widetilde{N}(j)_{t,T}\bigg| \mathcal{F}_{t}\right]=\sum_{j\in \widetilde{V}_{t}}\varphi(\widetilde{X}^{j}_{t})m_{T-t}(\widetilde{X}^{j}_{t})\nonumber \\
	&=&\langle \widetilde{Z}_{t},\varphi\, m_{T-t}\rangle,
	\end{eqnarray}
	where $m_{t}$ has been defined in Corollary \ref{avril}.
	
	\medskip We  now apply Lemma \ref{lem:MTO} to \eqref{florian} for the function  $x\mapsto \varphi(x)\,m_{T-t}(x)$. That  gives
	\[
	\mathbb{E}_{\delta_{x}}\left[\langle \widetilde{Z}_{t},\varphi \,m_{T-t}\rangle\right]=\mathbb{E}_{x}\left[\exp\left(\int_{0}^{t} \left(1-\frac{1}{2}X_{s}^{2}-\lambda \right)ds \right)\varphi(X_{t})\,m_{T-t}(X_{t}) \right].
	\]
	Then, from the expression  of $m_{t}$ given in \eqref{tenan} and  one obtains by the  Markov property that
	\[
	\mathbb{E}_{\delta_{x}}\left[\langle \widetilde{Z}_{t},\varphi \,m_{T-t}\rangle\right]=\mathbb{E}_{x}\left[\exp\left(\int_{0}^{T} \left(1-\frac{1}{2}X_{s}^{2}-\lambda \right)ds \right)\varphi(X_{t}) \right].
	\]
	That concludes the proof.
\end{proof}

\medskip We are now interested in trajectorial extension of the previous formulae.

\medskip  \begin{prop}
\label{lem:MTO06}
	Let $\varphi$ in $\Co_{b}(\mathbb{R}^{n},\R)$. Then, for any positive times $t_{1}<t_{2}<\ldots<t_{n}<T$, for any $x\in \mathbb{R}$, we have
	\begin{equation}
	\label{eq:MTO06}
	\mathbb{E}_{\delta_{x}}\left[\sum_{i\in \widetilde V_{T}} \varphi(\widetilde X^{i}_{t_{1}},\ldots, \widetilde X^{i}_{t_{n}}) \right]=\mathbb{E}_{x}\left[\exp\left(\int_{0}^{T} \left(1-\frac{1}{2}X_{s}^{2}-\lambda \right)ds \right)\varphi(X_{t_{1}},\ldots,X_{t_{n}})  \right],
	\end{equation}
	where $X$ is the drifted Brownian defined in Lemma \ref{lem:MTO}. \end{prop}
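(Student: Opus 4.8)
The plan is to prove \eqref{eq:MTO06} by induction on the number $n$ of observation times. I would first establish the \emph{terminal} version, in which the time horizon coincides with the last observation time, and then recover an arbitrary horizon $T>t_n$ by the same $m$-weighting device already used in the proof of Lemma \ref{lem:MTO05}. Concretely, let $P(n)$ denote the statement that for all $0<s_1<\cdots<s_n$ and every $\phi\in\Co_b(\R^n,\R)$,
\[
\mathbb{E}_{\delta_x}\Big[\sum_{j\in\widetilde V_{s_n}}\phi(\widetilde X^j_{s_1},\dots,\widetilde X^j_{s_n})\Big]=\mathbb{E}_x\Big[\exp\Big(\int_0^{s_n}\big(1-\tfrac12 X_u^2-\lambda\big)\,du\Big)\,\phi(X_{s_1},\dots,X_{s_n})\Big].
\]
The base case $P(1)$ is exactly the instance $t=T=s_1$ of Lemma \ref{lem:MTO05}.

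For the induction step I would assume $P(n-1)$ and condition on $\mathcal F_{t_{n-1}}$, the filtration of $\widetilde Z$ used in the proof of Lemma \ref{lem:MTO05}. Every $j\in\widetilde V_{t_n}$ has a unique most recent ancestor $\ell\in\widetilde V_{t_{n-1}}$, and since $t_k\le t_{n-1}$ for $k\le n-1$ one has $\widetilde X^j_{t_k}=\widetilde X^\ell_{t_k}$ for those indices; only the last coordinate $\widetilde X^j_{t_n}$ truly depends on $j$. Grouping the sum over $j$ according to $\ell$ and invoking the branching property of $\widetilde Z$ (available because the competition term has been frozen at $\lambda$, see Section \ref{sec:coupling}) together with time homogeneity, each subtree rooted at $\ell$ behaves, conditionally on $\mathcal F_{t_{n-1}}$, as an independent copy started from $\delta_y$ with $y=\widetilde X^\ell_{t_{n-1}}$ and run for a duration $t_n-t_{n-1}$. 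Applying Lemma \ref{lem:MTO05} over this last interval, with the first $n-1$ coordinates held fixed as parameters, collapses the coordinate at $t_n$ and produces
\[
g(z_1,\dots,z_{n-1})=\mathbb{E}_{z_{n-1}}\Big[\exp\Big(\int_0^{t_n-t_{n-1}}\big(1-\tfrac12 X_u^2-\lambda\big)du\Big)\,\varphi(z_1,\dots,z_{n-1},X_{t_n-t_{n-1}})\Big],
\]
so that $\mathbb{E}\big[\sum_{j\in\widetilde V_{t_n}}\varphi(\cdots)\mid\mathcal F_{t_{n-1}}\big]=\sum_{\ell\in\widetilde V_{t_{n-1}}}g(\widetilde X^\ell_{t_1},\dots,\widetilde X^\ell_{t_{n-1}})$. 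As $g\in\Co_b(\R^{n-1},\R)$, I would then apply $P(n-1)$ to $g$ and use the Markov property of the drifted Brownian motion $X$ at time $t_{n-1}$ to concatenate the two Feynman–Kac potentials over $[0,t_{n-1}]$ and $[t_{n-1},t_n]$ into a single integral over $[0,t_n]$, which is precisely $P(n)$.

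To pass from the terminal horizon $t_n$ to a general $T>t_n$, I would condition on $\mathcal F_{t_n}$ exactly as in \eqref{florian}: since $\widetilde X^i_{t_k}=\widetilde X^j_{t_k}$ for $k\le n$ whenever $j\in\widetilde V_{t_n}$ is the ancestor of $i\in\widetilde V_T$, this yields
\[
\mathbb{E}_{\delta_x}\Big[\sum_{i\in\widetilde V_T}\varphi(\widetilde X^i_{t_1},\dots,\widetilde X^i_{t_n})\Big]=\mathbb{E}_{\delta_x}\Big[\sum_{j\in\widetilde V_{t_n}}\varphi(\widetilde X^j_{t_1},\dots,\widetilde X^j_{t_n})\,m_{T-t_n}(\widetilde X^j_{t_n})\Big].
\]
The function $(z_1,\dots,z_n)\mapsto\varphi(z_1,\dots,z_n)\,m_{T-t_n}(z_n)$ is bounded and continuous by Corollary \ref{avril}, so $P(n)$ applies to it, and a final use of the Markov property of $X$ at $t_n$ together with the representation \eqref{tenan} of $m_{T-t_n}(X_{t_n})$ merges the weight into $\exp\big(\int_{t_n}^T(1-\tfrac12 X_u^2-\lambda)du\big)$, giving \eqref{eq:MTO06}.

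I expect the only genuine difficulty to be the bookkeeping rather than any new probabilistic idea, since the engine is the same branching plus Markov structure as in Lemma \ref{lem:MTO05}. The two points requiring care are: treating the first $n-1$ arguments of $\varphi$ as frozen parameters while the single-time many-to-one acts only on the last coordinate, which is what lets the induction close; and gluing the two exponential functionals across the splitting time $t_{n-1}$ via the Markov property so that the potential concatenates over $[0,t_n]$ without gap or overlap. A helpful point is that all the auxiliary statements are phrased for merely bounded continuous test functions, so the intermediate functions $g$ and $\varphi\cdot m_{T-t_n}$ stay admissible (here using that $m$ is bounded and continuous), and no extra regularity step is needed.
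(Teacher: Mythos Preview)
Your proposal is correct and follows essentially the same route as the paper: an inductive conditioning argument using the branching property together with the Markov property of $X$, exactly as in the proof of Lemma \ref{lem:MTO05}. The only cosmetic differences are that the paper first reduces to product test functions $\varphi(x_1,\dots,x_n)=\prod_i\varphi_i(x_i)$ and peels forward from $\mathcal F_{t_1}$, whereas you work directly with general $\varphi\in\Co_b(\R^n,\R)$ and peel from $\mathcal F_{t_{n-1}}$; both orderings and both levels of generality close with the same Markov/branching gluing.
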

\begin{proof} By usual arguments, it is enough to prove the result for product functions
$$\varphi(x_{1},\cdots, x_{n})= \prod_{i=1}^n \varphi_{i}(x_{i}).$$
The proof of this result follows the same lines as the proof of Lemma \ref{lem:MTO05} conditionning first by $\mathcal{F}_{t_{1}}$, and then by $\mathcal{F}_{t_{2}}$ and so on. We leave the remaining of the proof to the reader.
\end{proof}

Let us recall that $\widetilde{H}$ is the historical process associated with $\widetilde{Z}$.
\begin{lem}
We have that for $T>0$, $\Phi:C([0,T],\mathbb{R})\to\mathbb{R}$ a continuous and bounded function and $x\in \mathbb{R}$:
\begin{multline}
\label{eq:MTO099}
\mathbb{E}_{\delta_x}\left[\langle \widetilde{H}_T,\Phi\rangle\right]=\mathbb{E}_{\delta_{x}}\left[\sum_{i\in \widetilde{V}_{T}}\Phi(\widetilde{X}^i_{s},\ s\leq T) \right]\\
=\mathbb{E}_{x}\left[\exp\left(\int_{0}^{T} \left(1-\frac{1}{2}X_{s}^{2}-\lambda \right)ds \right)\Phi(X_{s},\ s\leq T) \right],
\end{multline}
where $X$ is the drifted Brownian motion defined in Lemma \ref{lem:MTO}.
\end{lem}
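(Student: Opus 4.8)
The plan is to extend the trajectorial many-to-one identity of Proposition \ref{lem:MTO06} from finite-dimensional marginals to the full path functional. The statement \eqref{eq:MTO099} is exactly the path-valued version of \eqref{eq:MTO06}: the test function $\varphi(x_{t_1},\dots,x_{t_n})$ is replaced by a continuous bounded $\Phi$ on the whole trajectory space $C([0,T],\R)$. The first equality in \eqref{eq:MTO099} is merely the definition of the historical process $\widetilde H$ in \eqref{def:HKt} applied to $\widetilde Z$: integrating $\Phi$ against $\widetilde H_T$ means summing $\Phi$ over the lineages $(\widetilde X^i_s,\ s\le T)$ of individuals $i\in\widetilde V_T$. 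So the real content is the second equality, which I would obtain by a monotone class / approximation argument.

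First I would fix the cylindrical functions as the building blocks. Let $\mathcal{A}$ be the set of functions on $C([0,T],\R)$ of the form $y\mapsto\varphi(y_{t_1},\dots,y_{t_n})$ with $0\le t_1<\dots<t_n\le T$ and $\varphi\in\Co_b(\R^n,\R)$. By Proposition \ref{lem:MTO06}, identity \eqref{eq:MTO099} holds for every element of $\mathcal{A}$ (the boundary case $t_n=T$ is handled exactly as $t=T$ in Lemma \ref{lem:MTO05}, reducing to Lemma \ref{lem:MTO}). The family $\mathcal{A}$ is stable under multiplication (merge the two time grids and multiply the coordinate functions) and contains the constants, so it is an algebra that separates points of $C([0,T],\R)$. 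Both sides of \eqref{eq:MTO099} define finite positive measures on the path space: the left-hand side is $\mathbb{E}_{\delta_x}[\langle\widetilde H_T,\cdot\rangle]$, and the right-hand side is the law of the drifted Brownian motion $X$ reweighted by the nonnegative Feynman-Kac functional $\exp(\int_0^T(1-X_s^2/2-\lambda)ds)$. Both are finite: their total masses coincide with $m_T(x)=\mathbb{E}_x[\exp(\int_0^T(1-X_s^2/2-\lambda)ds)]<\infty$ by Corollary \ref{avril}, taking $\Phi\equiv 1$ (or $\varphi\equiv 1$ in \eqref{eq:MTO06}).

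The conclusion then follows from a standard functional monotone class theorem: two finite measures on a Polish space that agree on a multiplicative class of bounded continuous functions generating the Borel $\sigma$-field agree on all bounded continuous functions. Since $C([0,T],\R)$ with the uniform topology is Polish and $\mathcal{A}$ generates its Borel $\sigma$-field (finite-dimensional projections are continuous and determine the path), the two measures are equal, giving \eqref{eq:MTO099} for all continuous bounded $\Phi$. The main technical point to verify carefully is the finiteness and tightness ensuring that the finite-dimensional agreement upgrades to path-space agreement; concretely, one must check that the Feynman-Kac weighting produces a genuine finite measure and that the left-hand side is finite, which is precisely guaranteed by the moment control $m_T(x)<\infty$ from Corollary \ref{avril}. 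The branching property of $\widetilde Z$ (absence of interaction, used already in Lemma \ref{lem:MTO}) is what makes the right-hand expectation a single-particle functional rather than a genuinely many-body quantity, and this is the structural fact underlying the whole computation.
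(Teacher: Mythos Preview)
Your proof is correct but proceeds differently from the paper. The paper argues by explicit approximation: it introduces the piecewise-linear interpolation operator $I_n:\R^{n+1}\to C([0,T],\R)$, observes that $\|I_n(x(0),x(T/n),\dots,x(T))-x\|_\infty\le 2\,\omega(x,T/n)\to 0$ for every continuous path $x$, applies Proposition~\ref{lem:MTO06} to the cylindrical functional $\Phi\circ I_n$, and passes to the limit on both sides by dominated convergence (the dominating constant being $\|\Phi\|_\infty$, integrable against the Feynman--Kac weight by Corollary~\ref{avril}). Your route is measure-theoretic: you view each side of \eqref{eq:MTO099} as a finite measure on the Polish space $C([0,T],\R)$ with common total mass $m_T(x)$, note that they agree on the multiplicative separating algebra of cylinder functions by Proposition~\ref{lem:MTO06}, and conclude by the functional monotone class theorem. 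The paper's argument is more elementary and makes transparent exactly where the continuity of $\Phi$ and the bound $m_T(x)<\infty$ enter; your argument is cleaner and in fact delivers slightly more, since once the two measures are identified the identity \eqref{eq:MTO099} holds for every bounded Borel $\Phi$, not only continuous ones.
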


\begin{proof}
Let us consider the linear interpolation  $I_{n}:\mathbb{R}^{n}\to C([0,T])$ such that, for all $j\in\{0,\ldots,n-1\}$, for all $t\in[jT/n,(j+1)T/n)$ and $(u_1,\dots u_n)\in \R^{n+1}$,
\[
I_{n}(u_{0},\ldots,u_{n})(t)=(u_{j+1}-u_{j})\frac{n}{T}\Big(t-\frac{j}{n}T\Big)+u_{j}.
\]
Thus, we have for $x\in \Co([0,T],\R)$,
\[
\left\|I_{n}\big(x(0),\ldots,x(jT/n),\ldots , x(T)\big)-x\right\|_\infty\leq 2\, \sum_{j=0}^{n}\ind_{t\in[jT/n,(j+1)T/n)}\omega(x,T/n)= 2\, \omega(x, T/n),
\]
where $\omega(x,.)$ is the modulus of continuity of $x$.
Thus, the functions $I_{n}(x(0),\ldots,x(jT/n) ,\ldots , x(T))$ converge uniformly on $[0,T]$ to $x$, as $n$ tends to infinity. The result then follows from Lebesgue's theorem, Lemma \ref{lem:MTO06} and the continuity of $x$.
\end{proof}

\subsection{Computation of $m_{t}(x)$}
 In this Brownian framework, one can explicitely compute $m_{t}(x)$ from \eqref{tenan}  by  a method adapted from Fitzsimmons Pitman and Yor \cite{fitzsimmons1993markovian}.

\begin{prop}\label{prop:mtx}
For any $x\in \mathbb{R}$ and $t\in [0,T]$, we have
\begin{equation}
\label{def:m}
m_{t}(x)=\sqrt{1+\tanh(\sigma t)}\exp\left(-\frac{\left( x+e^{-\sigma t}c \right)^2}{2\sigma}\left(1+\tanh(\sigma t) \right)+\frac{( x+c)^2}{2\sigma} \right).
\end{equation}
\end{prop}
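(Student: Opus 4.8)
The plan is to start from the Feynman--Kac representation \eqref{tenan} and reduce the computation to a Gaussian quadratic exponential functional, then evaluate it by the Cameron--Martin--Girsanov device of Fitzsimmons--Pitman--Yor. Pulling the constant out of the exponent, \eqref{tenan} reads
$$m_t(x) = e^{(1-\lambda)t}\,\mathbb{E}_x\Big[\exp\Big(-\tfrac12\int_0^t X_s^2\,ds\Big)\Big],$$
where $X$ is the drifted Brownian motion \eqref{def:Y}, $dX_s=\sigma(dB_s - c\,ds)$. The only difficulty is the quadratic functional $\int_0^t X_s^2\,ds$; everything else is Gaussian bookkeeping.

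To remove it, I would introduce a change of measure that turns $X$ into a mean-reverting Ornstein--Uhlenbeck process. Adding the drift $-\sigma X_s$ produces, under a new measure $\mathbb{Q}$, the process $dX_s = \sigma d\widetilde B_s - \sigma(X_s + c)\,ds$, which reverts to $-c$ --- consistent with the mode of $F$. The point of choosing the mean-reversion rate equal to $\sigma$ is that the Novikov term in the Girsanov density is exactly $-\tfrac12\int_0^t X_s^2\,ds$, so it cancels the quadratic functional. Using It\^o's formula in the form $\int_0^t X_s\,dX_s = \tfrac12(X_t^2 - x^2 - \sigma^2 t)$, the stochastic integral in the density becomes a boundary term in $X_t^2$ plus a linear functional $\int_0^t X_s\,ds$, yielding
$$\mathbb{E}_x\Big[\exp\Big(-\tfrac12\int_0^t X_s^2\,ds\Big)\Big] = \mathbb{E}^{\mathbb{Q}}_x\Big[\exp\Big(\tfrac{1}{2\sigma}(X_t^2 - x^2 - \sigma^2 t) + c\int_0^t X_s\,ds\Big)\Big].$$
Under $\mathbb{Q}$ the process $X$ is Gaussian, so the last step is a Gaussian integral: I would read off the mean and variance of $X_t$ and its joint law with $\int_0^t X_s\,ds$ from the explicit OU solution, and evaluate the expectation of $\exp$ of a quadratic in $X_t$ plus a linear term by completing the square. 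The hyperbolic functions arise from the OU covariances; using $1+\tanh(\sigma t) = e^{\sigma t}/\cosh(\sigma t)$ one recombines everything with the prefactor $e^{(1-\lambda)t}$ (recall $1-\lambda = \tfrac{c^2}{2}+\tfrac{\sigma}{2}$ by \eqref{lambda}) into \eqref{def:m}.

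The main obstacle is this final Gaussian computation: one must track the correlation between $X_t$ and $\int_0^t X_s\,ds$ and, above all, check that the quadratic boundary coefficient $\tfrac{1}{2\sigma}$ lies in the convergence regime relative to $\mathrm{Var}_{\mathbb{Q}}(X_t)$ so that completing the square is legitimate. As an independent check, and a shortcut avoiding the Girsanov bookkeeping, I would plug the Gaussian ansatz $m_t(x)=\exp(\alpha(t)x^2+\beta(t)x+\gamma(t))$ into the Feynman--Kac PDE \eqref{m-edp}: matching powers of $x$ gives the Riccati equation $\alpha'=2\sigma^2\alpha^2-\tfrac12$ with $\alpha(0)=0$, solved by $\alpha(t)=-\tfrac{1}{2\sigma}\tanh(\sigma t)$, followed by linear ODEs for $\beta$ and $\gamma$. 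One then verifies that these reproduce the coefficients in \eqref{def:m}, namely $\beta(t)=\tfrac{c}{\sigma}(1-\operatorname{sech}(\sigma t))$ and the prefactor $\sqrt{1+\tanh(\sigma t)}$ coming from $\gamma$.
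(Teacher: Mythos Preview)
Your proposal is correct and follows the same Feynman--Kac/Girsanov philosophy as the paper, but the precise route differs in one notable way. The paper performs \emph{two} successive changes of measure: first a Girsanov shift to kill the $c$-drift (so that the drifted Brownian motion $B_s-cs$ becomes an ordinary Brownian motion $W$ under $\mathbb{Q}$, at the price of an extra factor $e^{-cW_t}$), and then a second Girsanov to turn $W$ into an OU process reverting to $0$. After these two steps the quantity to evaluate is $\mathbb{E}^{\mathbb{Q}'}[\exp(-\tfrac{\sigma}{2}W_t^2 - cW_t)]$, which depends only on the marginal law of $W_t$ and is a one-dimensional Gaussian integral. By contrast, your single Girsanov step turns $X$ directly into an OU reverting to $-c$, but leaves you with $\exp(\tfrac{1}{2\sigma}X_t^2 + c\int_0^t X_s\,ds)$ and hence forces you through the \emph{joint} Gaussian law of $(X_t,\int_0^t X_s\,ds)$, including the cross-covariance. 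This is perfectly doable, but the paper's two-step decomposition buys a cleaner endgame: only a scalar Gaussian is needed.

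Your alternative Riccati route --- plugging the ansatz $m_t(x)=\exp(\alpha x^2+\beta x+\gamma)$ into \eqref{m-edp} --- is not in the paper but is entirely sound (your $\alpha$ and $\beta$ check against \eqref{def:m}), and arguably the most economical of the three arguments, since it replaces all the probabilistic bookkeeping by three scalar ODEs. It is a good independent verification; the only cost is that one must guess the Gaussian shape in advance rather than discovering it.
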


\begin{proof}
Recall that $X_{t}=\sigma B_{t}-c\sigma t$ (see \eqref{def:Y}). By notational simplicity we assume that  the Brownian motion $B$ starts from $x$, then $X_0=\sigma x$ and we will compute $m_{t}(\sigma x)$. Equation \eqref{tenan} gives
\begin{align}
m_{t}(\sigma x)&=\mathbb{E}_{\delta_{\sigma x}}\left[\langle \widetilde{Z}_t, 1\rangle \right]=\mathbb{E}_{x}\left[\exp\left(\int_{0}^{t} \left(1-\frac{\sigma^2}{2}\left( B_{s}- c s\right)^2 -\lambda \right)ds \right)\right] \nonumber\\
 \nonumber\\&=e^{(1-\lambda)t}\mathbb{E}_{x}\left[\exp\left(-\frac{\sigma^2}{2}\int_{0}^{t}  (B_{s}-cs)^2 ds \right)\right]=: e^{(1-\lambda)t} I.\label{m2}
\end{align}
Let us compute explicitly $I$, the expectation appearing in the right hand side of \eqref{m2}. Recall that our probability space is endowed with the probability measure $\mathbb{P}$. Let $(\mathcal{F}^B_{t})_{t\geq 0}$ be the filtration of the Brownian motion $B$ and define the new probability $\mathbb{Q}$ by
\[
\frac{d\mathbb{Q}}{d\mathbb{P}}\mid\mathcal{F}^B_{t}=\exp(cB_{t}-\frac{c^2}{2}t-cx)
\]
using Girsanov theorem to kill the drift.
Under $\mathbb{Q}$, $W_{t}=B_{t}-ct$ is a Brownian motion. Hence,
\begin{align}
I= & \mathbb{E}_{x}^{\mathbb{Q}}\left[\exp\left(-\frac{\sigma^2}{2}\int_{0}^{t}  (B_{s}-cs)^2 ds \right)\exp(-cB_{t}+\frac{c^2}{2}t+cx)\right]\nonumber\\
= & \mathbb{E}_{x}^{\mathbb{Q}}\left[\exp\left(-\frac{\sigma^2}{2}\int_{0}^{t}  W_{s}^2 ds \right)\exp(-cW_{t}-\frac{c^2}{2}t+cx)\right]\nonumber\\
= & e^{cx-\frac{c^2}{2}t}\mathbb{E}_{x}^{\mathbb{Q}}\left[\exp\left(-\frac{\sigma^2}{2}\int_{0}^{t}  W_{s}^2 ds-cW_{t} \right)\right].\label{etape1}
\end{align}
Now, we want to compute the expectation in this last term.
We use that
$M_{t}=\frac{\sigma}{2}(W_{t}^2-t)-\frac{\sigma x^2}{2} = \sigma \int_{0}^t W_{s}\ dW_{s}
$ is a martingale with
\[
\langle M\rangle_{t}=\sigma^2\int_{0}^{t}W_{s}^2\ ds.
\]
Let $(\mathcal{F}^W_{t})_{t\geq 0}$ be the filtration of $W$ and set:
\[
\frac{d\mathbb{Q}'}{d\mathbb{Q}}\mid\mathcal{F}^W_{t}=\exp\left(\frac{1}{2}\sigma W_{t}^2-\frac{1}{2}\sigma t-\frac{\sigma^2}{2}\int_{0}^{t}W_{s}^{2}\ ds\right)e^{-\frac{\sigma x^2}{2}}.
\]
We have
\begin{equation}\label{etape2}
\mathbb{E}_{x}^{\mathbb{Q}}\left[\exp\left(-\frac{\sigma^2}{2}\int_{0}^{t}  W_{s}^2\ ds -cW_{t}\right)\right]=\mathbb{E}_{x}^{\mathbb{Q}'}\left[\exp\left(-\frac{1}{2}\sigma W_{t}^2-c W_{t} \right) \right]e^{\frac{\sigma x^2}{2}+\frac{\sigma t}{2}}.
\end{equation}
On the other hand, we have that under $\mathbb{Q}'$,
\[
W'_t=W_{t}-\sigma\int_{0}^{t}W_{s}\ ds
\]
is a $\mathbb{Q}'$-Brownian motion, and
\[
W_{t}=\sigma\int_{0}^{t}W_{s}\ ds+W'_{t}
\]is an Ornstein-Uhlenbeck process. Hence, for a given $t\geq 0$, $W_t$ under $\mathbb{Q}'$ is distributed as
\[
\mathcal{N}\left(e^{\sigma t}x,\frac{1}{2\sigma}(e^{2\sigma t}-1) \right).
\]
This allows us to compute \eqref{etape2}. For $Y$ a standard Gaussian random variable $\mathcal{N}(0,1)$,
\[
\Psi(u,v):=\mathbb{E}\left[e^{uY^2+vY} \right] =\frac{\exp\left(\frac{v^2}{2(1-2u)} \right)}{\sqrt{1-2u}},
\]
when $u<\frac{1}{2}$. For $Y$ any $\mathcal{N}(m,\delta^2)$ random variable,
\[
\mathbb{E}\left[e^{aY^2+bY} \right]=e^{(bm+m^2a)}\Psi(a\delta^2,\delta(2am+b)),\]
when $a \delta^2 <1/2$.\\

Here we have $m=e^{\sigma t}x$, $\delta^2=(e^{2\sigma t}-1)/(2\sigma)$, $a=-\sigma/2$ and $b=-c$. Thus,
\[
\begin{cases}
a\delta^2=-\frac{1}{4}\left(e^{2\sigma t}-1 \right)\\
\delta(2am+b)=\sqrt{\frac{(e^{2\sigma t}-1)}{2\sigma}}\left(-\sigma x e^{\sigma t}-c \right),
\end{cases}
\]
and applying the above computation yields for the expectation in the r.h.s. of \eqref{etape2}:
\begin{align}
\mathbb{E}_{x}^{\mathbb{Q}'}\left[\exp\left(-\frac{1}{2}\sigma W_{t}^2-c W_{t} \right) \right]= & e^{(-cxe^{\sigma t}-\frac{\sigma}{2}x^2e^{2\sigma t})}\frac{1}{\sqrt{\frac{1}{2}+\frac{1}{2}e^{2\sigma t}}}\exp\left(\frac{\frac{(e^{2\sigma t}-1)}{2\sigma}\left(\sigma x e^{\sigma t}+c \right)^2}{1+e^{2\sigma t}} \right)\nonumber\\
= & \sqrt{\frac{2}{1+e^{2\sigma t}}} e^{-\frac{1}{2\sigma} \big(\sigma x e^{\sigma t}+c\big)^2 } e^{\frac{c^2}{2\sigma}}\exp\left(\frac{(\sigma x e^{\sigma t}+c)^2}{2\sigma}\tanh(\sigma t)\right)\nonumber\\
= & \sqrt{\frac{2}{1+e^{2\sigma t}}}  e^{\frac{c^2}{2\sigma}}\exp\left(-\frac{(\sigma x e^{\sigma t}+c)^2}{2\sigma}\big(1-\tanh(\sigma t)\big)\right)\label{etape5}
\end{align}
where the second line has been obtained by using that
\[
\exp\big(-\frac{1}{2\sigma}(\sigma^2 x^2e^{2\sigma t}+2cx\sigma e^{\sigma t}+c^2-c^2)\big)=\exp\big(-\frac{1}{2\sigma}((\sigma xe^{\sigma t}+c)^2-c^2)\big),
\]
and that
\begin{align*}
\frac{e^{2\sigma t}-1}{e^{2\sigma t}+1}=\tanh(\sigma t).
\end{align*}
Gathering \eqref{etape5} with \eqref{etape1} and \eqref{etape2}, we obtain that:
\begin{align*}
I &=e^{cx-\frac{c^2}{2}t}   e^{\frac{\sigma x^2}{2}+\frac{\sigma t}{2}} e^{\frac{c^2}{2\sigma}}\sqrt{\frac{2}{1+e^{2\sigma t}}}\exp\left(-\frac{\left(\sigma xe^{\sigma t}+c \right)^2}{2\sigma}\left(1-\tanh(\sigma t)\right) \right)\\
&=e^{-\frac{c^2t}{2}+\frac{\sigma t}{2}}\sqrt{\frac{2}{1+e^{2\sigma t}}}\exp\left(\frac{(\sigma x+c)^2}{2\sigma}-\frac{\left(\sigma xe^{\sigma t}+c \right)^2}{2\sigma}\left(1-\tanh(\sigma t)\right) \right).
\end{align*}
Plugging this result into \eqref{m2} gives:
\begin{align*}
m_{t}(\sigma x)
&=e^{(1-\frac{1}{2}c^2+\frac{\sigma}{2}-\lambda)t}\frac{\sqrt{2}}{\sqrt{1+e^{2\sigma t}}}\exp\left(\frac{(\sigma x+c)^2}{2\sigma}-\frac{\left(\sigma xe^{\sigma t}+c \right)^2}{2\sigma}\left(1-\tanh(\sigma t)\right) \right)\\
= & e^{(1-\frac{1}{2}c^2-\frac{\sigma}{2}-\lambda)t}\frac{\sqrt{2}}{\sqrt{1+e^{-2\sigma t}}}\exp\left(\frac{(\sigma x+c)^2}{2\sigma}-\frac{\left(\sigma xe^{\sigma t}+c \right)^2}{2\sigma}\left(1-\tanh(\sigma t)\right) \right)\\
&=\sqrt{1+\tanh(\sigma t)}\exp\left(\frac{(\sigma x+c)^2}{2\sigma}-\frac{\left(\sigma xe^{\sigma t}+c \right)^2}{2\sigma}\left(1-\tanh(\sigma t)\right) \right),
\end{align*}where we use \eqref{lambda} and $2/(1+e^{-2x})=1+\tanh(x)$ for the third equality. Replacing in the above expression $x$ with $x/\sigma$ yields the announced expression for $m_t(x)$.
\end{proof}

\section{A spinal approach - The typical trajectory}\label{sec:spinal}

\subsection{The spinal process}
{Recall  that the stochastic population process is assumed starting} from the stationary  distribution $F$.
We want to characterize the behavior of the ancestral path of an individual uniformly sampled  at time $T$.
To this aim, the spinal approach \cite{hardyharris2,hardyharris3,bansaye_limit_2011} consists in considering the trajectory of a ``typical" individual in the population whose behavior summarizes the behavior of the entire population. The next theorem, will allow to describe the trait process along the spine and can be found in \cite{Marguet2} in a more general context.
To make the paper easy to read, a proof in our context is given in Appendix B.
\begin{thm}
\label{spine}
Recall that $m_t(x)=\E_x(\widetilde{N}_t)$ has been defined in \eqref{tenan}. For $T>0$, $x\in \R$ and $\Phi$ a continuous bounded function on $\Co([0,T],\R)$, we have
\begin{equation}
\label{eq:MTO2}
\mathbb{E}_{\delta_{x}}\left[\sum_{i\in \widetilde{V}_{T}}\Phi(\widetilde{X}^i_{s},\ s\leq T) \right]={m_{T}(x)}\mathbb{E}_{x}\left[\Phi(Y_{s},\ s\leq T) \right],
\end{equation}
where $Y$ is an inhomogeneous Markov process (depending on $t$) and with infinitesimal generator at time $t$ given for $\phi\in C_{b}^2(\mathbb{R})$ by
\begin{equation}\label{eq:generatorG}
\mathcal{G}_t \phi(x)=\frac{L(m_{T-t}\phi)(x)-\phi(x)Lm_{T-t}(x)}{m_{T-t}(x)},
\end{equation}
where $L$ is the infinitesimal generator  of the process $(X_{t}, t\ge 0)$ associated to $\widetilde{Z}$ and defined in \eqref{def:Y}.
\end{thm}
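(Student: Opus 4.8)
The plan is to derive Theorem \ref{spine} from the trajectorial many-to-one identity \eqref{eq:MTO099} by a Doob-type change of probability built from the function $m$. Writing $V(x)=1-\tfrac12 x^2-\lambda$ and recalling that $L$ denotes the generator of the drifted Brownian motion $X$ of \eqref{def:Y}, formula \eqref{eq:MTO099} states that $\mathbb{E}_{\delta_x}\big[\sum_{i\in\widetilde V_T}\Phi(\widetilde X^i_s,\,s\le T)\big]=\mathbb{E}_x\big[\exp(\int_0^T V(X_s)\,ds)\,\Phi(X_s,\,s\le T)\big]$. Hence it suffices to exhibit a probability $\mathbb{Q}_x$ under which this weighted expectation rewrites as $m_T(x)\,\mathbb{E}^{\mathbb{Q}_x}[\Phi(X_s,\,s\le T)]$ and to check that, under $\mathbb{Q}_x$, the canonical process is Markov with generator $\mathcal{G}_t$. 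Calling that process $Y$ then yields \eqref{eq:MTO2}.

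First I would introduce, for $t\in[0,T]$, the process $M_t=\exp\!\big(\int_0^t V(X_s)\,ds\big)\,m_{T-t}(X_t)$ and show it is a martingale under $\mathbb{P}_x$. Setting $g(t,x)=m_{T-t}(x)$, equation \eqref{m-edp} gives the space-time harmonicity $\partial_t g+Lg+Vg=0$, so It\^o's formula produces
\[
dM_t=\exp\!\Big(\int_0^t V(X_s)\,ds\Big)\Big[(Vg+\partial_t g+Lg)(t,X_t)\,dt+\sigma\,\partial_x g(t,X_t)\,dB_t\Big]=M_t\,\sigma\,\frac{\partial_x m_{T-t}(X_t)}{m_{T-t}(X_t)}\,dB_t,
\]
a nonnegative local martingale. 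Since $m_0\equiv 1$, one has $M_T=\exp(\int_0^T V(X_s)\,ds)$ and $M_0=m_T(x)$, and \eqref{tenan} gives $\mathbb{E}_x[M_T]=m_T(x)=M_0$; a nonnegative local martingale with constant expectation is a true martingale on $[0,T]$. I then define $\mathbb{Q}_x$ on $\mathcal{F}_T$ by $d\mathbb{Q}_x/d\mathbb{P}_x=M_T/M_0$, which rewrites the right-hand side of \eqref{eq:MTO099} as $m_T(x)\,\mathbb{E}^{\mathbb{Q}_x}[\Phi(X_s,\,s\le T)]$. Because $M_t/M_0=\mathcal{E}\big(\int_0^\cdot \sigma\,(\partial_x g/g)(s,X_s)\,dB_s\big)_t$, Girsanov's theorem shows that $W_t=B_t-\int_0^t \sigma\,(\partial_x m_{T-s}/m_{T-s})(X_s)\,ds$ is a $\mathbb{Q}_x$-Brownian motion and that $dX_t=\sigma\,dW_t+\big(\sigma^2\,\partial_x m_{T-t}(X_t)/m_{T-t}(X_t)-\sigma c\big)\,dt$. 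Thus under $\mathbb{Q}_x$ the process $X$ is a time-inhomogeneous diffusion with generator $L+\sigma^2\,(m'_{T-t}/m_{T-t})\,\partial_x$. The carré-du-champ identity $L(fg)=fLg+gLf+\sigma^2 f'g'$ applied with $g=m_{T-t}$ gives $L\phi+\sigma^2(m'_{T-t}/m_{T-t})\phi'=\big(L(m_{T-t}\phi)-\phi\,Lm_{T-t}\big)/m_{T-t}=\mathcal{G}_t\phi$, which is exactly \eqref{eq:generatorG}; denoting this $\mathbb{Q}_x$-process by $Y$ closes the argument.

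I expect the main obstacle to be the rigorous justification of the change of measure, namely that $\mathbb{Q}_x$ is a genuine probability (equivalently, that $M$ is a true and not merely local martingale) and that Girsanov's theorem applies. A direct Novikov-type verification is delicate here, since the logarithmic derivative $\sigma\,m'_{T-t}/m_{T-t}$ grows linearly in $x$ (as is visible from the explicit formula \eqref{def:m}), so the natural exponential moment is only borderline integrable. The clean way around this is precisely the identity $\mathbb{E}_x[M_T]=m_T(x)=M_0$ coming from \eqref{tenan}, which upgrades the nonnegative local martingale to a martingale without any Novikov condition. The remaining regularity needed to run It\^o and Girsanov (boundedness and smoothness of $m$) is supplied by Corollary \ref{avril}, and \eqref{def:m} further shows that $m'_{T-t}/m_{T-t}$ is affine in $x$, so that $Y$ solves a well-posed time-inhomogeneous Ornstein--Uhlenbeck-type SDE, consistent with the reversal analysis carried out in the next section.
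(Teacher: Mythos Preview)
Your proof is correct and follows essentially the same idea as the paper: both identify the law of $Y$ as a Doob-type $h$-transform of the drifted Brownian motion $X$, built from the space-time harmonic function $(t,x)\mapsto m_{T-t}(x)$ and the PDE \eqref{m-edp}. The only difference is one of packaging: the paper defines the biased measure directly and verifies the martingale problem $\mathbb{E}^{\mu^{T,x}}[f(X_t)\mid\mathcal{F}_u]=f(X_u)+\mathbb{E}^{\mu^{T,x}}[\int_u^t\mathcal{G}_sf(X_s)\,ds\mid\mathcal{F}_u]$ by computing conditional expectations of $f(X_t)m_{T-t}(X_t)\exp(\int_0^t h)$ using It\^o and the relation $\mathbb{E}[\exp(\int_0^T h)\mid\mathcal{F}_u]=m_{T-u}(X_u)\exp(\int_0^u h)$, whereas you instead recognize $M_t=\exp(\int_0^t V)\,m_{T-t}(X_t)$ as an exponential martingale and apply Girsanov to read off the SDE and hence the generator via the carr\'e-du-champ identity.

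Your route has the minor advantage of producing the SDE for $Y$ directly, which is exactly what the paper needs next in \eqref{SDE-Y}; the paper's route avoids invoking Girsanov and is slightly more robust in that it never needs to argue that the local martingale is a true one (it works at the level of the martingale problem). Your handling of the true-martingale issue---upgrading the nonnegative local martingale via $\mathbb{E}_x[M_T]=m_T(x)=M_0$ from \eqref{tenan}---is clean and correct, and neatly sidesteps the borderline Novikov condition you rightly flag.
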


 Note that the law of the spinal process is biased by the population size at each time, described by the function $m$, which makes the process inhomogeneous. This highlights the form of the generator given in \eqref{eq:MTO2}. \\
 The next proposition allows to relate \eqref{eq:MTO2} to the distribution of an individual chosen uniformly at random among the population alive at time $t$ (i.e. in the empirical distribution) when the population is large.

\me  \begin{prop}\label{prop:LGN}
For any $\Phi$ continuous and bounded,
$${\lim_{K\to +\infty}\mathbb{E}\left[\frac{1}{\widetilde N^K_{T}}\sum_{i\in \widetilde V^K_{T}}\Phi(\widetilde{X}^i_{s},\ s\leq T) \ \bigg|\ \widetilde{X}^i_{0} = x, \forall i\in \widetilde V^K_{0}\right]} = {\mathbb{E}_{\delta_{x}}\left[\sum_{i\in \widetilde V_{T}}\Phi(\widetilde{X}^i_{s},\ s\leq T) \right]\over m_{T}(x)},$$
where $\widetilde N^K_{T} = K \langle \widetilde Z^K_{T},1\rangle$ and $\widetilde V^K_{T}$ is the set of individuals alive at time $T$. \end{prop}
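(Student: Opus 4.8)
The plan is to use the branching property of $\widetilde Z^K$ — available precisely because the competition term has been frozen to the constant $\lambda$ — in order to rewrite the empirical average as a ratio of two sums of independent and identically distributed contributions, and then to combine the strong law of large numbers with the many-to-one identity \eqref{eq:MTO099}.

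First I would fix the conditioning $\{\widetilde X^i_0 = x,\ \forall i\in \widetilde V^K_0\}$: the population then starts from $N^K_0$ individuals all located at $x$, with $N^K_0 = K\langle \widetilde Z^K_0,1\rangle\to+\infty$ (the carrying-capacity scaling, e.g.\ $N^K_0=K$ so that $\widetilde Z^K_0=\delta_x$). Since $\widetilde Z^K$ is a branching-diffusion without interaction, the subfamilies issued from these $N^K_0$ founders are i.i.d. Indexing them by $j\in\{1,\dots,N^K_0\}$ and writing $\widetilde V^{(j)}_T$ for the descendants of $j$ alive at $T$, I set
$$S_j := \sum_{i\in \widetilde V^{(j)}_T}\Phi(\widetilde X^i_s,\ s\le T), \qquad \widetilde N^{(j)}_T := \big|\widetilde V^{(j)}_T\big|,$$
so that $\sum_{i\in\widetilde V^K_T}\Phi(\widetilde X^i_s, s\le T) = \sum_{j=1}^{N^K_0}S_j$ and $\widetilde N^K_T = \sum_{j=1}^{N^K_0}\widetilde N^{(j)}_T$; equivalently the empirical average equals $\langle \widetilde H^K_T,\Phi\rangle/\langle \widetilde Z^K_T,1\rangle$. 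The pairs $(S_j,\widetilde N^{(j)}_T)$ are i.i.d.\ copies of $(S,\widetilde N_T)$ under $\mathbb{P}_{\delta_x}$, where $S=\sum_{i\in\widetilde V_T}\Phi(\widetilde X^i_s,s\le T)$ and $\widetilde N_T=\langle\widetilde Z_T,1\rangle$.

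Next I would identify the means and apply the law of large numbers. By \eqref{eq:MTO099}, $\mathbb{E}_{\delta_x}[S]$ is exactly the numerator of the announced right-hand side, while $\mathbb{E}_{\delta_x}[\widetilde N_T]=m_T(x)$ by \eqref{tenan}. Both are finite because $\Phi$ is bounded: the bound $|S|\le \|\Phi\|_\infty\,\widetilde N_T$ gives $\mathbb{E}_{\delta_x}|S|\le \|\Phi\|_\infty\, m_T(x)<+\infty$, the finiteness of $m_T(x)$ being Corollary \ref{avril}. Since $N^K_0\to+\infty$ as $K\to\infty$, the strong law of large numbers yields, almost surely,
$$\frac{1}{N^K_0}\sum_{j=1}^{N^K_0}S_j \longrightarrow \mathbb{E}_{\delta_x}[S] \qquad\text{and}\qquad \frac{1}{N^K_0}\sum_{j=1}^{N^K_0}\widetilde N^{(j)}_T \longrightarrow m_T(x).$$
As $m_T(x)>0$, the denominator limit is strictly positive, and the ratio converges almost surely to $\mathbb{E}_{\delta_x}[S]/m_T(x)$, the target right-hand side.

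The only delicate point is to upgrade this almost sure convergence of the ratio to convergence of its expectation, while handling the extinction event $\{\widetilde N^K_T=0\}$ on which the ratio is ill-defined. Here the positivity $m_T(x)=\mathbb{E}_{\delta_x}[\widetilde N_T]>0$ forces $\mathbb{P}_{\delta_x}(\widetilde N_T=0)<1$, so by independence $\mathbb{P}(\widetilde N^K_T=0)=\mathbb{P}_{\delta_x}(\widetilde N_T=0)^{N^K_0}\to 0$: the denominator is almost surely positive for $K$ large and the ratio is eventually well-defined. On its domain it is bounded in modulus by $\|\Phi\|_\infty$, uniformly in $K$, so bounded convergence turns the almost sure convergence into convergence of expectations and gives the claim. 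The conceptual crux is therefore the interplay between the frozen-competition branching property (independence of subfamilies) and the two facts $0<m_T(x)<+\infty$; the remainder is a standard ratio law of large numbers with a negligible extinction event.
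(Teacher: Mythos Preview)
Your proof is correct and follows essentially the same approach as the paper: exploit the branching property to obtain i.i.d.\ subfamilies from the $K$ founders at $x$, apply the strong law of large numbers to both numerator and denominator, and conclude for the ratio. You are in fact more careful than the paper, which omits the handling of the extinction event $\{\widetilde N^K_T=0\}$ and the bounded-convergence step needed to pass from almost sure convergence of the ratio to convergence of its expectation.
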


\me \begin{proof}
By the branching property, the trees started from each of the $K$ individuals with trait $x$ are independent with same law. Then by the law of large numbers, the two sequences $({\widetilde N^K_{T}\over K})_{K}$ and $({\sum_{i\in \widetilde{V}^K_{T}}\Phi(\widetilde X^i_{s},\ s\leq T)\over K})_{K}$ converge almost surely respectively to $m_{T}(x)$ and $\mathbb{E}_{\delta_{x}}\left[\sum_{i\in \widetilde V_{T}}\Phi(\widetilde X^i_{s},\ s\leq T) \right]$.
The result follows.
\end{proof}

\bigskip Notice that a corollary of Theorem \ref{spine} and Proposition \ref{prop:LGN} is that:
\begin{cor}\label{cor:hist-spine}When $(Z^K_0)$ satisfies \eqref{hyp:moment} and converges weakly and in probability to the measure $\xi_0$ when $K\rightarrow +\infty$,
\begin{eqnarray}
\lim_{K\rightarrow +\infty}\E_{Z^K_0}\left[\frac{\langle \widetilde{H}^K_T,\Phi\rangle}{\langle \widetilde{H}^K_T,1\rangle}\right]&=&\lim_{K\to +\infty}\mathbb{E}\left[\frac{1}{\widetilde N^K_{T}}\sum_{i\in \widetilde V^K_{T}}\Phi(\widetilde{X}^i_{s},\ s\leq T) \ \bigg|\ \sum_{i\in \widetilde V^K_{0}} \delta_{\widetilde{X}^i_{0}}= K Z^K_0 \right]\nonumber  \\
&=&\frac{ \int_{\mathbb{R}} m_{T}(x) \mathbb{E}_{x}\left[\Phi(Y_{s},\ s\leq T)\right]\xi_0(dx)}{\int_{\mathbb{R}} m_{T}(x) \xi_0(dx)},
\label{eq:spine-processuslinearise}
\end{eqnarray}
where $Y$ is the process with generator \eqref{eq:generatorG}.
\end{cor}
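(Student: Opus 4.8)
The first equality is purely a matter of unpacking definitions, so I would dispose of it immediately. By \eqref{def:HKt}, $\langle\widetilde H^K_T,1\rangle=|\widetilde V^K_T|/K=\widetilde N^K_T/K$ and $\langle\widetilde H^K_T,\Phi\rangle=\frac1K\sum_{i\in\widetilde V^K_T}\Phi(\widetilde X^i_s,\,s\le T)$, so the ratio $\langle\widetilde H^K_T,\Phi\rangle/\langle\widetilde H^K_T,1\rangle$ equals $\frac{1}{\widetilde N^K_T}\sum_{i\in\widetilde V^K_T}\Phi(\widetilde X^i_s,\,s\le T)$; and conditioning on $\sum_{i\in\widetilde V^K_0}\delta_{\widetilde X^i_0}=KZ^K_0$ simply prescribes the (random) initial configuration, which is what the subscript $Z^K_0$ records.

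For the second equality I would condition on the initial configuration and exploit the branching property of $\widetilde Z^K$. Listing the $N^K_0=K\langle Z^K_0,1\rangle$ initial individuals as $x_1,\dots,x_{N^K_0}$, the subtrees rooted at distinct $x_\ell$ are independent; writing $\widetilde N^{K,\ell}_T$ and $S^{K,\ell}_T=\sum_{i:\,\ell\preceq i}\Phi(\widetilde X^i_s,\,s\le T)$ for the population size and the $\Phi$-weight carried by the $\ell$-th subtree at time $T$, one has $\widetilde N^K_T=\sum_\ell\widetilde N^{K,\ell}_T$ and $K\langle\widetilde H^K_T,\Phi\rangle=\sum_\ell S^{K,\ell}_T$. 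Taking conditional expectations and invoking \eqref{tenan} for $m_T$ together with the many-to-one identity \eqref{eq:MTO2} of Theorem \ref{spine} gives
\[
\mathbb{E}\big[\langle\widetilde Z^K_T,1\rangle\mid Z^K_0\big]=\langle Z^K_0,m_T\rangle,\qquad \mathbb{E}\big[\langle\widetilde H^K_T,\Phi\rangle\mid Z^K_0\big]=\langle Z^K_0,h\rangle,
\]
where $h(x):=m_T(x)\,\mathbb{E}_x[\Phi(Y_s,\,s\le T)]$. By \eqref{eq:MTO099} one may also write $h(x)=\mathbb{E}_x\big[\exp(\int_0^T(1-\frac12 X_s^2-\lambda)\,ds)\,\Phi(X_s,\,s\le T)\big]$, an expectation over the drifted Brownian motion \eqref{def:Y} started at $x$; together with Corollary \ref{avril}, this shows that both $m_T$ and $h$ are bounded and continuous. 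Since $Z^K_0\to\xi_0$ weakly in probability, the two conditional means then converge in probability to $\int m_T\,d\xi_0$ and $\int h\,d\xi_0$.

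It remains to promote these means to the random ratios, and this law-of-large-numbers step is the real content. Conditionally on $Z^K_0$ the summands are independent, so
\[
\mathrm{Var}\big(\langle\widetilde H^K_T,\Phi\rangle\mid Z^K_0\big)=\frac{1}{K^2}\sum_\ell\mathrm{Var}(S^{K,\ell}_T)\le\frac{\|\Phi\|_\infty^2}{K}\,\langle Z^K_0,g\rangle,\qquad g(x):=\mathbb{E}_{\delta_x}\big[\langle\widetilde Z_T,1\rangle^2\big],
\]
and likewise for $\langle\widetilde Z^K_T,1\rangle=\langle\widetilde H^K_T,1\rangle$. The second-moment function $g$ is bounded in $x$ (a Gronwall estimate on the second moment of the branching process, the $x^2/2$ killing only helping, consistent with the Gaussian trait-decay already visible in \eqref{def:m}), so $\langle Z^K_0,g\rangle$ stays bounded uniformly in $K$ by \eqref{eq:propagation}, and both conditional variances vanish. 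Hence numerator and denominator converge in probability to $\int h\,d\xi_0$ and $\int m_T\,d\xi_0$; the latter is strictly positive because $m_T>0$ and $\xi_0$ is a nonzero measure, so Slutsky's lemma yields convergence in probability of the ratio to $\int m_T(x)\mathbb{E}_x[\Phi(Y_s,\,s\le T)]\,\xi_0(dx)\big/\int m_T\,d\xi_0$. Since this ratio is bounded by $\|\Phi\|_\infty$, bounded convergence upgrades it to convergence of the outer expectation $\mathbb{E}_{Z^K_0}[\cdot]$, which is exactly \eqref{eq:spine-processuslinearise}.

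The hard part is precisely this concentration step: one averages over a triangular array of independent, non-identically distributed subtree contributions whose common initial law $Z^K_0$ is itself random and whose starting traits range over all of $\mathbb{R}$, so the crux is the uniform-in-$K$ control of $\langle Z^K_0,g\rangle$, which rests on the Gaussian decay of the branching second moment in the trait variable rather than on any compactness of trait space. Everything else reduces to the definitional first equality and to reading off the limiting means from Theorem \ref{spine}.
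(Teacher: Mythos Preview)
Your argument is correct and follows the same route the paper intends: the branching property makes the subtrees independent, Theorem~\ref{spine} identifies the per-subtree means, and a law of large numbers handles the ratio. The paper states the corollary as an immediate consequence of Theorem~\ref{spine} and Proposition~\ref{prop:LGN} without further proof; Proposition~\ref{prop:LGN} treats only the case where all $K$ initial individuals share the same trait $x$ and invokes the strong law directly, so the passage to a general random initial measure $Z^K_0\to\xi_0$ is left implicit. Your second-moment bound on the triangular array of independent subtree contributions is exactly what is needed to fill this gap. One minor remark: your justification that $g(x)=\mathbb{E}_{\delta_x}[\langle\widetilde Z_T,1\rangle^2]$ is bounded in $x$ can be made cleaner by noting that, since the death rate is nonnegative, $\langle\widetilde Z_T,1\rangle$ is stochastically dominated by a pure Yule process with rate~$1$ starting from one individual, whose second moment depends only on $T$; this avoids any appeal to Gronwall or to the Gaussian decay of \eqref{def:m}.
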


The explicit computation of $m_t(x)$ yields the generator of $Y$:
\begin{prop}\label{prop:generatorG-explicite}The generator of the spine $Y$ describing in forward time the path of particle chosen at random in $\widetilde V^K_T$ is given for $x\in \R$ and $0\le t\le T$ by
\begin{equation}\label{eq:generatorG-explicite}
\mathcal{G}_{t}f(x)=\frac{\sigma^2}{2}f''(x)-\sigma x\tanh(\sigma (T-t))f'(x)-\frac{\sigma c}{\cosh(\sigma (T-t))} f'(x).
\end{equation}
\end{prop}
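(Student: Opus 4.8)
The plan is to substitute the closed form of $m_{T-t}$ from Proposition \ref{prop:mtx} into the abstract $h$-transform formula \eqref{eq:generatorG} and then simplify with hyperbolic identities. First I would record the generator of the drifted Brownian motion $X$ of \eqref{def:Y}: since $dX_t=\sigma(dB_t-c\,dt)$ has diffusion coefficient $\sigma^2$ and drift $-\sigma c$, one has $L\phi(x)=\frac{\sigma^2}{2}\phi''(x)-\sigma c\,\phi'(x)$ for $\phi\in C^2_b(\R)$. The differentiability in $x$ needed below is guaranteed by the $C^{1,\infty}_b$ regularity of $m$ established in Corollary \ref{avril}.

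Next I would reduce the general form \eqref{eq:generatorG} to an expression containing only a logarithmic derivative of $m$. Writing $m=m_{T-t}$ and expanding $(m\phi)''=m''\phi+2m'\phi'+m\phi''$ and $(m\phi)'=m'\phi+m\phi'$, the zeroth-order contributions $\frac{\sigma^2}{2}m''\phi$ and $-\sigma c\,m'\phi$ in $L(m\phi)$ cancel exactly against $\phi\,Lm$, leaving
\[
\mathcal{G}_t\phi(x)=\frac{\sigma^2}{2}\phi''(x)+\Big(\sigma^2\,\frac{m_{T-t}'(x)}{m_{T-t}(x)}-\sigma c\Big)\phi'(x).
\]
Thus the whole computation boils down to evaluating $\partial_x\log m_{T-t}(x)$.

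From the explicit expression in Proposition \ref{prop:mtx}, writing $\tau:=\sigma(T-t)$, only the two Gaussian exponents depend on $x$, so
\[
\sigma\,\partial_x\log m_{T-t}(x)=-(x+e^{-\tau}c)\big(1+\tanh\tau\big)+(x+c)=-x\tanh\tau+c\big(1-e^{-\tau}(1+\tanh\tau)\big).
\]
The key simplification is the identity $1+\tanh\tau=e^{\tau}/\cosh\tau$, which gives $e^{-\tau}(1+\tanh\tau)=1/\cosh\tau$ and hence $\sigma\,\partial_x\log m_{T-t}(x)=-x\tanh\tau+c\big(1-1/\cosh\tau\big)$. Plugging this into the drift coefficient $\sigma^2\,\partial_x\log m_{T-t}(x)-\sigma c$ of the reduced generator, the $+\sigma c$ produced by the bracket cancels the $-\sigma c$, and one is left with $-\sigma x\tanh(\sigma(T-t))-\sigma c/\cosh(\sigma(T-t))$, which is precisely \eqref{eq:generatorG-explicite}. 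No step presents a genuine conceptual obstacle; the only care required is the hyperbolic bookkeeping, in particular recognizing $e^{-\tau}(1+\tanh\tau)=1/\cosh\tau$, which is exactly what collapses the $c$-dependent drift into the clean $-\sigma c/\cosh(\sigma(T-t))$ term.
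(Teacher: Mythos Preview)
Your proposal is correct and follows essentially the same route as the paper: both expand $L(m_{T-t}f)-fLm_{T-t}$ to isolate the extra drift $\sigma^2\,\partial_x\log m_{T-t}-\sigma c$, compute $\partial_x\log m_{T-t}$ from the closed form in Proposition~\ref{prop:mtx}, and simplify with the identity $e^{-\tau}(1+\tanh\tau)=1/\cosh\tau$. Your reduction to the logarithmic derivative is stated a bit more cleanly, but there is no substantive difference in method.
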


\begin{proof}
We have
\begin{align*}
m_{T-t}(x) \mathcal{G}_{t}f(x)&= {\sigma^2\over 2} \big(m_{T-t} f\big)''(x) - c \sigma \big(m_{T-t} f\big)'(x) - f  \big({\sigma^2\over 2}m_{T-t}'' - c \sigma m_{T-t} ' \big)(x)\\
 &=   {\sigma^2\over 2} \big( 2 m_{T-t}'(x) f'(x) + m_{T-t}(x) f''(x) \big)- c \sigma m_{T-t}(x) f'(x) .
\end{align*}

Since, by a derivation of \eqref{def:m},
\begin{align*}
\partial_{x}m_{t}(x)&=\frac{1}{\sigma}\left(-\left( x +c e^{-\sigma t}\right)(1+\tanh(\sigma t))+ x+c\right)m_{t}(x)\\
&=\frac{1}{\sigma}\left(- x\tanh(\sigma t)-c\left(e^{-\sigma t}(1+\tanh(\sigma t))-1 \right)\right)m_{t}(x),
\end{align*}
we obtain that for $x\in \R$ and $0\le t\le T$,
\begin{align*}
\mathcal{G}f(x)&=\frac{\sigma^2}{2}f''(x)+\sigma\left(- x\tanh(\sigma (T-t))- c \left({1\over \cosh(\sigma (T-t))} - 1\right)\right)f'(x)-\sigma cf'(x)\\
&=\frac{\sigma^2}{2}f''(x)- \sigma x\tanh(\sigma (T-t)f'(x) -  \sigma c {1\over \cosh(\sigma (T-t))}f'(x).
\end{align*}
\end{proof}

Let us highlight that
\[
\begin{cases}
\mathcal{G}f\simeq
\frac{\sigma^2}{2}f''-\sigma xf', \quad \text{ for } |T-t|>>\frac{1}{\sigma}\\
\mathcal{G}f\simeq
\frac{\sigma^2}{2}f''-\sigma cf', \quad \text{ for } |T-t|<<\frac{1}{\sigma}.\\
\end{cases}
\]
We have two regimes depending on the distance between $t$ and the final observation time $T$. For a small $t$ and large $T$, the generator is close to the one of an Ornstein-Uhlenbeck process fluctuating around $0$ and for $t$ close to $T$, the generator is close to the one of the  drifted Brownian motion
$Y$ driving the population to the neighborhood of $-c$, as observed in the simulations.\\

At this point, we can give the law of the history of an uniformly sampled individual
when the initial condition was $F$. From the explicit value of the generator given in \eqref{eq:generatorG-explicite}, one can deduce that there exists a Brownian motion $( B_{t})_{t}$ independent of $Y_0$ such that for $0\leq t \leq T$, the process $Y$ satisfies the stochastic differential equation:
\be
\label{SDE-Y}
dY_{t}=-\sigma \tanh(\sigma(T-t))Y_{t}dt - \frac{\sigma c}{\cosh (\sigma(T-t))} dt+\sigma d B_{t}.
\ee

\begin{prop}\label{prop:Ytilde-resolu}The Markov process $Y$ with generator \eqref{eq:generatorG-explicite} is a Gaussian process which can be expressed explicitly for $0\leq t\leq T$:
\begin{multline}\label{eq:Ytilde-resolu}
Y_t= \frac{\cosh(\sigma(T-t))}{\cosh(\sigma T)}Y_0 + c \cosh(\sigma(T-t)) \big(\tanh(\sigma(T-t))-\tanh(\sigma T)\big)\\
+\sigma \cosh(\sigma(T-t)) \int_{0}^{t}\frac{dB_s}{\cosh(\sigma(T-s))} .
\end{multline}
\end{prop}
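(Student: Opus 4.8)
The plan is to solve the linear stochastic differential equation \eqref{SDE-Y} explicitly using the standard integrating-factor method for one-dimensional linear SDEs, and then to read off Gaussianity from the resulting expression. Since the drift is linear in $Y_t$ with a deterministic (time-dependent) coefficient, plus a deterministic forcing term, and the diffusion coefficient $\sigma$ is constant, the solution is a Gaussian process provided $Y_0$ is Gaussian (or deterministic), because it is an affine functional of $Y_0$ and of the Gaussian Wiener integral.

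First I would compute the integrating factor. The homogeneous part of \eqref{SDE-Y} has drift coefficient $-\sigma\tanh(\sigma(T-t))$, so I set
\begin{equation}
\Lambda_t=\exp\Big(\int_0^t \sigma\tanh(\sigma(T-s))\,ds\Big).
\end{equation}
The elementary antiderivative $\int \sigma\tanh(\sigma(T-s))\,ds = -\log\cosh(\sigma(T-s))$ gives
\begin{equation}
\Lambda_t=\frac{\cosh(\sigma T)}{\cosh(\sigma(T-t))}.
\end{equation}
Then I apply It\^o's formula (in fact, since the diffusion coefficient does not depend on $Y$, the ordinary product rule) to $\Lambda_t Y_t$, obtaining
\begin{equation}
d(\Lambda_t Y_t)=\Lambda_t\Big(-\frac{\sigma c}{\cosh(\sigma(T-t))}\Big)dt+\Lambda_t\,\sigma\,dB_t,
\end{equation}
because the linear drift term cancels by construction. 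Integrating from $0$ to $t$ and dividing by $\Lambda_t$ yields
\begin{equation}
Y_t=\frac{1}{\Lambda_t}Y_0-\frac{1}{\Lambda_t}\int_0^t \Lambda_s\frac{\sigma c}{\cosh(\sigma(T-s))}\,ds+\frac{\sigma}{\Lambda_t}\int_0^t \Lambda_s\,dB_s.
\end{equation}
Substituting $\Lambda_t=\cosh(\sigma T)/\cosh(\sigma(T-t))$ turns the prefactor $1/\Lambda_t$ into $\cosh(\sigma(T-t))/\cosh(\sigma T)$, which matches the coefficient of $Y_0$ in \eqref{eq:Ytilde-resolu}, and turns the stochastic integral into $\sigma\cosh(\sigma(T-t))\int_0^t dB_s/\cosh(\sigma(T-s))$, matching the last term.

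The one genuine computation is the deterministic drift integral. With $\Lambda_s=\cosh(\sigma T)/\cosh(\sigma(T-s))$, the factors of $\cosh(\sigma T)$ cancel against $1/\Lambda_t$ and I am left with $-\sigma c\,\cosh(\sigma(T-t))\int_0^t \frac{ds}{\cosh^2(\sigma(T-s))}$. Since $\frac{d}{ds}\tanh(\sigma(T-s))=-\sigma/\cosh^2(\sigma(T-s))$, this integral evaluates to $\frac{1}{\sigma}\big(\tanh(\sigma(T-t))-\tanh(\sigma T)\big)$, giving exactly the middle term $c\cosh(\sigma(T-t))\big(\tanh(\sigma(T-t))-\tanh(\sigma T)\big)$ of \eqref{eq:Ytilde-resolu}. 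The main (mild) obstacle is simply keeping the hyperbolic identities straight and verifying these two antiderivatives; there is no conceptual difficulty, since linearity of the SDE guarantees both the explicit solvability and the Gaussian character. I would close by remarking that Gaussianity follows because the right-hand side is an affine image of $Y_0$ together with a Wiener integral of a deterministic integrand, hence jointly Gaussian whenever $Y_0$ is.
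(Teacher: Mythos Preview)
Your proposal is correct and follows essentially the same approach as the paper: both introduce the integrating factor $\Lambda_t=\exp\big(\int_0^t\sigma\tanh(\sigma(T-s))\,ds\big)=\cosh(\sigma T)/\cosh(\sigma(T-t))$, apply It\^o's formula to $\Lambda_t Y_t$, and evaluate the same two antiderivatives (of $\tanh$ and of $1/\cosh^2$) to recover \eqref{eq:Ytilde-resolu}. One small slip: the integral $\int_0^t\frac{ds}{\cosh^2(\sigma(T-s))}$ equals $-\frac{1}{\sigma}\big(\tanh(\sigma(T-t))-\tanh(\sigma T)\big)$, not $+\frac{1}{\sigma}(\cdots)$; this sign then cancels the $-\sigma c$ prefactor to give the correct middle term, so your final expression is right.
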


\begin{proof}
Because \eqref{SDE-Y} looks like an Ornstein-Uhlenbeck process, we define for all $t\le T$,
 \begin{equation}\label{eq:etape2}
 Z_{t}=e^{\int_{0}^{t}\sigma \tanh(\sigma(T-s))\ ds}Y_{t}.
 \end{equation}
Applying Itô's formula to \eqref{eq:etape2}, one obtains that:
 \begin{equation}\label{eq:etape4}
{Z}_{t}=  {Y}_{0} -\int_{0}^{t} \frac{\sigma c}{\cosh(\sigma(T-s))} e^{\int_{0}^{s}\sigma \tanh(\sigma(T-u))\ du} ds+\int_{0}^{t}\sigma e^{\int_{0}^{s}\sigma \tanh(\sigma(T-u))\ du} d B_{s}.
 \end{equation}

Since the primitive of $\tanh(x)$ is $\log \cosh (x)$, we obtain that:
\begin{equation}\label{eq:etape1}\int_{0}^{t}\sigma \tanh(\sigma(T-s))\ ds = \big[-\log \cosh(\sigma(T-s))\big]_0^t = \log \Big(\frac{\cosh (\sigma T)}{\cosh (\sigma(T-t))}\Big).
\end{equation}
Thus, it is possible to rewrite \eqref{eq:etape2} as
\begin{equation}\label{eq:etape3}
{Z}_{t} = \frac{\cosh (\sigma T)}{\cosh (\sigma(T-t))} {Y}_t,
\end{equation}and we obtain moreover from \eqref{eq:etape4} that
\begin{align}
 {Z}_{t}=  &  {Y}_0 - \int_0^t \frac{\sigma c \cosh(\sigma T)}{\cosh^2(\sigma(T-s))}ds +\int_{0}^{t}\frac{\sigma  \cosh(\sigma T)}{\cosh(\sigma(T-s))} dB_{s} \nonumber\\
  = &  {Y}_0 + c \cosh(\sigma T) \Big[\tanh(\sigma(T-s))\Big]_0^t + \sigma \cosh(\sigma T)\int_{0}^{t}\frac{dB_s}{\cosh(\sigma(T-s))} , \label{eq:etape5}
 \end{align}by using that a primitive of $1/\cosh^2(x)$ is $\tanh(x)$. Equations \eqref{eq:etape3} and \eqref{eq:etape5} give the announced result.
\end{proof}

\subsection{Return to the initial population process}

The spinal process $Y$ obtained in Theorems \ref{spine} and with generator given in \eqref{eq:generatorG-explicite} is associated to the auxiliary branching-diffusion process $\widetilde Z^K$. We have now to prove that it is close to its analogous for the initial population process $Z^K$. By Corollary \ref{proximity}, we know that when we start from the stationary measure, these two processes are uniformly (in time) close when $K$ is large, at least on a finite time interval. From this fact, we can obtain a similar result for $Z^K$ as the one enounced for $\widetilde{Z}^K$ in Corollary \ref{cor:hist-spine}.

 \begin{prop}\label{prop:hist}
When $(Z^K_0)$ satisfies \eqref{hyp:moment} and converges weakly and in probability to the stationary measure $F$ defined in \eqref{Fgauss}, then for any $\Phi$ continuous and bounded,
\begin{eqnarray}
\lim_{K\rightarrow +\infty} \E_{ Z^K_0}\left[\frac{\langle H^K_T,\Phi\rangle}{\langle H^K_T,1\rangle}\right]
&=&\lim_{K\to +\infty} \mathbb{E}\left[\frac{1}{\widetilde{N}^K_{T}}\sum_{i\in \widetilde{V}^K_{T}}\Phi(\widetilde X^i_{s},\ s\leq T) \ \bigg|\ \sum_{i\in \widetilde V^K_{0}} \delta_{\widetilde{X}^i_{0}}= K Z^K_0\right] \nonumber \\
&=& \int_{\mathbb{R}} m_{T}(x) \mathbb{E}_{x}\left[\Phi(Y_{s},\ s\leq T)\right] {F(dx)\over \lambda}.\label{eq:hist}\end{eqnarray}
\end{prop}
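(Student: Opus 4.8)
The plan is to treat the two equalities separately: the second is essentially a specialization of Corollary \ref{cor:hist-spine}, while the first is where the nonlinearity must be removed by the historical coupling.

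First I would dispose of the second equality, which relates the middle conditional expectation to the right-hand side. Since $\widetilde N^K_T = K\langle \widetilde Z^K_T,1\rangle = K\langle \widetilde H^K_T,1\rangle$ and $\sum_{i\in \widetilde V^K_T}\Phi(\widetilde X^i_s,\ s\le T) = K\langle \widetilde H^K_T,\Phi\rangle$, the conditional expectation in the middle is exactly $\E_{Z^K_0}\big[\langle \widetilde H^K_T,\Phi\rangle / \langle \widetilde H^K_T,1\rangle\big]$, whose limit is computed in Corollary \ref{cor:hist-spine}. Specializing the initial measure there to $\xi_0 = F$ gives the limit $\int_\R m_T(x)\,\E_x[\Phi(Y_s,\ s\le T)]\,F(dx) \big/ \int_\R m_T(x)\,F(dx)$, and the denominator equals $\lambda$ by the identity \eqref{mF}; this is exactly the announced right-hand side.

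The substance of the proposition is the first equality, i.e. that the \emph{nonlinear} historical process $H^K$ produces the same limiting spinal law as the linear auxiliary process $\widetilde H^K$. The plan is to show that the ratios $\langle H^K_T,\Phi\rangle/\langle H^K_T,1\rangle$ and $\langle \widetilde H^K_T,\Phi\rangle/\langle \widetilde H^K_T,1\rangle$ have the same $\mathbb{L}^1$ limit. The two ingredients are: the coupling of Proposition \ref{prop:coupligHHtilde}, whose proof in fact controls $\E(\sup_{t\le T}\|H^K_t - \widetilde H^K_t\|_{TV})\to 0$, so that both $\langle H^K_T,\Phi\rangle - \langle \widetilde H^K_T,\Phi\rangle$ and $\langle H^K_T,1\rangle - \langle \widetilde H^K_T,1\rangle$ tend to $0$ in $\mathbb{L}^1$ for any bounded $\Phi$; and the facts $\langle H^K_T,1\rangle = \langle Z^K_T,1\rangle \to \lambda$ and $\langle \widetilde H^K_T,1\rangle = \langle \widetilde Z^K_T,1\rangle \to \lambda$ in $\mathbb{L}^2$, from Corollary \ref{initialisation-F} and \eqref{CV-L2tilde}, with $\lambda>0$. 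Writing $A = \langle H^K_T,\Phi\rangle$, $B = \langle H^K_T,1\rangle$ and $\widetilde A,\widetilde B$ the analogues for $\widetilde H^K$ (with the convention $0/0 = 0$ on the extinction event), I would use the identity
\[
\frac{A}{B} - \frac{\widetilde A}{\widetilde B} = \frac{A(\widetilde B - B)}{B\,\widetilde B} + \frac{A - \widetilde A}{\widetilde B}.
\]
On the good event $\{B \ge \lambda/2\}\cap\{\widetilde B \ge \lambda/2\}$, using $|A|\le \|\Phi\|_\infty B$ one bounds the difference by $\tfrac{2}{\lambda}|A - \widetilde A| + \tfrac{2\|\Phi\|_\infty}{\lambda}|\widetilde B - B|$, whose expectation vanishes by the coupling; on the complementary event each ratio is bounded by $\|\Phi\|_\infty$ and the probability tends to $0$ because $B,\widetilde B \to \lambda > 0$ in probability, so this contribution also vanishes. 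Combining the two regimes gives the first equality.

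The main obstacle is exactly that the quantity of interest is a \emph{ratio}, whereas the coupling only controls numerators and denominators separately; one must therefore secure the denominators away from $0$ uniformly in $K$ (with probability tending to one) and handle the extinction event $\{B=0\}$ by the $0/0$ convention. Once the denominator is controlled, the good-event splitting above is routine, and the remaining estimates are immediate consequences of the results already established in Sections \ref{sec:PDE} and \ref{sec:Feynman-Kac}.
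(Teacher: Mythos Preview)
Your proposal is correct and follows essentially the same route as the paper: identify the second equality via Corollary~\ref{cor:hist-spine} with $\xi_0=F$ and \eqref{mF}, then compare the two ratios by controlling numerators and denominators separately through the historical coupling and securing the denominators near $\lambda$ on a high-probability event. The only cosmetic differences are that the paper first approximates $\Phi$ by a cylindrical function (because Proposition~\ref{prop:coupligHHtilde} is \emph{stated} only for such test functions, even though its proof yields the total-variation control you invoke directly), and that the paper splits the ratio difference into three pieces via the intermediate constant $1/\langle\xi_T,1\rangle$ rather than your two-term algebraic identity; neither changes the substance of the argument.
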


\begin{proof}{The right expression is obtained from Corollary 4.3 with $\xi_{0}=F$ and by using \eqref{mF}}. Let us consider $\varepsilon>0$. It is possible to find a cylindrical test-function $\varphi$ of the form \eqref{test-function} such that:
\begin{equation}\label{approx:function-test-cyl}
\sup_{y\in \Co(\R,\R)} \big|\Phi(y)-\varphi(y)\big|\leq \varepsilon.
\end{equation}
By \eqref{eq:spine-processuslinearise}, the right hand side of \eqref{eq:hist} is the limit when $K$ tends to infinity of
\begin{equation}
\label{etape3}
\mathbb{E}\left[\frac{1}{\widetilde{N}^K_{T}}\sum_{u\in \widetilde{V}^K_{T}}\Phi(\widetilde X^u_{s},\ s\leq T) \ \bigg|\ \sum_{i\in \widetilde V^K_{0}} \delta_{\widetilde{X}^i_{0}}= K Z^K_0 \right]
=\mathbb{E}_{Z^K_0}\left[\frac{\langle \widetilde{H}^K_T,\Phi\rangle}{\langle \widetilde{H}^K_T,1\rangle} \right].\end{equation}
To prove the proposition, it is hence sufficient to prove that the left hand side of \eqref{eq:hist} and \eqref{etape3} have the same limit. For this, we write:
\begin{multline}
\Big| \frac{\langle H^K_T,\Phi\rangle}{\langle H^K_T,1\rangle} - \frac{\langle \widetilde{H}^K_T,\Phi\rangle}{\langle \widetilde{H}^K_T,1\rangle}\Big| \leq  \Big| \frac{\langle H^K_T,\Phi\rangle}{\langle H^K_T,1\rangle} - \frac{\langle H^K_T,\varphi\rangle}{\langle H^K_T,1\rangle}\Big|
+  \Big|\frac{\langle H^K_T,\varphi\rangle}{\langle H^K_T,1\rangle}-\frac{\langle \widetilde{H}^K_T,\varphi\rangle}{\langle \widetilde{H}^K_T,1\rangle}\Big|\\
+\Big|\frac{\langle \widetilde{H}^K_T,\varphi\rangle}{\langle \widetilde{H}^K_T,1\rangle}-\frac{\langle \widetilde{H}^K_T,\Phi\rangle}{\langle \widetilde{H}^K_T,1\rangle}\Big|.\label{etape4}
\end{multline}
Notice that each of the fraction is upper-bounded by $\|\Phi\|_\infty$ or $\|\varphi\|_\infty$ (with the convention $0/0=0$) so that each of the terms in the right hand side is bounded. For the first term on the right hand side of \eqref{etape4}, we have by \eqref{approx:function-test-cyl}:
\begin{align*}
\E_{Z^K_0}\left[\Big| \frac{\langle H^K_T,\Phi\rangle}{\langle H^K_T,1\rangle} - \frac{\langle H^K_T,\varphi\rangle}{\langle H^K_T,1\rangle}\Big|\right]\leq &
\E_{Z^K_0}\left[\frac{1}{\langle H^K_T,1\rangle} \big| \langle H^K_T,\Phi-\varphi\rangle\big|\right]\nonumber\\
\leq & \|\Phi-\varphi\|_\infty \leq \varepsilon.
\end{align*}
Proceeding similarly, we can show that the third term is also upper bounded by $\varepsilon$. For the second term, let us first introduce the following stopping times, for ${1>}\eta>0$:
\[\tau^K_\eta=\inf\big\{t\in \R_+,\ \langle H^K_t,1\rangle \notin (\eta,1/\eta)\big\},\qquad
\widetilde{\tau}^K_\eta=\inf\big\{t\in \R_+,\ \langle \widetilde{H}^K_t,1\rangle \notin (\eta,1/\eta)\big\}.\]
Because the processes $(\langle H^K_t,1\rangle)_{t\in \R_+}=(\langle Z^K_t,1\rangle)_{t\in \R_+}$ and $(\langle \widetilde{H}^K_t,1\rangle)_{t\in \R_+}$ converge {to $\lambda$ (see Proposition \ref{prop:coupligHHtilde},  Corollary \ref{initialisation-F} and Equation \eqref{mF})}, we have that for $\eta$ small enough:
\[\lim_{K\rightarrow +\infty}\P(\tau^K_\eta\leq T)=\lim_{K\rightarrow +\infty}\P(\widetilde{\tau}^K_\eta\leq T)=0.\]
Thus, it is possible to choose $\eta$ such that both probabilities are smaller than $\varepsilon$. Then,
\begin{align*}
\Big| \frac{\langle H^K_T,\varphi\rangle}{\langle H^K_T,1\rangle}-\frac{\langle \widetilde{H}^K_T,\varphi\rangle}{\langle \widetilde{H}^K_T,1\rangle}\Big| \leq &
\big|\langle \widetilde{H}^K_T,\varphi\rangle \big| \times \Big|\frac{1}{\langle \widetilde{H}^K_t,1\rangle }- \frac{\ind_{\tau^K_\eta>T}\ind_{\widetilde{\tau}^K_\eta>T}}{\langle \xi_T,1\rangle }\Big|\\
+ &  \Big|\frac{\ind_{\tau^K_\eta>T}\ind_{\widetilde{\tau}^K_\eta>T}}{\langle \xi_T,1\rangle }\Big| \times \big|\langle \widetilde{H}^K_T,\varphi\rangle - \langle H^K_T,\varphi\rangle \Big|\\
+ & \big|\langle H^K_T,\varphi\rangle \big| \times \Big|\frac{1}{\langle {H}^K_t,1\rangle }- \frac{\ind_{\tau^K_\eta>T}\ind_{\widetilde{\tau}^K_\eta>T}}{\langle \xi_T,1\rangle }\Big|.
\end{align*}
For the first term in the right hand side, we use that:
\begin{equation*}
\big|\langle \widetilde{H}^K_T,\varphi\rangle \big| \times \Big|\frac{1}{\langle \widetilde{H}^K_t,1\rangle }- \frac{\ind_{\tau^K_\eta>T}\ind_{\widetilde{\tau}^K_\eta>T}}{\langle \xi_T,1\rangle }\Big|\leq  \ind_{\tau^K_\eta \wedge \widetilde{\tau}^K_\eta\leq T}
  \|\varphi\|_\infty + \frac{1}{\eta^2 \langle \xi_T,1\rangle}\Big|\langle H^K_T,1\rangle - \langle \xi_T,1\rangle\Big|, \end{equation*}
and taking the expectation:
\begin{align*}
\E_{Z^K_0}\Big[\big|\langle \widetilde{H}^K_T,\varphi\rangle \big| \times \Big|\frac{1}{\langle \widetilde{H}^K_t,1\rangle }- \frac{\ind_{\tau^K_\eta>T}\ind_{\widetilde{\tau}^K_\eta>T}}{\langle \xi_T,1\rangle }\Big|\Big]\leq 2 \|\varphi\|_\infty \varepsilon + \frac{1}{\eta^2 \langle \xi_T,1\rangle}\E\Big[\big|\langle \widetilde{H}^K_T,1\rangle - \langle \xi_T,1\rangle\big|\Big].
\end{align*}A similar upper-bound can be obtained for the third term. Gathering the latter bounds:
\begin{multline*}
\E_{Z^K_0}\Big[\Big| \frac{\langle H^K_T,\varphi\rangle}{\langle H^K_T,1\rangle}-\frac{\langle \widetilde{H}^K_T,\varphi\rangle}{\langle \widetilde{H}^K_T,1\rangle}\Big| \Big]\leq  4 \|\varphi\|_\infty \varepsilon + \frac{1}{\eta^2 \langle \xi_T,1\rangle}\E\Big[\big|\langle \widetilde{H}^K_T,1\rangle - \langle \xi_T,1\rangle\big|+\big|\langle H^K_T,1\rangle - \langle \xi_T,1\rangle\big|\Big]\\
+\frac{1}{\langle \xi_T,1\rangle}
\E\Big[\big|\langle \widetilde{H}^K_T,\varphi\rangle - \langle H^K_T,\varphi\rangle\big|\Big].
\end{multline*}
We can now conclude with Corollary \ref{initialisation-F} and Proposition \ref{prop:coupligHHtilde}.
\end{proof}

\begin{figure}[!ht]
\begin{center}
\begin{tabular}{ccc}
$t= 2 \Delta t$ & \includegraphics[width=5cm,height=3cm]{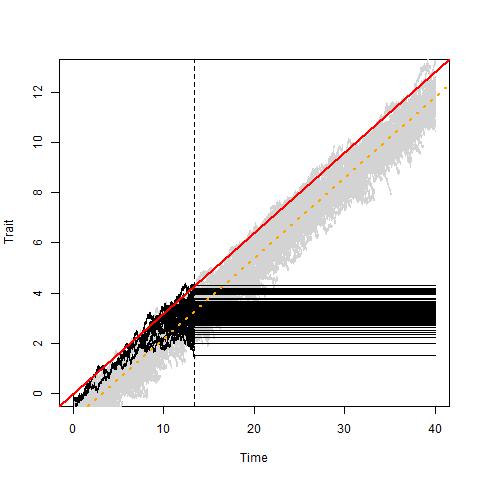} &
 \includegraphics[width=5cm,height=3cm]{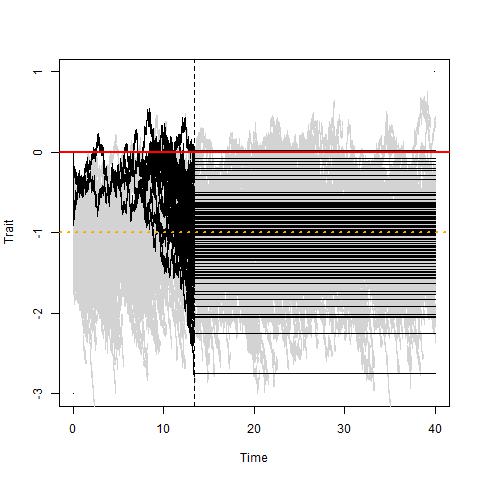} \\
$t= 3 \Delta t$ & \includegraphics[width=5cm,height=3cm]{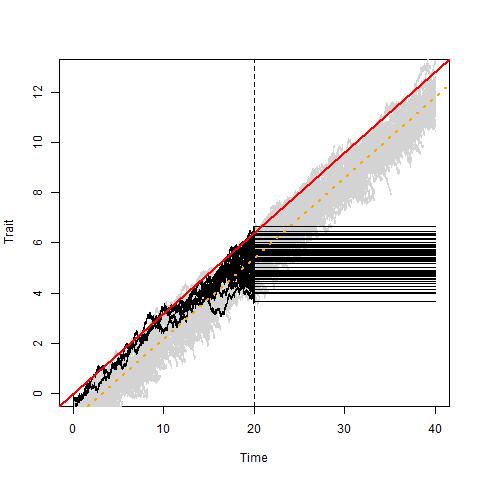} &
 \includegraphics[width=5cm,height=3cm]{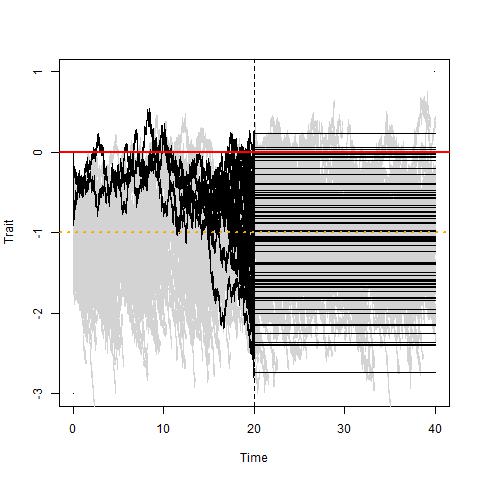} \\
$t= 4 \Delta t$ & \includegraphics[width=5cm,height=3cm]{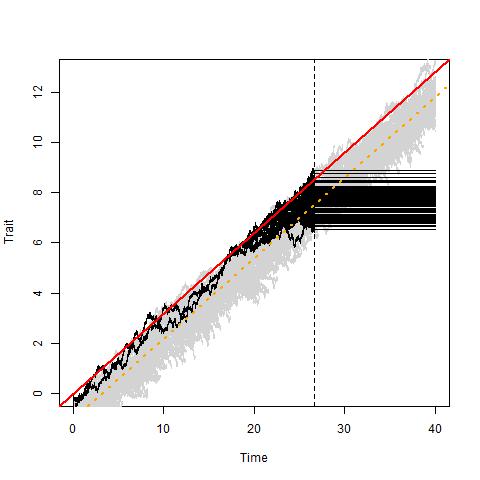} &
 \includegraphics[width=5cm,height=3cm]{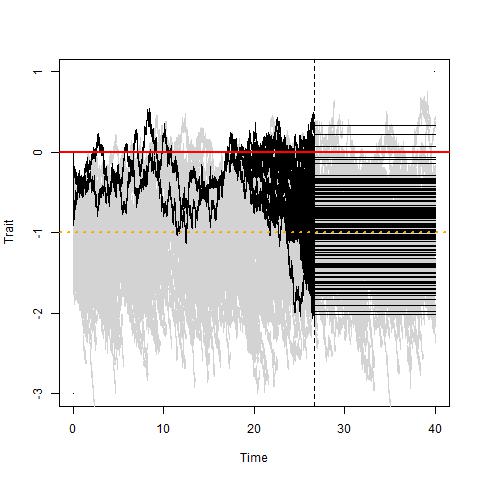} \\
$t= 5 \Delta t$ & \includegraphics[width=5cm,height=3cm]{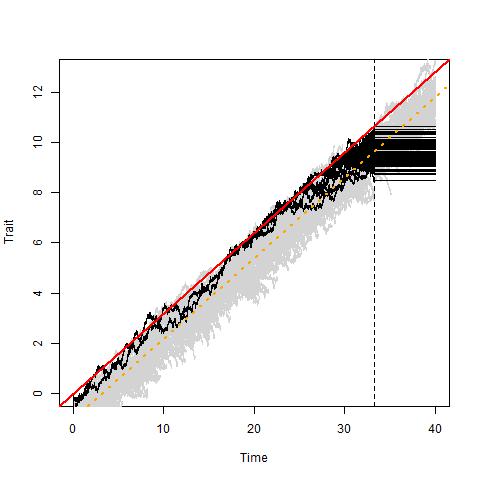} &
 \includegraphics[width=5cm,height=3cm]{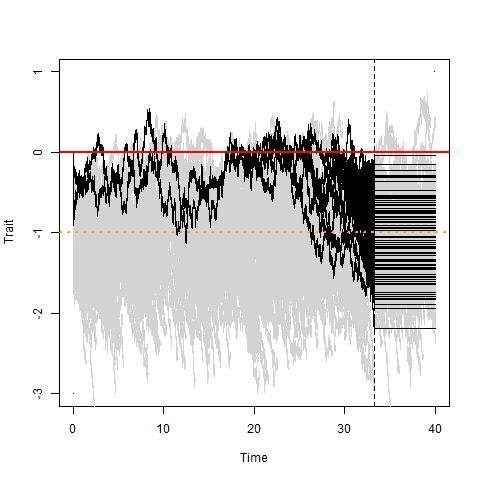} \\
$t= 6 \Delta t$ & \includegraphics[width=5cm,height=3cm]{Simu04122020-6.jpeg} &
 \includegraphics[width=5cm,height=3cm]{Simu04122020-mobile-6.jpeg} \\
& (a) & (b)
\end{tabular}
\caption{{\small Evolution of the ancestral lineages of the present population, for various times $t=k\Delta t$ with $\Delta t=20/3$ and $k\in \{2,\dots 6\}$. The traits in the population (ordinate) are shown with respect to time (abscissa). The extinct lineages are in gray, whereas the lineages of the living particles are in block. (a): fixed framework. (b): mobile framework.}}
\end{center}
\end{figure}

\subsection{The spinal time reversed equation}

Our purpose in this section is to recover the trait ancestor of an individual sampled in $F$ at time $T$, that is, in the population at time $T$ when the initial condition is the stationary solution $F$. For this, we need to reverse the time in the equation of the spinal process, and we will use to this purpose a result by Haussmann and Pardoux \cite{haussmannpardoux}. Their formula to reverse the diffusion \eqref{eq:Ytilde-resolu} requires the computation of the density of $Y_t$ for every time $t\geq 0$.

\me First, notice that:
 \begin{prop}The approximating {(for $K\rightarrow +\infty$)} distribution at time $0$ of a trait chosen uniformly in the population at time $T$ according to the stationary measure $F$ comes from a biased initial condition, $\frac{1}{\lambda}m_T(x)F(x)$ and not $\frac{1}{\lambda}F(x)$. Conditionally to $Y_T\leadsto F$,
 \be
 \label{Y0}
Y_{0}\sim \mathcal{N}\left(- c e^{-\sigma T},\ \frac{\sigma}{1+\tanh(\sigma T)}\right).
\ee
 \end{prop}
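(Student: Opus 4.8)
The plan is to read off the law of $Y_0$ directly from the biased initial condition produced by the spinal representation, and then to recognise the resulting density as Gaussian. By Proposition \ref{prop:hist} (equivalently Corollary \ref{cor:hist-spine} taken with $\xi_0=F$), the limiting law of the ancestral path of a uniformly sampled individual is that of the spinal process $Y$ of Proposition \ref{prop:Ytilde-resolu}, started from the initial distribution $\tfrac{1}{\lambda}m_T(x)F(x)\,dx$. The biasing factor $m_T(x)$ encodes that an ancestor with trait $x$ at time $0$ is sampled at time $T$ with probability proportional to its expected number of descendants, which is precisely why the initial law is a tilted version of $F$ rather than $F$ itself. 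Hence it suffices to compute this density explicitly.

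First I would substitute the Gaussian form $F(x)=\tfrac{\lambda}{\sqrt{2\pi\sigma}}\exp(-(x+c)^2/(2\sigma))$ from \eqref{Fgauss} and the closed expression for $m_T(x)$ from \eqref{def:m} into the product $\tfrac{1}{\lambda}m_T(x)F(x)$. The key observation is that the factor $\exp\!\big((x+c)^2/(2\sigma)\big)$ carried by $m_T$ cancels exactly against the Gaussian weight of $F$, and that the two prefactors $\lambda$ cancel as well, leaving
$$\frac{1}{\lambda}m_T(x)F(x)=\frac{\sqrt{1+\tanh(\sigma T)}}{\sqrt{2\pi\sigma}}\exp\left(-\frac{(x+ce^{-\sigma T})^2}{2\sigma}\big(1+\tanh(\sigma T)\big)\right).$$
This is immediately identified as the density of $\mathcal{N}\!\big(-ce^{-\sigma T},\,\sigma/(1+\tanh(\sigma T))\big)$: the argument of the exponential has mean $-ce^{-\sigma T}$ and the coefficient $(1+\tanh(\sigma T))/\sigma$ is the inverse of the claimed variance, while the prefactor $\sqrt{(1+\tanh(\sigma T))/(2\pi\sigma)}$ matches the corresponding normalising constant. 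This yields \eqref{Y0}.

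There is no genuine analytic obstacle here: once the explicit formulas \eqref{Fgauss} and \eqref{def:m} are available, the statement reduces to the single cancellation above. The only point requiring care is the conceptual one, namely justifying that the sampled-path law starts from the $m_T$-biased version of $F$ rather than from $F$ itself; this is exactly the content of Proposition \ref{prop:hist}, and it is why the mean $-ce^{-\sigma T}$ of $Y_0$ differs from the stationary mode $-c$. As a consistency check, supporting the notation $Y_T\leadsto F$, one may propagate this initial law through the explicit solution \eqref{eq:Ytilde-resolu}: a direct moment computation using $\int_0^T \cosh^{-2}(\sigma(T-s))\,ds=\tfrac{1}{\sigma}\tanh(\sigma T)$ and $1+\tanh(\sigma T)=e^{\sigma T}/\cosh(\sigma T)$ gives $\mathbb{E}[Y_T]=-c$ and $\mathrm{Var}(Y_T)=\sigma$, so that $Y_T$ is indeed distributed according to the normalised stationary profile $F/\lambda$.
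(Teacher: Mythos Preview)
Your proof is correct and follows essentially the same approach as the paper: apply \eqref{eq:hist} with a test functional depending only on $x_0$ to identify the law of $Y_0$ as $\tfrac{1}{\lambda}m_T(x)F(x)\,dx$, then substitute the explicit formulas \eqref{Fgauss} and \eqref{def:m} and read off the Gaussian parameters. You simply make the cancellation explicit where the paper writes ``Computing this measure yields\ldots'', and your added consistency check that $Y_T\sim F/\lambda$ is a nice bonus not present in the original.
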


\begin{proof}
Applying  \eqref{eq:hist}  with $\Phi(x_{s},\ s\leq T) = f(x_{0})$, we obtain that the random variable $Y_{0}$ has the distribution
 $\frac{1}{\lambda}m_T(x)F(x)dx$. Computing this measure yields that $Y_{0}$ has a Gaussian law whose expectation and variance are respectively $-ce^{-\sigma T}$ and $\sigma/(1+\tanh(\sigma T))$.
 \end{proof}

\me Remember that $F$ is a Gaussian distribution centered in $-c$. The biaised distribution $\frac{1}{\lambda} m_{T}F$ describes the traits at time $0$  of the individuals producing  individuals alive at $T$. When $T$ is large, its support is in the tail of the distribution $F$.

\bigskip

\me We are now able to compute  the density of  $Y_{t}$, using Proposition \ref{prop:Ytilde-resolu}.\\

\begin{prop}
For any $0\leq t\leq T$, the random variable $Y_{t}$ is a normal variable with law
\[
Y_{t}\sim \mathcal{N}\left(-ce^{-\sigma (T-t)},\ \frac{\sigma}{1+\tanh(\sigma(T-t))}\right)
\]
whose density $p(t,x)$ is given by
\[
\partial_{x}\log p(t,x)=-\frac{x+ce^{-\sigma(T-t)}}{\sigma}\left(1+\tanh(\sigma(T-t)) \right).
\]
\end{prop}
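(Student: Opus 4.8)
The plan is to read off the law of $Y_t$ directly from the explicit representation \eqref{eq:Ytilde-resolu} established in Proposition \ref{prop:Ytilde-resolu}. That identity expresses $Y_t$ as an affine function of $Y_0$ plus the stochastic integral $\sigma\cosh(\sigma(T-t))\int_0^t \cosh^{-1}(\sigma(T-s))\,dB_s$. Since the Brownian motion $B$ is independent of $Y_0$ (as recorded before \eqref{SDE-Y}) and $Y_0$ is Gaussian by \eqref{Y0}, the variable $Y_t$ is a sum of two independent Gaussian quantities, hence Gaussian. It then suffices to compute its mean and variance and to identify them with the announced values.

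For the mean, I would take expectations in \eqref{eq:Ytilde-resolu}: the stochastic integral is centred, so $\mathbb{E}[Y_t]=\frac{\cosh(\sigma(T-t))}{\cosh(\sigma T)}\mathbb{E}[Y_0]+c\cosh(\sigma(T-t))\big(\tanh(\sigma(T-t))-\tanh(\sigma T)\big)$, with $\mathbb{E}[Y_0]=-ce^{-\sigma T}$ from \eqref{Y0}. Factoring out $c\cosh(\sigma(T-t))$ and using $e^{-\sigma T}+\sinh(\sigma T)=\cosh(\sigma T)$ collapses the bracket to $\tanh(\sigma(T-t))-1$; the elementary identity $\cosh(u)\big(\tanh(u)-1\big)=-e^{-u}$ with $u=\sigma(T-t)$ then yields $\mathbb{E}[Y_t]=-ce^{-\sigma(T-t)}$.

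For the variance, independence of $Y_0$ and the integral gives $\mathrm{Var}(Y_t)=\frac{\cosh^2(\sigma(T-t))}{\cosh^2(\sigma T)}\mathrm{Var}(Y_0)+\sigma^2\cosh^2(\sigma(T-t))\int_0^t \cosh^{-2}(\sigma(T-s))\,ds$, the second term coming from the It\^o isometry. The key computation is $\int_0^t \cosh^{-2}(\sigma(T-s))\,ds=\frac{1}{\sigma}\big(\tanh(\sigma T)-\tanh(\sigma(T-t))\big)$, since $\tanh$ is a primitive of $\cosh^{-2}$. Inserting $\mathrm{Var}(Y_0)=\sigma/(1+\tanh(\sigma T))$ and simplifying with the identity $1+\tanh(u)=e^{u}/\cosh(u)$ (which makes the first summand equal to $\sigma\cosh^2(\sigma(T-t))\,e^{-\sigma T}/\cosh(\sigma T)$) reduces the whole expression to $\sigma\cosh^2(\sigma(T-t))\big(1-\tanh(\sigma(T-t))\big)$; one further use of the same identity, via $1-\tanh(u)=e^{-u}/\cosh(u)$, gives $\mathrm{Var}(Y_t)=\sigma/(1+\tanh(\sigma(T-t)))$, as claimed.

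Finally, writing the Gaussian density $p(t,x)$ with this mean and variance and differentiating $\log p$ in $x$ produces $\partial_x\log p(t,x)=-\big(x-\mathbb{E}[Y_t]\big)/\mathrm{Var}(Y_t)=-\frac{x+ce^{-\sigma(T-t)}}{\sigma}\big(1+\tanh(\sigma(T-t))\big)$, which is exactly the stated formula. I do not expect any conceptual obstacle here: once Gaussianity is granted the argument is pure linear–Gaussian algebra, and the only delicate point is the careful bookkeeping of the hyperbolic identities used to bring the mean and variance into the clean closed forms.
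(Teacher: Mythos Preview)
Your proposal is correct and follows essentially the same route as the paper's proof: both start from the explicit representation \eqref{eq:Ytilde-resolu} together with the Gaussian law \eqref{Y0} of $Y_0$, then compute the mean and variance of $Y_t$ by the same hyperbolic identities (in particular $e^{-u}/\cosh u = 1-\tanh u$ and the primitive $\int \cosh^{-2}=\tanh$). Your write-up is slightly more explicit in justifying Gaussianity via independence of $Y_0$ and the Wiener integral, and in spelling out the final $\partial_x\log p$ step, but the argument is the same.
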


\me \begin{proof}
From \eqref{eq:Ytilde-resolu} and \eqref{Y0}, we deduce that $Y_{t}$ has a normal law.

\begin{align*}
\mathbb{E}[Y_{t}]&=-ce^{-\s T}\frac{\cosh(\s(T-t))}{\cosh(\s T)}+c\cosh(\s(T-t))\left\{\tanh(\s (T-t))-\tanh(\s T) \right\}\\
&=c\cosh(\s (T-t))\left(-\frac{e^{-\s T}}{\cosh(\s T)}-\tanh(\s T)+\tanh(\s(T-t)) \right).
\end{align*}
Since
\begin{equation}
\label{eq:1}
\tanh(\s T)-1=\frac{e^{\s T}-e^{-\s T}-e^{\s T}-e^{-\s T}-}{e^{\s T}+e^{-\s T}}=-\frac{e^{-\sigma T}}{\cosh(\s T)},
\end{equation}
we have
\begin{align*}
\mathbb{E}[ Y_{t}]&=c\cosh(\s (T-t))\left(\tanh(\s(T-t))-1 \right)\\
&=-c\cosh(\s (T-t))\left(\frac{e^{-\s(T-t)}}{\cosh(\sigma (T-t))} \right)=-ce^{-\s(T-t)}.
\end{align*}

Additionally,
\begin{align*}
\text{var}({Y}_{t})&=\cosh(\s (T-t))^{2}\left({\frac{\s}{\cosh^{2}(\s T)\left(1+\tanh(\s T) \right)}}+{\s^{2}\int_{0}^{t}\frac{1}{\cosh^{2}(\s (T-s))}\ ds} \right)\\
&= \cosh(\s (T-t))^{2}\left(\frac{\s e^{-\s T}}{\cosh(\s T)}+ \sigma\left(\tanh(\sigma T)-\tanh(\s (T-t)) \right)\right)\\
&= \cosh(\s (T-t))^{2}\, \s(1-\tanh(\s(T-t)))\\
&=\sigma \cosh(\s(T-t))e^{-\s(T-t)}=\frac{\s}{1+\tanh(\s(T-t))}.
\end{align*}

The result follows.\end{proof}

We are now able to obtain the time reversed equation giving the trajectory leading from the trait of a ``typical'' individual living at time $T$ in the stationary distribution $F$, to its ancestor.

\begin{prop}
\label{reversed:2}
The time  reversed process of the spinal process $ Y$ is the  time homogeneous Ornstein-Uhlenbeck process driving the ancestral trajectories around $0$, satisfying the  equation
\begin{equation}\label{rev-spine:OU}
d\widehat{Y}_{s}=-\sigma \widehat{Y}_{s}ds +\sigma d W_{s},
\end{equation}
for a Brownian motion $W$.
\end{prop}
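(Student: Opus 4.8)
The plan is to apply the time-reversal theorem of Haussmann and Pardoux \cite{haussmannpardoux} directly to the explicit diffusion \eqref{SDE-Y} solved by the spine $Y$ on $[0,T]$. Recall that for a one-dimensional diffusion $dY_t = b(t,Y_t)\,dt + \sigma\, dB_t$ with \emph{constant} diffusion coefficient and with smooth, strictly positive marginal density $p(t,\cdot)$, the reversed process $\widehat{Y}_s := Y_{T-s}$ is again a diffusion, driven by some Brownian motion $W$, with the same diffusion coefficient $\sigma$ and drift
\begin{equation*}
\widehat{b}(s,x) = -\,b(T-s,x) + \sigma^2\,\partial_x \log p(T-s,x).
\end{equation*}
The statement then reduces entirely to computing $\widehat b$ from the two explicit ingredients already at hand: the drift of \eqref{SDE-Y} and the logarithmic derivative of the Gaussian density $p$ established in the preceding proposition.

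First I would substitute $t = T-s$ into both ingredients. The drift gives $b(T-s,x) = -\sigma\tanh(\sigma s)\,x - \sigma c/\cosh(\sigma s)$, and the density formula gives $\partial_x \log p(T-s,x) = -\sigma^{-1}(x + c\,e^{-\sigma s})(1+\tanh(\sigma s))$. Plugging both into the Haussmann--Pardoux drift yields
\begin{equation*}
\widehat{b}(s,x) = \sigma\tanh(\sigma s)\,x + \frac{\sigma c}{\cosh(\sigma s)} - \sigma\,(x + c\,e^{-\sigma s})\,(1+\tanh(\sigma s)).
\end{equation*}
The $x$-linear terms collapse to $-\sigma x$, since $\sigma\tanh(\sigma s) - \sigma\bigl(1+\tanh(\sigma s)\bigr) = -\sigma$. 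For the $x$-free terms I would use the elementary identity $1+\tanh(u) = e^{u}/\cosh(u)$, which gives $e^{-\sigma s}\bigl(1+\tanh(\sigma s)\bigr) = 1/\cosh(\sigma s)$, so that the two constant contributions $\pm\,\sigma c/\cosh(\sigma s)$ cancel exactly. Hence $\widehat b(s,x) = -\sigma x$ and $d\widehat{Y}_s = -\sigma\widehat{Y}_s\,ds + \sigma\,dW_s$, which is precisely \eqref{rev-spine:OU}. Notice that both the explicit time dependence and the entire $c$-dependence disappear in the cancellation, consistent with the observation that the stationary law of this process is a centered Gaussian.

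The only genuine point requiring care — and the main obstacle — is verifying that the hypotheses of \cite{haussmannpardoux} hold, rather than the algebra itself. Concretely, one must check that $p(t,\cdot)$ is smooth and strictly positive and that the requisite regularity and integrability conditions on $b$, $\sigma$ and $\partial_x(\sigma^2 p)/p$ hold on $[0,T]$; all of this is immediate here because $Y$ is an explicit Gaussian process (Proposition \ref{prop:Ytilde-resolu}) with everywhere-positive Gaussian marginals, and because both $b$ and $\partial_x\log p$ are affine in $x$ with bounded, smooth time-dependent coefficients on the compact interval $[0,T]$. A secondary, purely bookkeeping point is the time-reversal convention: the reversed process starts at $\widehat{Y}_0 = Y_T$ (the sampled trait at time $T$, distributed according to $F$) and ends at $\widehat{Y}_T = Y_0$ (the ancestral trait), so the Ornstein--Uhlenbeck dynamics indeed describe the lineage read backwards from the present to the ancestor, as claimed.
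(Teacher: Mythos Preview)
Your proposal is correct and follows essentially the same route as the paper's proof: apply the Haussmann--Pardoux reversal formula to \eqref{SDE-Y}, plug in the explicit drift and the logarithmic derivative of the Gaussian marginal, and observe that the $x$-linear terms collapse to $-\sigma x$ while the constant terms cancel via the identity $e^{-\sigma s}(1+\tanh(\sigma s))=1/\cosh(\sigma s)$ (the paper uses the equivalent form \eqref{eq:1}). Your additional paragraph checking the regularity hypotheses of \cite{haussmannpardoux} is a welcome clarification that the paper omits.
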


\begin{proof}
To reverse time in the equation \eqref{SDE-Y}, we apply  an explicit formula given in \cite{haussmannpardoux}. The reverse process will be a diffusion process with the same diffusion coefficient $\sigma
$ and with a new drift term
\[
b^{r}(t,x)=-b(T-t,x)+\sigma^{2}\partial_{x}\log p(T-t,x),
\]

where $p(t,.)$ is the density of $ Y_{t}$ and $b(t,x)$ is the drift term in \eqref{SDE-Y}:
\[
b(t,x)=-\sigma\tanh(\s(T-t))x-\frac{\s c}{\cosh\left(\s(T-t)\right)}.
\]

We obtain

\begin{align*}
b^{r}(t,x)= & \sigma\tanh(\s t)x+\frac{\s c}{\cosh\left(\s t \right)} {-\sigma x\left(\tanh(\s t) \right)-\sigma x-\sigma ce^{-\s t}\left(1+\tanh(\s t) \right)}\\
= & -\sigma x,
\end{align*}
by using \eqref{eq:1}. The reverse process is then a very simple time homogeneous Ornstein-Uhlenbeck process driving the ancestral trajectories around $0$, satisfying Equation \eqref{rev-spine:OU} for a Brownian motion $W$.
\end{proof}

As a consequence of Propositions \ref{prop:hist} and \ref{reversed:2}, we can now summarize our results in the following.

\begin{thm}
	\label{thm:conclusion}
Let $U_{K}$ be a random variable whose conditional distribution with respect to $H^{K}_{T}$ is uniform on $V^{K}_{T}$ and consider the processes $(\widehat{Y}^{K}_{s})_{0\leq s\leq T}$ defined by
\[
\widehat{Y}^{K}_{s}=X^{U_{K}}_{T-s}, \quad \forall s\in[0,T]^.
\]
Then, under the hypotheses of Proposition \ref{prop:hist}, the processes $\widehat{Y}^{K}$ converges, as $K$ goes to infinity, weakly to $\widehat{Y}$ in $\Co([0,T],\mathbb{R})$.
\end{thm}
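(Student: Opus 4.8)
The plan is to read the result off almost directly from Propositions \ref{prop:hist} and \ref{reversed:2}, since these two preceding propositions have already absorbed all the analytic difficulty (the coupling of $H^K$ with $\widetilde H^K$, the many-to-one identity, and the Haussmann--Pardoux time reversal). First I would unwind the definition of $U_K$. Conditionally on $H^K_T$, the variable $U_K$ is uniform on $V^K_T$, and the stopped paths $(X^i_{\cdot\wedge T})_{i\in V^K_T}$ are exactly the atoms of $K H^K_T$; hence for any bounded continuous $\Phi$ on $\mathcal C([0,T],\R)$,
\[
\E\big[\Phi(X^{U_K}_{\cdot\wedge T})\,\big|\,H^K_T\big]=\frac{1}{N^K_T}\sum_{i\in V^K_T}\Phi(X^i_{\cdot\wedge T})=\frac{\langle H^K_T,\Phi\rangle}{\langle H^K_T,1\rangle},
\]
the normalisation being consistent because $\langle H^K_T,1\rangle=N^K_T/K$ cancels the prefactor $1/K$ in the definition \eqref{def:HKt}. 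Taking expectations and invoking Proposition \ref{prop:hist} then yields
\[
\E\big[\Phi(X^{U_K}_{\cdot\wedge T})\big]=\E_{Z^K_0}\Big[\frac{\langle H^K_T,\Phi\rangle}{\langle H^K_T,1\rangle}\Big]\xrightarrow[K\to\infty]{}\int_\R m_T(x)\,\E_x\big[\Phi(Y_s,\ s\le T)\big]\,\frac{F(dx)}{\lambda}=\E\big[\Phi(Y_s,\ s\le T)\big],
\]
where $Y$ is the spinal process started from the biased law $\lambda^{-1}m_T(x)F(x)\,dx$.

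Because this convergence holds for \emph{every} bounded continuous functional $\Phi$ on the path space, it is by definition the weak convergence of the laws $\mathcal L(X^{U_K}_{\cdot\wedge T})$ towards $\mathcal L(Y)$ in $\mathcal C([0,T],\R)$; no separate tightness argument is required, since Proposition \ref{prop:hist} was established for all such $\Phi$ through the cylindrical approximation \eqref{approx:function-test-cyl}. The second step is the time reversal: the map $R\colon\mathcal C([0,T],\R)\to\mathcal C([0,T],\R)$, $R(y)(s)=y(T-s)$, is an isometry, in particular continuous, so the continuous mapping theorem gives $\widehat Y^K=R(X^{U_K}_{\cdot\wedge T})\Rightarrow R(Y)$ weakly in $\mathcal C([0,T],\R)$. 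Finally, Proposition \ref{reversed:2} identifies the law of $R(Y)$, the time reverse of the spinal process, as that of the homogeneous Ornstein--Uhlenbeck process $\widehat Y$ solving \eqref{rev-spine:OU}; hence $\widehat Y^K\Rightarrow\widehat Y$, which is the claim.

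I expect no genuine obstacle to remain, the real work having been done upstream; the points deserving care are mostly bookkeeping. One must view the paths consistently as elements of $\mathcal C([0,T],\R)$ after stopping at $T$, so that $R$ is well defined and continuous, and keep track of the biased initial distribution $\lambda^{-1}m_T F$ carried by $Y$. The one slightly more conceptual point is that the limit on the right-hand side of \eqref{eq:hist} must be recognised as a genuine law on path space, namely that of the spinal process $Y$, before the continuous mapping theorem can legitimately be applied to $R$; this is precisely what Proposition \ref{prop:hist} guarantees by being valid for arbitrary bounded continuous $\Phi$, which is why the whole statement reduces to a short assembly of the earlier results.
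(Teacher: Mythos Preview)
Your proposal is correct and follows exactly the approach indicated by the paper, which states the theorem as a direct consequence of Propositions \ref{prop:hist} and \ref{reversed:2} without giving a separate proof. You have simply spelled out the assembly that the paper leaves implicit: rewriting the law of the sampled lineage as $\langle H^K_T,\Phi\rangle/\langle H^K_T,1\rangle$, invoking Proposition \ref{prop:hist} for weak convergence of the forward path, and then applying the continuous mapping theorem to the time-reversal map together with Proposition \ref{reversed:2}.
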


{\footnotesize
\providecommand{\noopsort}[1]{}\providecommand{\noopsort}[1]{}\providecommand{\noopsort}[1]{}\providecommand{\noopsort}[1]{}

}

\appendix

\section{SDEs for the stochastic birth-death particle system and the historical particle system}
\label{app:sde-section}

\subsection{Pathwise representation of the population process}
\label{app:sde}
We recall here the pathwise representation of our mesure-valued processes, as solution of stochastic differential equations driven by inependent Poisson point measures and Brownian motions. We refer to
\cite{champagnatmeleard} and \cite{meleardtran_suphist} for more details.

To model the random occurrence of birth and death events, let us consider a Poisson point process $N(ds,di,d\theta)$ on $\R_+\times \mathcal{I}\times \R_+$, with intensity measure $ds\otimes n(di)\otimes d\theta$, where $n(di)$ is the counting measure on the set of labels $\mathcal{I}=\bigcup_{n\in\mathbb{N}}\N^n$.\\
We also introduce a family of independent standard Brownian motions $\big(B^i, i\in \mathcal{I}\big)$ indexed by $\mathcal{I}$ that will drive the particle motions.\\

The atoms of the Poisson point process determine birth and death events. These events modify the set of individuals alive, $V^K_t$. Between these events, the position of a particle alive, say $i$, is modelled by a drifted diffusion
\begin{equation}
dX^i_t=\sigma(dB^i_t-c\ dt).\label{eq:Xi}
\end{equation}
Let us consider a test function $f \in \Co_b^{1,2}(\R_+\times \R)$. We will use the notation $f_t(x)=f(t,x)$. Between two jump times, the set of living individuals is fixed and we can apply Itô's formula to the diffusion processes \eqref{eq:Xi} related to the individuals $i$ alive. At a jump time $\tau$, if we have a birth of individual $i$, a new offspring appears at the same position and the process increases from $f_\tau(X^i_\tau)$. If we have a death of individual $i$, the process decreases of $f_\tau(X^i_\tau)$. Then the measure-valued population process $Z^K$ acts on the test function $f$ as:

\begin{align}
\lefteqn{\langle Z^{K}_t,f_t\rangle =  \langle Z^{K}_0,f_0\rangle}\nonumber\\
  + & \frac{1}{K}\int_0^t \int_{\mathcal{I}}\int_{\R_+}\ind_{\{i\in V^K_{s-}\}} \Big[ f_{s}(X^i_{s})\ind_{\{\theta\leq 1\}} - f_{s}(X^i_{s}) \ind_{\{1<\theta\leq 1+\frac{(X^i_s)^2}{2}+\langle Z_{s-}^{K},1\rangle\}} \Big] N(ds, di, d\theta)\nonumber \\
+ & \frac{1}{K}\sum_{i\in \mathcal{I}}\int_0^t  \ind_{\{i\in V^K_s\}} \sigma \partial_x f_s(X^i_s) dB^i_s \nonumber\\
+ & \frac{1}{K} \sum_{i\in \mathcal{I}}\int_0^t \ind_{\{i\in V^K_s\}} \Big( \partial_s f_s(X^i_s) -c\sigma \partial_x f_s(X^i_s)+\frac{\sigma^2}{2} \partial^2_{xx}f_s(X^i_s) \Big) ds,\label{eq:sdeZ}
\end{align}
and where the set of living individuals is changing as follows.
\begin{itemize}
\item $V^K_0=\{1,\ldots,K\}$ and $|V^K_0|=K\langle Z^K_0,1\rangle$.
\item For each atom $(s,i,\theta)$ of $N$ such that $i\in V^K_{s_-}$ and $\theta\leq 1$, there is a new birth by individual $i$, and the label of the new offspring is $j=(i,k)$ where $k$ is the rank of the new individual among the daughters of $i$.
\item For each atom $(s,i,\theta)$ of $N$ such that $i\in V^K_{s_-}$ and $1<\theta\leq 1+(X^i_s)^2/2+|V^K_{s-}|/K$, there is a death and the label $i$ is removed from $V^K_{s_-}$.
\end{itemize}

Introducing the compensated martingale measure of the Poisson point measure, we obtain that
\begin{multline}
\langle Z^{K}_{t},f_t\rangle=\langle Z^{K}_0,f_0\rangle +M^{K,\varphi}_{t}
+\int_{0}^{t} \int_{\mathbb{R}}  \left\{ \left(1-\frac{1}{2}x^2-\langle Z^{K}_{s}, 1\rangle\right)f_s(x)\right.\\
\left.+\partial_sf_s(x)-\sigma c \partial_xf_s(x)+\frac{\sigma^2}{2}\partial^2_{xx}f_s(x)\right\} Z^{K}_{s}(dx)\, ds, \label{ZK} \end{multline}
where the process $\,M^{K,\varphi}\,$ is a square integrable martingale with quadratic variation process given by
\begin{equation}
\label{crochet}
\langle M^{K,\varphi} \rangle_{t} = {1\over K} \int_{0}^t \int_{\R} \Big\{  \Big(1+{x^2\over 2} + \langle Z^{c,K}_{s}, 1\rangle\Big)f_s^2(x) + {\sigma^2} (\partial_x f_s)^2(x) \Big\} Z^{K}_{s}(dx) ds.
\end{equation}

\subsection{Pathwise representation of the historical  population process}
\label{app:sde-hist}

\medskip Let us consider test functions $\varphi$ defined on $\mathbb{R}_{+}\times \cal{C}(\R_+,\R)$ with a similar form as in  \eqref{test-function}, i.e. for any $s,y\in \mathbb{R}_{+}\times \cal{C}(\R_+,\R)$,
$$\varphi(s,y)=\varphi_s(y) =  \prod_{j=1}^m g_{j}(s, y_{t_{j}}) ,$$
for $m\in \mathbb{N}^*$, $0\le t_{1}<\cdots <t_{m}$ and $\forall j \in \{1,\cdots,m\}$, $g_{j}\in C_{b}^{1,2}(\mathbb{R}_{+}\times\mathbb{R}, \mathbb{R})$. Note that
$$\varphi(s,y_{.\wedge s})=\prod_{j=1}^m g_{j}(s, y_{t_{j}\wedge s}) =  \sum_{k=0}^{m-1}\ind_{[t_k,t_{k+1})}(s)\Big(\prod_{j=1}^k g_j(s, y_{t_j}) \, \prod_{j=k+1}^m g_j(s, y_s)\Big).$$
It is possible to write a stochastic differential equation for the historical process $H^K$ defined in \eqref{def:HKt} that is driven by the same Poisson point measures and Brownian motion as the process $Z^K$. With the notation \eqref{derivee} and \eqref{laplacien} introduced in Section \ref{section:coupling_historique}, we have

\begin{align}
&\langle H^{K}_t,\varphi_t\rangle =  \langle H^{K}, \varphi_0\rangle+ \frac{1}{K}\sum_{i\in \mathcal{I}}\int_0^t  \ind_{\{i\in V^K_s\}} \sigma {\widetilde{D}}\varphi_s(X^i_s) dB^i_s \nonumber\\
  + & \frac{1}{K}\int_0^t \int_{\mathcal{I}}\int_{\R_+}\ind_{\{i\in V^K_{s-}\}} \Big[ \varphi_{s}(X^i_{(.\wedge s)})\ind_{\{\theta\leq 1\}} -\varphi_{s}(X^i_{(.\wedge s)}) \ind_{\{1<\theta\leq 1+\frac{(X^i_{s})^2}{2}+\langle H_{s-}^{K},1\rangle\}} \Big] N(ds, di, d\theta)\nonumber \\
+ & \frac{1}{K} \int_0^t \sum_{i\in V^K_s}\Big( \partial_s \varphi_s(X^i_{.\wedge s}) -c\sigma {\widetilde{D}} \varphi_s(X^i_{(.\wedge s)})+\frac{\sigma^2}{2} {\widetilde{\Delta}}\varphi_s(X^i_{(.\wedge s)}) \Big) ds.\label{eq:sdeprev}
\end{align}

Then introducing the compensated martingales measures associated with the Poisson point processes, we obtain that

\begin{align}
& \langle H^{K}_t,\varphi_t\rangle =  \langle H^{K}, \varphi_0,\rangle+  \frac{1}{K} \int_0^t  \sum_{i\in V^K_{s}} \sigma {\widetilde{D}} \varphi_s(X^i_{(.\wedge s)}) dB^i_s \nonumber\\
  + & \frac{1}{K}\int_0^t \int_{\mathcal{I}}\int_{\R_+}\ind_{\{i\in V^K_{s-}\}} \Big[ \varphi_{s}(X^i_{(.\wedge s)})\ind_{\{\theta\leq 1\}} -\varphi_{s}(X^i_{(.\wedge s)}) \ind_{\{1<\theta\leq 1+\frac{(X^i_s)^2}{2}+\langle H_{s-}^{K},1\rangle\}} \Big] \tilde N(ds, di, d\theta)\nonumber \\
+ & \frac{1}{K}\int_0^t \sum_{i\in V^K_{s}} \Big(1 - \frac{(X^i_s)^2}{2}+\langle H_{s-}^{K},1\rangle \Big) \varphi_{s}(X^i_{(.\wedge s)}) ds\nonumber \\
+ & \frac{1}{K} \int_0^t \sum_{i\in V^K_s}\Big( \partial_s \varphi_s(X^i_{(.\wedge s)}) -c\sigma {\widetilde{D}}\varphi_s(X^i_{(.\wedge s)})+\frac{\sigma^2}{2} {\widetilde{\Delta}}\varphi_s(X^i_{(.\wedge s)}) \Big) ds\nonumber\\
& \hskip 2cm =  \langle H^{K}, \varphi_0,\rangle+ \mathcal{ M}^K_t(\varphi) \nonumber\\
& +
 \int_0^t \int_{\Co(\R_+,\R)} \Big( \big( 1- \frac{y_s^2}{2} - \langle H^K_s,1\rangle\big)\varphi(s,y)  +  \partial_s \varphi_s(y) +
\frac{\sigma^2}{2} \widetilde{\Delta}\varphi(s,y)-\sigma c \widetilde{D}\varphi(s,y) \Big)H^K_s(dy)\, ds.\label{eq:sde}
\end{align}

The process is a square integrable local martingale with quadratic variation
\begin{align}
&\langle \mathcal{ M}^K(\varphi) \rangle_{t} =  \frac{1}{K}  \int_0^t  \int_{\Co(\R_+,\R)} \Big( \big( 1+ \frac{y_s^2}{2} + \langle H^K_s,1\rangle\big)\varphi(s,y)  +  \sigma^2 ({\widetilde{D}}\varphi(s,y) )^2
 \Big)H^K_s(dy)\, ds.\label{eq:mart}
\end{align}

\subsection{Stochastic mild equation}
\label{app:sde-mild}

\label{app:stoch-mild} Recall that $(P_t)_{t\geq 0}$ is the semi-group defined in \eqref{eq:semi-gp}. For a fixed $t >0$ and a test function $\varphi\in \Co_b^2(\R)$, choosing $f(s,x)=P_{t-s}\varphi(x)$, we obtain from \eqref{eq:sdeZ} a mild stochastic equation:
\begin{align}
\lefteqn{\langle Z^{K}_t,\varphi\rangle =  \langle Z^{K}_0,P_t\varphi\rangle}\nonumber\\
  + & \frac{1}{K}\int_0^t \int_{\mathcal{I}}\int_{\R_+}\ind_{\{i\in V^K_{s_-}\}} \Big[ P_{t-s}\varphi(X^i_s)\ind_{\{\theta\leq 1\}} - P_{t-s}\varphi(X^i_s) \ind_{\{1<\theta\leq 1+\frac{(X^i_s)^2}{2}+\langle Z_{s_-}^{K},1\rangle\}} \Big] N(ds, di, d\theta)\nonumber \\
+ & \frac{1}{K}\sum_{i\in \mathcal{I}}\int_0^t  \ind_{\{i\in V^K_s\}} \sigma \partial_x P_{t-s}\varphi(X^i_s) dB^i_s\nonumber\\
= & \langle Z^{K}_0,P_t\varphi\rangle+ \int_0^t \int_{\R} \big(1-\frac{x^2}{2}-\langle Z^{K},1\rangle\big) P_{t-s}\varphi(x) Z^{K}_s(dx)\ ds + \mathcal{M}_t^{K,\varphi}\label{eq:sde-mild}
\end{align}where $\mathcal{M}_t^{K,\varphi}$ is the following square integrable martingale:
\begin{align}
\mathcal{M}_t^{K,\varphi}= & \frac{1}{K}\int_0^t \int_{\mathcal{I}}\int_{\R_+}\ind_{\{i\in V^K_{s_-}\}} \Big[ P_{t-s}\varphi(X^i_s)\ind_{\{\theta\leq 1\}} \nonumber\\
 & - P_{t-s}\varphi(X^i_s) \ind_{\{1<\theta\leq 1+\frac{(X^i_s)^2}{2}+\langle Z_{s_-}^{K},1\rangle\}} \Big] \big(N(ds, di, d\theta)-ds\otimes n(di)\otimes d\theta \big)\nonumber\\
+ & \frac{1}{K}\sum_{i\in \mathcal{I}}\int_0^t  \ind_{\{i\in V^K_s\}} \sigma \partial_x P_{t-s}\varphi(X^i_s) dB^i_s.\label{app:martingale-mild}
\end{align}The predictable quadratic variation of $\mathcal{M}_t^{K,\varphi}$ is
\begin{align}
\langle \mathcal{M}^{K,\varphi}\rangle_t=&  \frac{1}{K} \int_0^t \big(1+\frac{x^2}{2}+\langle Z_s^{K},1\rangle\big) (P_{t-s}\varphi(x))^2 \ Z^{K}_s(dx)\ ds\nonumber\\
+ & \frac{1}{K} \int_0^t \sigma^2 \big\langle Z^{K}_s, \big(\partial_x P_{t-s}\varphi \big)^2 \big\rangle ds.\label{app:crochet-mild}
\end{align}

\section{Moment estimates for $Z^{K}$: proof of Lemma \ref{lem:propagation-moment}}\label{app:moment}

We  prove a more precise form of Lemma \ref{lem:propagation-moment}.

 \begin{lem}\label{lem:propagation-moment2} We assume that the initial condition $Z^{K}_{0}$ satisfies for $\epsilon>0$ that:
\begin{equation}\label{hyp:momentB}
\sup_{K\in \N^*}\E\big(\langle Z^K_0,1\rangle^{2+\epsilon} \big)<+\infty\qquad \mbox{ and }\qquad \sup_{K\in \N^*} \E\big( \langle Z^K_0,x^2\rangle^{1+\epsilon}\big)<+\infty.
\end{equation}
Then,  for any $T>0$, we have
\begin{equation}\sup_{K\in \N^*} \E\big(\sup_{t\in [0,T]} \langle Z^K_t,1\rangle^{2+\epsilon}\big)<+\infty\qquad \mbox{ and }\qquad \sup_{K\in \N^*} \sup_{t\in [0,T]} \E\big( \langle Z^K_t,x^2\rangle^{1+\epsilon}\big)<+\infty.\label{eq:propagationB}\end{equation}
{Under the additional assumption that:
\begin{equation}\label{hyp:moment2B}
\sup_{K\in \N^*}\E\big(\langle Z^K_0,x^4\rangle^{1+2\epsilon}\big)<+\infty,\end{equation}
we also have that:
\begin{equation}
\sup_{K\in \N^*} \E\big( \sup_{t\in [0,T]}  \langle Z^K_t,x^2\rangle^{1+\epsilon}\big)<+\infty.\label{eq:propagation2B}
\end{equation}}
\end{lem}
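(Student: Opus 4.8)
The plan is to work directly from the pathwise semimartingale representation \eqref{eq:sdeZ}--\eqref{ZK} and the bracket formula \eqref{crochet}, applying Itô's formula for pure-jump-plus-diffusion processes to suitable powers of the observables $\langle Z^K_t,1\rangle$ and $\langle Z^K_t,x^2\rangle$. The guiding principle is that in each case the nonlinear death rate furnishes a sign-definite dominating term that caps the growth of the moment: the logistic term $-\langle Z^K_s,1\rangle^2$ beats the linear birth term for the mass, while the natural mortality contributes a term $-\tfrac12\langle Z^K_s,x^4\rangle$ that controls the second moment.

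First I would treat the mass. Writing $u^K_t=\langle Z^K_t,1\rangle$ and taking $f\equiv 1$ in \eqref{ZK}, the finite-variation part of $u^K_t$ has density $u^K_s-\tfrac12\langle Z^K_s,x^2\rangle-(u^K_s)^2\le u^K_s-(u^K_s)^2$. Applying the generator to $u\mapsto u^{2+\epsilon}$ (the quadratic jump corrections being of lower order, with prefactor $1/K$), the leading drift is bounded above by $(2+\epsilon)\big((u^K_s)^{2+\epsilon}-(u^K_s)^{3+\epsilon}\big)$, a function bounded above uniformly in $u^K_s\ge 0$. Hence $\frac{d}{dt}\E[(u^K_t)^{2+\epsilon}]\le C$, and Gronwall together with \eqref{hyp:momentB} yields $\sup_K\sup_{t\le T}\E[(u^K_t)^{2+\epsilon}]<\infty$. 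To pass the supremum inside the expectation I would use the decomposition \eqref{ZK}, bound the positive part of the drift by the quantity just controlled, and apply Doob's inequality to the martingale, whose bracket \eqref{crochet} carries the prefactor $1/K$ and is dominated by moments already bounded.

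Next I would treat $v^K_t=\langle Z^K_t,x^2\rangle$ with $f(x)=x^2$, $f'=2x$, $f''=2$. The drift density is $v^K_s-\tfrac12\langle Z^K_s,x^4\rangle-u^K_s v^K_s-2\sigma c\langle Z^K_s,x\rangle+\sigma^2 u^K_s$. Applying the generator to $v\mapsto v^{1+\epsilon}$, the crucial negative contribution is $-\tfrac{1+\epsilon}{2}(v^K_s)^{\epsilon}\langle Z^K_s,x^4\rangle$, and Cauchy--Schwarz gives $\langle Z^K_s,x^4\rangle\ge (v^K_s)^2/u^K_s$, so this term is at most $-\tfrac{1+\epsilon}{2}(v^K_s)^{2+\epsilon}/u^K_s$; the cross term $u^K_s v^K_s$ and the linear term (via $|x|\le 1+x^2$) are absorbed by Young's inequality and the already-established mass bound. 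This produces a drift bounded above after taking expectations, and Gronwall gives $\sup_K\sup_{t\le T}\E[(v^K_t)^{1+\epsilon}]<\infty$, proving \eqref{eq:propagationB}.

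The delicate point, and the reason for the stronger hypothesis \eqref{hyp:moment2B}, is the supremum-in-time bound \eqref{eq:propagation2B}. Here I would again use \eqref{ZK} for $v^K_t$: the positive part of the drift involves only $x^2$-moments (the quartic term being negative, it helps for an upper bound) and is handled by Gronwall as above, while the supremum of the martingale part must be controlled through its jumps, which are of size $(X^i)^2/K$. The main obstacle is that any bound on $\sup_{t\le T}(v^K_t)^{1+\epsilon}$ brings in $x$-moments of $Z^K$ strictly higher than $x^2$: estimating the $(1+\epsilon)$-th moment of this purely discontinuous martingale by the integral of the $(1+\epsilon)$-th powers of its jumps against the intensity — valid since $1+\epsilon\le 2$ — produces terms up to $\langle Z^K_s,x^{4+2\epsilon}\rangle$, with a compensating factor $1/K^{\epsilon}$. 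This apparent upward cascade in the moment hierarchy is broken by first establishing, from \eqref{hyp:moment2B} and the same logistic/quartic mechanism applied now to $f=x^4$ (whose drift features the negative term $-\tfrac12\langle Z^K_s,x^6\rangle$, controlled below by $\langle Z^K_s,x^4\rangle^2/\langle Z^K_s,x^2\rangle$), a uniform-in-$K$ control of these higher $x$-moments of $Z^K$; the exponent $1+2\epsilon$ in \eqref{hyp:moment2B} is precisely what the Hölder bookkeeping requires for these integrals to be finite uniformly in $K$. Feeding this bound into Doob's inequality then closes \eqref{eq:propagation2B}.
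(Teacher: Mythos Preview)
Your overall strategy---It\^o's formula applied to powers of $\langle Z^K_t,1\rangle$ and $\langle Z^K_t,x^2\rangle$, Gronwall, and a martingale inequality for the supremum---matches the paper's. Two remarks.

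First, you invest effort in exploiting the sign-definite death contributions (the logistic $-\langle Z^K_s,1\rangle^2$ for the mass, the quartic $-\tfrac12\langle Z^K_s,x^4\rangle$ for the second moment) to produce self-limiting drifts. The paper takes the cruder route of simply \emph{discarding} all death jumps (they only decrease the process), keeping the raw Poisson integral over births, and closing by Gronwall with a bound that grows exponentially in $T$. This is coarser but shorter, and already suffices for \eqref{eq:propagationB}; your refinement is not needed, and in fact your quartic bound $-\tfrac12(v^K_s)^{2+\epsilon}/u^K_s$ does not by itself yield a drift bounded independently of $u^K_s$, so you end up relying on the mass estimate anyway.

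Second---and this is a genuine gap---your scheme for \eqref{eq:propagation2B} does not close as written. You propose to bound $\E\big[\sup_{t\le T}(v^K_t)^{1+\epsilon}\big]$ via a BDG-type inequality at exponent $1+\epsilon$ applied to the martingale part of $v^K_t=\langle Z^K_t,x^2\rangle$ (which, incidentally, is not purely discontinuous: there is a Brownian component). Because the per-particle death intensity carries the factor $x^2/2$, integrating the $(1+\epsilon)$-th powers of the jumps against the compensator produces, as you note, $\tfrac{1}{K^\epsilon}\int_0^T\langle Z^K_s,|x|^{4+2\epsilon}\rangle\,ds$. But $\langle Z^K_s,|x|^{4+2\epsilon}\rangle$ is a \emph{fractional moment of the measure}, whereas hypothesis \eqref{hyp:moment2B} controls a \emph{power of an integer moment}, $\E\big[\langle Z^K_0,x^4\rangle^{1+2\epsilon}\big]$. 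There is no inequality bounding the former by the latter, so the ``H\"older bookkeeping'' you invoke cannot close, and the factor $1/K^\epsilon$ does not help without an a priori bound on $\E\big[\langle Z^K_s,|x|^{4+2\epsilon}\rangle\big]$, which the hypotheses do not furnish. The paper avoids this mismatch by (i) discarding the negative death jumps at the outset, so that only birth jumps of rate $1$ remain, (ii) applying It\^o directly to $(v^K_t)^{1+\epsilon}$, and (iii) using Doob's $L^2$ inequality on the resulting martingale. Its bracket is then, up to constants,
\[
\frac{1}{K}\int_0^T\Big(\langle Z^K_s,x^2\rangle^{2\epsilon}\langle Z^K_s,x^4\rangle+\langle Z^K_s,x^2\rangle^{1+2\epsilon}\Big)\,ds,
\]
and Young's inequality reduces this to $\langle Z^K_s,x^4\rangle^{1+2\epsilon}$ and $\langle Z^K_s,x^2\rangle^{1+2\epsilon}$---precisely the exponents that \eqref{hyp:moment2B} is tailored to propagate. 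Your argument is repairable by the same device of dropping the death jumps before estimating the martingale (birth jumps alone contribute only $\langle Z^K_s,|x|^{2+2\epsilon}\rangle\le \langle Z^K_s,1\rangle+\langle Z^K_s,x^4\rangle$), but as stated it does not go through.
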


 Using classical computation (see e.g. \cite{fourniermeleard}), several moment estimates can be derived under Assumption \eqref{hyp:momentB}. Recall that $T>0$ and assume \eqref{hyp:momentB}, i.e. that the initial condition $Z^{K}_{0}$ satisfies for $\epsilon>0$ that:
\begin{equation*}
\sup_{K\geq 1}\E\big(\langle Z^K_0,1\rangle^{2+\epsilon} \big)<+\infty\qquad \mbox{ and }\qquad \sup_{K\in \N^*} \E\big( \langle Z^K_0,x^2\rangle^{1+\epsilon}\big)<+\infty.
\end{equation*}

\noindent \textbf{Step 1:} Let us introduce the stopping time, for $M>0$ and for $K\geq 1$:
\begin{equation}
\tau^K_M = \inf\big\{t\geq 0,\ \langle Z^{K}_t,1\rangle^{2+\epsilon}>M \quad \mbox{ or }\quad \langle Z^K_t,x^2\rangle^{1+\epsilon} > M\big\}.
\end{equation}

Choosing the test function $\varphi \equiv 1$ and neglecting the natural death term of rate $x^2/2$ gives in \eqref{ZK}:
\begin{equation*}
\langle Z^K_{t\wedge \tau^K_M},1\rangle \leq   \langle Z^K_0,1\rangle + \int_0^{t\wedge \tau^K_M} \big(\langle Z^K_s,1\rangle - \langle Z^K_s,1\rangle^2\big) ds+ M^{K,1}_{t\wedge \tau^K_M}.
\end{equation*}Taking the expectation and using the convexity of $x\mapsto x^2$, it follows that
\begin{align*}
\E\big(\langle Z^K_t,1\rangle\big)\leq & \E\big(\langle Z^K_0,1\rangle\big) + \int_0^{t} \Big[\E\big(\langle Z^K_{s\wedge \tau^K_M},1\rangle\big) - \E\big(\langle Z^K_s,1\rangle\big)^2\Big] ds\\
\leq & \frac{\E\big(\langle Z^K_0,1\rangle\big)}{\E\big(\langle Z^K_0,1\rangle\big)+\big(1-\E\big(\langle Z^K_0,1\rangle\big)\big) e^{-t}},
\end{align*}since we recognize the logistic equation. Because the upper-bound does not depend on $M$, a direct consequence is that $\tau^K_M$ tends a.s. to infinity when $M\rightarrow +\infty$ and that:
\begin{equation}
 \sup_{t\in \R_+} \E\big(\langle Z^K_t,1\rangle\big)<+\infty.\label{etape6}
\end{equation}

\noindent \textbf{Step 2:} Now, choosing the test function $\varphi(x)=1$, using Itô's formula (see e.g. \cite[p.66]{ikedawatanabe}) and neglecting the death terms:
\begin{align*}
\langle Z^{K}_{t \wedge \tau^K_M},1\rangle^{2+\epsilon} \leq & \langle Z^{K}_0,1\rangle^{2+\epsilon} + \int_0^{t \wedge \tau^K_M} \int_{\mathcal{I}}\int_{\R}\ind_{i\in V^K_{s_-}} \Big(\big(\langle Z^K_{s-},1\rangle + \frac{1}{K}\big)^{2+\epsilon} -\langle Z^K_{s-},1\rangle^{2+\epsilon}\Big) \ind_{\theta\leq 1} N(ds,di,d\theta)\\
\leq & \langle Z^{K}_0,1\rangle^{2+\epsilon} + \int_0^{t \wedge \tau^K_M} \int_{\mathcal{I}}\int_{\R}\ind_{i\in V^K_{s_-}} \frac{C}{K}\langle Z^K_{s-},1\rangle^{1+\epsilon}  \ind_{\theta\leq 1} N(ds,di,d\theta),\label{etape7}
\end{align*}for a constant $C>0$ and using that
\begin{equation}\label{etape8}
\big(x+\frac{1}{K}\big)^{2+\epsilon}-x^{2+\epsilon}=x^{2+\epsilon}\Big[\exp\big((2+\epsilon)\ln\big(1+\frac{1}{Kx}\big)\big)-1\Big]\stackrel{K\rightarrow +\infty}{\sim} x^{2+\epsilon} \times \frac{2+\epsilon}{xK}=\frac{({2}+\epsilon) x^{1+\epsilon}}{K}.\end{equation}
Introducing the supremum in the right hand side, then in the left hand side and taking the expectation provides that:
\begin{align*}
\E\Big(\sup_{s\leq t}\langle Z^{K}_{s \wedge \tau^K_M},1\rangle^{2+\epsilon}\Big)\leq & \E\big(\langle Z^{K}_0,1\rangle^{2+\epsilon}\big) + C \int_0^{t } \E\Big(\sup_{u\leq s}\langle Z^K_{u},1\rangle^{2+\epsilon}\Big)  ds,
\end{align*}from which we obtain by Gronwall's lemma that:
\begin{align*}
\E\Big(\sup_{s\leq t}\langle Z^{K}_{s \wedge \tau^K_M},1\rangle^{2+\epsilon}\Big)\leq & \E\big(\langle Z^{K}_0,1\rangle^{2+\epsilon}\big)\exp\big(C t\big),
\end{align*}where the upper bound does not depend on $M$ nor on $K$. Then, letting $M\rightarrow +\infty$ provides the first estimate of \eqref{eq:propagationB}.\\

Notice that a similar computation would have yielded that:
\begin{equation}\sup_{K\in \N^*}\E\Big(\sup_{s\leq t}\langle Z^{K}_{s},1\rangle^{1+\epsilon}\Big)< +\infty. \label{etape12}
\end{equation}

\noindent \textbf{Step 3:} Let us now consider the test function $\varphi(x)=x^2$. Using Itô's formula and neglecting the death terms, we obtain from \eqref{ZK}:
\begin{align}
\lefteqn{\langle Z^K_{t\wedge \tau^K_M},x^2\rangle^{1+\epsilon} \leq   \langle Z^K_0,x^2\rangle^{1+\epsilon}} \nonumber\\
+ &  \int_0^{t\wedge \tau^K_M} \int_{\mathcal{I}} \int_{\R_+}  \ind_{\{i\in V^K_{s-}\}}  \ind_{\{\theta\leq 1\}} \Big(\big(\langle Z^K_{s-},x^2\rangle + \frac{(X^i_{s})^2}{K}\big)^{1+\epsilon}-\langle Z^K_{s-},x^2\rangle^{1+\epsilon}\Big) N(ds,di,d\theta)\nonumber\\
+ & \frac{1}{K}  \sum_{i\in \mathcal{I}}\int_0^{t\wedge \tau^K_M} \ind_{\{i\in V^K_s\}}2 \sigma(1+\epsilon)\langle Z^K_s,x^2\rangle^\epsilon  X^i_s \ dB^i_s
+  \frac{1}{{K^2}} \sum_{i\in \mathcal{I}} \int_0^{t\wedge \tau^K_M} \ind_{\{i\in V^K_s\}} \frac{\epsilon(1+\epsilon)}{\langle Z^K_s,x^2\rangle^{1-\epsilon } }2\sigma^2 (X^i_s)^2 \ ds \nonumber\\
+ & \frac{1}{K} \sum_{i\in \mathcal{I}} \int_0^{t\wedge \tau^K_M} \ind_{\{i\in V^K_s\}} (1+\epsilon)\langle Z^K_s,x^2\rangle^\epsilon \Big(\sigma^2-2c\sigma X^i_s\Big) ds \nonumber
\end{align}
\begin{align}
\lefteqn{\langle Z^K_{t\wedge \tau^K_M},x^2\rangle^{1+\epsilon} \leq  \langle Z^K_0,x^2\rangle^{1+\epsilon}+  \frac{1}{K}  \sum_{i\in \mathcal{I}}\int_0^{t\wedge \tau^K_M} \ind_{\{i\in V^K_s\}}2 \sigma(1+\epsilon)\langle Z^K_s,x^2\rangle^\epsilon  X^i_s \ dB^i_s }\nonumber\\
+ &  \int_0^{t\wedge \tau^K_M} \int_{\mathcal{I}} \int_{\R_+}  \ind_{\{i\in V^K_{s-}\}}  \ind_{\{\theta\leq 1\}} \langle Z^K_{s-},x^2\rangle^{1+\epsilon} \Big(\big(1 + \frac{(X^i_{s})^2}{K \langle Z^K_{s-},x^2\rangle}\big)^{1+\epsilon}-1\Big) N(ds,di,d\theta) \nonumber\\
+ & \epsilon(1+\epsilon){2\over K}\sigma^2 \int_0^t \langle Z^K_{s\wedge \tau^K_M},x^2\rangle^{{\epsilon}} ds
+  (1+\epsilon) \int_0^{t\wedge \tau^K_M} \langle Z^K_s,x^2\rangle^\epsilon \times \big(\sigma^2\langle Z^K_s,1\rangle - 2c\sigma \langle Z^K_s,x\rangle\big) ds\label{etape9}
\end{align}
First, because $x\leq 1+x^2$, we have that:
\begin{align*}
 \big|\langle Z^k_s,x^2\rangle^\epsilon \times \big(\sigma^2\langle Z^K_s,1\rangle - 2c\sigma \langle Z^K_s,x\rangle\big) \big| \leq  &
 2c\sigma \langle Z^K_s,x^2\rangle^{1+\epsilon} + (\sigma^2+2c \sigma )\langle Z^K_s,x^2\rangle^\epsilon \langle Z^K_s,1\rangle\\
 \leq & (\sigma^2+ 4c\sigma) \langle Z^K_s,x^2\rangle^{1+\epsilon} + (\sigma^2+2c \sigma ) \langle Z^K_s,1\rangle^{1+\epsilon}.
 \end{align*}
Then, notice that a computation similar to \eqref{etape8} gives that for a constant $C>0$ sufficiently large,
\[
\langle Z^K_{s-},x^2\rangle^{1+\epsilon} \Big(\big(1 + \frac{(X^i_{s})^2}{K \langle Z^K_{s-},x^2\rangle}\big)^{1+\epsilon}-1\Big)
\leq \langle Z^K_{s-},x^2\rangle^\epsilon \frac{C(1+\epsilon)}{K} (X^i_{s})^2.
\]
Gathering these results in \eqref{etape9}:
\begin{align}
\langle Z^K_{t\wedge \tau^K_M},x^2\rangle^{1+\epsilon}\leq & \langle Z^K_0,x^2\rangle^{1+\epsilon} +
2\sigma^2\epsilon(1+\epsilon)t + (1+\epsilon)\big(C+ \sigma^2+ 4c\sigma\big) \int_0^t \langle Z^K_{s\wedge \tau^K_M},x^2\rangle^{1+\epsilon}  ds\nonumber\\
+  & (\sigma^2+2c\sigma)T \sup_{s\leq T}\langle Z^K_s,1\rangle^{1+\epsilon}+M^K_{t\wedge\tau^K_M}\label{etape11}
\end{align}where $(M^K_{t\wedge \tau^K_M})_{t\geq 0}$ is a square integrable martingale. Taking the expectation, using Gronwall's lemma and \eqref{etape12} implies that:
\begin{align}
 \sup_{t\in [0,T]} \E\big(\langle Z^K_{t\wedge \tau^K_M},x^2\rangle^{1+\epsilon}\big)\leq &  \sup_{K\in \N^*} \Big(\E\big(\langle Z^K_0,x^2\rangle^{1+\epsilon}\big)+2\sigma^2\epsilon(1+\epsilon) T+(\sigma^2+2c\sigma)T \sup_{s\leq T}\langle Z^K_s,1\rangle^{1+\epsilon}\Big)\times \nonumber\\
  & \times \exp\big( T(1+\epsilon)\big(C + \sigma^2+ 4c\sigma\big)\big).\label{etape15}
\end{align}Because the right hand side does not depend on $M$ for $K$, we obtain:
\begin{equation}
\sup_{K\in \N^*}\sup_{t\in [0,T]} \E\big(\langle Z^K_{t},x^2\rangle^{1+\epsilon}\big)<+\infty.\label{etape16}
\end{equation}
A similar computation yields that under the additional assumption \eqref{hyp:moment2B}, we also have:
\begin{equation}
\sup_{K\in \N^*}\sup_{t\in [0,T]} \E\big(\langle Z^K_{t},x^4\rangle^{1+\epsilon}\big)<+\infty.\label{etape17}
\end{equation}

\noindent \textbf{Step 4:} Now, let us take the supremum in \eqref{etape11}:
\begin{align}
\sup_{s\leq t}\langle Z^K_{s\wedge \tau^K_M},x^2\rangle^{1+\epsilon}\leq & \langle Z^K_0,x^2\rangle^{1+\epsilon} +2\sigma^2\epsilon(1+\epsilon) t +
 (1+\epsilon)\big(C+  \sigma^2+ 4c\sigma\big) \int_0^t \sup_{u\leq s} \langle Z^K_{u\wedge \tau^K_M},x^2\rangle^{1+\epsilon}  ds\nonumber\\
+ &  (\sigma^2+2c\sigma)T \sup_{s\leq T}\langle Z^K_s,1\rangle^{1+\epsilon}+ \sup_{s\leq t} M^K_{s\wedge\tau^K_M}\label{etape14}
\end{align}
The bracket of the martingale $(M^K_{t\wedge \tau^K_M})_{t\geq 0}$ is:
\begin{align}
\langle M^K\rangle_{t\wedge \tau^K_M} = & \frac{4\sigma^2(1+\epsilon)^2}{K}  \int_0^{t\wedge \tau^K_M} \langle Z^K_s,x^2\rangle^{1+2\epsilon} ds \nonumber\\
+ & \int_0^{t\wedge \tau^K_M} \sum_{i\in V^K_s} \Big(\big(\langle Z^K_{s-},x^2\rangle + \frac{(X^i_{s})^2}{K}\big)^{1+\epsilon}-\langle Z^K_{s-},x^2\rangle^{1+\epsilon}\Big)^2 ds\nonumber\\
\leq & \int_0^{t\wedge \tau^K_M} \Big( \frac{4\sigma^2(1+\epsilon)^2}{K}  \langle Z^K_s,x^2\rangle^{1+2\epsilon}+ \frac{C^2(1+\epsilon)^2}{K}\langle Z^K_s,x^2\rangle^{2\epsilon}\langle Z^K_s,x^4\rangle \Big)ds \nonumber\\
\leq &  \int_0^{t\wedge \tau^K_M} \Big( \big(\frac{4\sigma^2(1+\epsilon)^2}{K}+ \frac{C^2(1+\epsilon)^2}{K}\big) \langle Z^K_s,x^2\rangle^{1+2\epsilon}+ \frac{C^2(1+\epsilon)^2}{K}\langle Z^K_s,x^4\rangle^{1+2\epsilon} \Big)ds .
\end{align}
Thus, using Doob's lemma:
\begin{align}
\E\big(\sup_{s\leq t} M^K_s\big)\leq  & 4 \E\Big(\int_0^{t\wedge \tau^K_M} \Big( \big(\frac{4\sigma^2(1+\epsilon)^2}{K}+ \frac{C^2(1+\epsilon)^2}{K}\big) \langle Z^K_s,x^2\rangle^{1+2\epsilon}+ \frac{C^2(1+\epsilon)^2}{K}\langle Z^K_s,x^4\rangle^{1+2\epsilon} \Big)ds \Big)\nonumber\\
\leq & \frac{C(T)}{K},\label{etape18}
\end{align}by \eqref{etape16} and \eqref{etape17}. Now, taking the expectation in \eqref{etape14}, and using Gronwall's inequality with \eqref{etape12} and \eqref{etape18} yields that
\begin{equation}
\sup_{K\in \N^*} \E\big(\sup_{t\in [0,T]}\langle Z^K_{t},x^2\rangle^{1+\epsilon}\big)<+\infty.
\end{equation}

\section{Proof of Theorem \ref{spine}}

\begin{proof}
We denote here $h(x)=1-\frac{x^2}{2}-\lambda$. Notice that the proof here holds for any function $h$ that is upper bounded (but not necessarily lower bounded).\\

For $x\in \R$, $T>0$ and $t\leq T$, let us define the following measure for a test function $\Phi$ continuous and bounded on $C([0,T],\mathbb{R})$, where $Y$ is the diffusion process defined in \eqref{def:Y}:
\begin{equation}
\langle \mu_t^{T,x},\Phi\rangle=\frac{\E_x\Big(\exp\big(\int_0^T h(X_s)ds\big)\Phi(X_s,s\leq t)\Big)}{\E_x\Big(\exp\big(\int_0^T h(X_s)ds\big)\Big)}.
\end{equation}
Let us prove that under $\mu_t^{T,x}$, the canonical process is an inhomogeneous Markov process with infinitesimal generator \eqref{eq:generatorG}.\\

Denoting $\mathbb{E}^{\mu^{T,x}}$ the expectation under $\mu^{T,x}$, we have that, for some real numbers $t$ and $u$ s.t.\ $t\geq u\geq 0$,
	\begin{equation}
	\label{eq:condEx}
	\mathbb{E}^{\mu^{T,x}}\left[f(X_{t})\bigg| \mathcal{F}_{u} \right]=\frac{\mathbb{E}\left[f(X_{t})\exp\left(\int_{0}^{T} h(X_{s})ds \right)\bigg| \mathcal{F}_{u} \right]}{\mathbb{E}\left[\exp\left(\int_{0}^{T} h(X_{s})ds \right)\bigg| \mathcal{F}_{u}  \right]}.
	\end{equation}

Markov property for $X$ under $\mathbb{P}_{x}$ and Formula \eqref{tenan} entail that\begin{equation}
\label{trick}\mathbb{E}\left[\exp\left(\int_{0}^{T}h(X_{s})\ ds \right)\Bigg| \mathcal{F}_{u}\right] = m_{T-u}(X_{u}) \exp\left(\int_{0}^{u}h(X_{s})\ ds\right).
\end{equation}
We also have

\begin{align}
&\mathbb{E}\left[f(X_{t})\exp\left(\int_{0}^{T}h(X_{s})ds \right)\Bigg| \mathcal{F}_{u} \right]\nonumber\\
&= \mathbb{E}\left[f(X_{t})\exp\left(\int_{0}^{T}h(X_{s})ds \right)\Bigg| \mathcal{F}_{t} \Bigg| \mathcal{F}_{u} \right]\nonumber\\
&= \exp\left(\int_{0}^{u}h(X_{s})\ ds\right)\mathbb{E}\left[f(X_{t})\exp\left(\int_{u}^{t}h(X_{s})\ ds \right)m_{T-t}(X_{t})\Bigg| \mathcal{F}_{u} \right].\label{eq:markov}
\end{align}

\medskip

\noindent
Now, as $f$ and $m$ are smooth (cf. Corollary \ref{avril}) and $X$ is a Markov process with generator $L$ (see Lemma \ref{lem:MTO}), we have for any $0\le u\le t$,
\begin{equation}
\label{eq:thmeq2}
f(X_{t})m_{T-t}(X_{t})=f(X_{u})m_{T-u}(X_{u})+\int_{u}^{t} \big( L(fm_{T-s})(X_{s}) -f(X_{s})\partial_{t}m_{T-s}(X_{s})\big)\ ds+M_{t},
\end{equation}
where $M$ is some $\mathbb{P}$-martingale started at $0$. Thus, applying It\^o's  formula, we get
\begin{alignat*}{2}
f(X_{t})&\, m_{T-t}(X_{t})\exp\left(\int_{u}^{t}h(X_{s})ds \right) =f(X_{u})m_{T-u}(X_{u})\\&+\int_{u}^{t}\bigg(f(X_{s})\partial_{t}m_{T-s}(X_{s})+L(fm_{T-s})(X_{s})-f(X_{s})m_{T-s}(X_{s})h(X_{s})\bigg) \exp\left(\int_{u}^{s}h(X_{v})\ dv \right)\ ds\\
&+\int_{u}^{t}\exp\left(\int_{u}^{s}h(X_{v})\ dv \right) dM_{s}.
\end{alignat*}
Using \eqref{m-edp}  gives
\begin{alignat*}{2}
f(X_{t})&m_{T-t}(X_{t})\exp\left(\int_{u}^{t}h(X_{s})ds \right) =f(X_{u})m_{T-u}(X_{u})\\&+\int_{u}^{t}\bigg(L(fm_{T-s})(X_{s})-f(X_{s})Lm_{T-s}(X_{s})\bigg) \exp\left(\int_{u}^{s}h(X_{v})\ dv \right)\ ds\\
&+\int_{u}^{t}\exp\left(\int_{u}^{s}h(X_{v})\ dv \right) dM_{s}.
\end{alignat*}
Using  \eqref{trick} and notation  \eqref{eq:generatorG}, we finally obtain
\begin{align*}
\lefteqn{\mathbb{E}\left[f(X_{t})m_{T-t}(X_{t})\exp\left(\int_{0}^{t}h(X_{s})ds \right)\Bigg| \mathcal{F}_{u}\right] }\\
=&f(X_{u})m_{T-u}(X_{u})\exp\left(\int_{0}^{u}h(X_{s})\ ds \right)\\&+\mathbb{E}\left[\int_{u}^{t}\mathcal{G}_{s}f(X_{s})m_{T-s}(X_{s}) \exp\left(\int_{0}^{s}h(X_{v})\ dv \right)\ ds\Bigg| \mathcal{F}_{u}\right]\\
=&f(X_{u})\mathbb{E}\left[\exp\left(\int_{0}^{T}h(X_{s})\ ds \right)\Bigg| \mathcal{F}_{u}  \right] \\&+\mathbb{E}\left[\int_{u}^{t}\mathcal{G}_{s}f(X_{s}) \mathbb{E}\left[\exp\left(\int_{0}^{T}h(X_{v})\ dv \right)\Bigg| \mathcal{F}_{s}\right]\ ds\Bigg| \mathcal{F}_{u}\right].
\end{align*}
Thus, using \eqref{eq:condEx} and \eqref{eq:markov}, we have
\begin{equation*}
\mathbb{E}^{\mu^{T,x}}\left[f(X_{t})\Bigg| \mathcal{F}_{u}\right] =f(X_{u})+\mathbb{E}^{\mu^{T,x}}\left[\int_{u}^{t}\mathcal{G}_{s}f(X_{s}) \Bigg| \mathcal{F}_{u}\right].
\end{equation*}
This ends the proof.
\end{proof}


\begin{thebibliography}{10}

\bibitem{alfaro_effect_2017}
M.~Alfaro, H.~Berestycki, and G.~Raoul.
\newblock The {Effect} of {Climate} {Shift} on a {Species} {Submitted} to
  {Dispersion}, {Evolution}, {Growth}, and {Nonlocal} {Competition}.
\newblock {\em SIAM Journal on Mathematical Analysis}, 49(1):562--596, Jan.
  2017.

\bibitem{anderson_evolutionary_2012}
J.~T. Anderson, A.~M. Panetta, and T.~Mitchell-Olds.
\newblock Evolutionary and {Ecological} {Responses} to {Anthropogenic}
  {Climate} {Change}: {Update} on {Anthropogenic} {Climate} {Change}.
\newblock {\em Plant Physiology}, 160(4):1728--1740, Dec. 2012.
\newblock Publisher: American Society of Plant Biologists Section: UPDATES -
  FOCUS ISSUE.

\bibitem{bansaye_limit_2011}
V.~Bansaye, J.-F. Delmas, L.~Marsalle, and V.~C. Tran.
\newblock Limit theorems for {Markov} processes indexed by continuous time
  {Galton}–{Watson} trees.
\newblock {\em The Annals of Applied Probability}, 21(6):2263--2314, Dec. 2011.
\newblock Publisher: Institute of Mathematical Statistics.

\bibitem{beerenwinkel_genetic_2007}
N.~Beerenwinkel, T.~Antal, D.~Dingli, A.~Traulsen, K.~W. Kinzler, V.~E.
  Velculescu, B.~Vogelstein, and M.~A. Nowak.
\newblock Genetic {Progression} and the {Waiting} {Time} to {Cancer}.
\newblock {\em PLOS Computational Biology}, 3(11):e225, Nov. 2007.
\newblock Publisher: Public Library of Science.

\bibitem{blancasbenitezguflerkliemtranwakolbinger}
A.~B. Ben\'itez, S.~Gufler, S.~Kliem, V.~Tran, and A.~Wakolbinger.
\newblock Evolving genealogies for branching populations under selection and
  competition.
\newblock in preparation, 2020.

\bibitem{berestycki_can_2009}
H.~Berestycki, O.~Diekmann, C.~J. Nagelkerke, and P.~A. Zegeling.
\newblock Can a {Species} {Keep} {Pace} with a {Shifting} {Climate}?
\newblock {\em Bulletin of Mathematical Biology}, 71(2):399--429, Feb. 2009.

\bibitem{berestyckiberestyckischweinsberg}
J.~Berestycki, N.~Berestycki, and J.~Schweinsberg.
\newblock The genealogy of branching {B}rownian motion with absorption.
\newblock {\em Annals of Probability}, 41(2):527--618, 2013.

\bibitem{berestycki_recent_nodate}
N.~Berestycki.
\newblock Recent progress in coalescent theory.
\newblock page 193.

\bibitem{billingsley_convergence_1999}
P.~Billingsley.
\newblock {\em Convergence of {Probability} {Measures}}.
\newblock Wiley {Series} in {Probability} and {Statistics}. John Wiley \& Sons,
  Inc., Hoboken, NJ, USA, July 1999.

\bibitem{BraHol06}
W.~E. Bradshaw and C.~M. Holzapfel.
\newblock Evolutionary response to rapid climate change.
\newblock {\em Science}, 312(5779):1477--1478, 2006.

\bibitem{brunet_genealogies_2013}
E.~Brunet and B.~Derrida.
\newblock Genealogies in simple models of evolution.
\newblock {\em Journal of Statistical Mechanics: Theory and Experiment},
  2013(01):P01006, Jan. 2013.
\newblock Publisher: IOP Publishing.

\bibitem{brunet_effect_2007}
E.~Brunet, B.~Derrida, A.~H. Mueller, and S.~Munier.
\newblock Effect of selection on ancestry: an exactly soluble case and its
  phenomenological generalization.
\newblock {\em Physical Review E}, 76(4):041104, Oct. 2007.
\newblock arXiv: 0704.3389.

\bibitem{BurLyn95}
R.~Burger and M.~Lynch.
\newblock Evolution and {E}xtinction in a {C}hanging {E}nvironment: {A}
  {Q}uantitative-{G}enetic {A}nalysis.
\newblock {\em Evolution}, 49(1):151--163, 1995.

\bibitem{burrows_pace_2011}
M.~T. Burrows, D.~S. Schoeman, L.~B. Buckley, P.~Moore, E.~S. Poloczanska,
  K.~M. Brander, C.~Brown, J.~F. Bruno, C.~M. Duarte, B.~S. Halpern,
  J.~Holding, C.~V. Kappel, W.~Kiessling, M.~I. O'Connor, J.~M. Pandolfi,
  C.~Parmesan, F.~B. Schwing, W.~J. Sydeman, and A.~J. Richardson.
\newblock The {Pace} of {Shifting} {Climate} in {Marine} and {Terrestrial}
  {Ecosystems}.
\newblock {\em Science}, 334(6056):652--655, Nov. 2011.

\bibitem{champagnat_microscopic_2006}
N.~Champagnat.
\newblock A microscopic interpretation for adaptive dynamics trait substitution
  sequence models.
\newblock {\em Stochastic Processes and their Applications}, 116(8):1127--1160,
  Aug. 2006.

\bibitem{champagnatferrieremeleard_stochmodels}
N.~Champagnat, R.~Ferri\`{e}re, and S.~M\'{e}l\'{e}ard.
\newblock From individual stochastic processes to macroscopic models in
  adaptive dynamics.
\newblock {\em Stochastic Models}, 24:2--44, 2008.

\bibitem{champagnatmeleard}
N.~Champagnat and S.~M\'{e}l\'{e}ard.
\newblock Invasion and adaptive evolution for individual-based spatially
  structured populations.
\newblock {\em Journal of Mathematical Biology}, 55:147--188, 2007.

\bibitem{cloez}
B.~Cloez.
\newblock Limit theorems for some branching measure-valued processes.
\newblock {\em Advances in Applied Probability}, 49(2):549--580, June 2017.

\bibitem{cloezgabriel}
B.~Cloez and P.~Gabriel.
\newblock On an irreducibility type condition for the ergodicity of
  nonconservative semigroups.
\newblock {\em Comptes Rendus. Mathématique}, 358(6):733--742, 2020.

\bibitem{Col18}
D.~Collot, T.~Nidelet, J.~Ramsayer, O.~C. Martin, S.~M{\'e}l{\'e}ard,
  C.~Dillmann, D.~Sicard, and J.~Legrand.
\newblock Feedback between environment and traits under selection in a seasonal
  environment: consequences for experimental evolution.
\newblock {\em Proceedings of the Royal Society of London B: Biological
  Sciences}, 285(1876), 2018.

\bibitem{dawsonperkinsAMS}
D.~Dawson and E.~Perkins.
\newblock {\em Historical Processes}, volume~93.
\newblock {A}merican {M}athematical {S}ociety, {M}emoirs of the {A}merican
  {M}athematical {S}ociety edition, 1991.

\bibitem{dawson}
D.~A. Dawson.
\newblock Mesure-valued {M}arkov processes.
\newblock In Springer, editor, {\em Ecole d'Et\'{e} de probabilit\'{e}s de
  Saint-Flour XXI}, volume 1541 of {\em Lectures Notes in Math.}, pages 1--260,
  New York, 1993.

\bibitem{desai_speed_2007}
M.~M. Desai, D.~S. Fisher, and A.~W. Murray.
\newblock The speed of evolution and maintenance of variation in asexual
  populations.
\newblock {\em Current biology: CB}, 17(5):385--394, Mar. 2007.

\bibitem{desai_genetic_2013}
M.~M. Desai, A.~M. Walczak, and D.~S. Fisher.
\newblock Genetic {Diversity} and the {Structure} of {Genealogies} in {Rapidly}
  {Adapting} {Populations}.
\newblock {\em Genetics}, 193(2):565--585, Feb. 2013.

\bibitem{donnellykurtz_96}
P.~Donnelly and T.~Kurtz.
\newblock A countable representation of the {F}leming-{V}iot measure-valued
  diffusion.
\newblock {\em Annals of Probability}, 24:698--742, 1996.

\bibitem{donnellykurtz_99}
P.~Donnelly and T.~Kurtz.
\newblock Particle representations for measure-valued population models.
\newblock {\em Annals of Probability}, 27(1):166--205, 1999.

\bibitem{durrett_traveling_2011}
R.~Durrett and J.~Mayberry.
\newblock Traveling waves of selective sweeps.
\newblock {\em The Annals of Applied Probability}, 21(2):699--744, Apr. 2011.
\newblock Publisher: Institute of Mathematical Statistics.

\bibitem{dynkin91}
E.~Dynkin.
\newblock Branching particle systems and superprocesses.
\newblock {\em Annals of Probability}, 19:1157--1194, 1991.

\bibitem{etheridgebook}
A.~Etheridge.
\newblock {\em An introduction to superprocesses}, volume~20 of {\em University
  Lecture Series}.
\newblock {A}merican {M}athematical {S}ociety, Providence, 2000.

\bibitem{etheridgekurtz}
A.~Etheridge and T.~Kurtz.
\newblock Genealogical constructions of population models.
\newblock {\em Annals of Probability}, 47(4):1827--1910, 2019.

\bibitem{feynman_space-time_1948}
R.~P. Feynman.
\newblock Space-{Time} {Approach} to {Non}-{Relativistic} {Quantum}
  {Mechanics}.
\newblock {\em Reviews of Modern Physics}, 20(2):367--387, Apr. 1948.
\newblock Publisher: American Physical Society.

\bibitem{fitzsimmons1993markovian}
P.~Fitzsimmons, J.~Pitman, and M.~Yor.
\newblock Markovian bridges: construction, palm interpretation, and splicing.
\newblock In {\em Seminar on Stochastic Processes, 1992}, pages 101--134.
  Springer, 1993.

\bibitem{ForienGarnierPatout}
R. Forien, J. Garnier, and F. Patout.
\newblock Ancestral lineages in mutation selection equilibria with moving optimum.
\newblock arXiv:2011.05192.

\bibitem{fourniermeleard}
N.~Fournier and S.~M\'{e}l\'{e}ard.
\newblock A microscopic probabilistic description of a locally regulated
  population and macroscopic approximations.
\newblock {\em Ann. Appl. Probab.}, 14(4):1880--1919, 2004.

\bibitem{legall}
J.-F.~L. Gall.
\newblock {\em Random trees and applications}.
\newblock Probability Surveys, 2006.

\bibitem{Gon13}
A.~Gonzalez, O.~Ronce, R.~Ferrière, and M.~Hochberg.
\newblock Evolutionary rescue: An emerging focus at the intersection between
  ecology and evolution.
\newblock 368:20120404, 2013.

\bibitem{GorAarZwa16}
F.~A. Gorter, M.~M.~G. Aarts, B.~J. Zwaan, and J.~A. G.~M. de~Visser.
\newblock Dynamics of adaptation in experimental yeast populations exposed to
  gradual and abrupt change in heavy metal concentration.
\newblock {\em Am. Nat.}, 187(1):110--119, 2016.

\bibitem{grevenpfaffelhuberwinter2009}
A.~Greven, P.~Pfaffelhuber, and A.~Winter.
\newblock Convergence in distribution of random metric measure spaces
  (lambda-coalescent measure trees).
\newblock {\em Probability Theory and Related Fields}, 145(1):285--322, 2009.

\bibitem{guzella_slower_2018}
T.~S. Guzella, S.~Dey, I.~M. Chelo, A.~Pino-Querido, V.~F. Pereira, S.~R.
  Proulx, and H.~Teotónio.
\newblock Slower environmental change hinders adaptation from standing genetic
  variation.
\newblock {\em PLOS Genetics}, 14(11):e1007731, Nov. 2018.
\newblock Publisher: Public Library of Science.

\bibitem{hardyharris2}
R.~Hardy and S.~Harris.
\newblock A new formulation of the spine approach to branching diffusions.
\newblock 2006.
\newblock preprint \verb"http://arxiv.org/abs/math.PR/0611054".

\bibitem{hardyharris3}
R.~Hardy and S.~Harris.
\newblock A spine approach to branching diffusions with applications to
  ${L}^p$-convergence of martingales.
\newblock In Springer, editor, {\em S\'{e}minaire de Probabilit\'{e}s}, volume
  XLII of {\em Lectures Notes in Math.}, pages 281--330, Berlin, 2009.

\bibitem{harrisroberts2011}
S.~Harris and M.~Roberts.
\newblock The many-to-few lemma and multiple spines.
\newblock 2017.

\bibitem{haussmannpardoux}
U.~Haussmann and E.~Pardoux.
\newblock Time reversal of diffusions.
\newblock {\em The Annals of Probability}, 14(4):1188--1205, 1986.

\bibitem{Hen08}
A.~P. Hendry, T.~J. Farrugia, and M.~T. Kinnison.
\newblock Human influences on rates of phenotypic change in wild animal
  populations.
\newblock {\em Molecular Ecology}, 17(1):20--29, 2008.

\bibitem{HofSgr11}
A.~A. Hoffmann and C.~M. Sgr\`{o}.
\newblock Climate change and evolutionary adaptation.
\newblock {\em Nature}, 470(7335):479--485, 2011.

\bibitem{ikedawatanabe}
N.~Ikeda and S.~Watanabe.
\newblock {\em Stochastic Differential Equations and Diffusion Processes},
  volume~24.
\newblock North-Holland Publishing Company, 1989.
\newblock Second Edition.

\bibitem{kac_distributions_1949}
M.~Kac.
\newblock On distributions of certain {Wiener} functionals.
\newblock {\em Transactions of the American Mathematical Society}, 65(1):1--13,
  1949.

\bibitem{kac_connections_1951}
M.~Kac.
\newblock On {Some} {Connections} between {Probability} {Theory} and
  {Differential} and {Integral} {Equations}.
\newblock {\em Proceedings of the Second Berkeley Symposium on Mathematical
  Statistics and Probability}, pages 189--215, Jan. 1951.
\newblock Publisher: University of California Press.

\bibitem{kessler_evolution_1997}
D.~A. Kessler, H.~Levine, D.~Ridgway, and L.~Tsimring.
\newblock Evolution on a smooth landscape.
\newblock {\em Journal of Statistical Physics}, 87(3-4):519--544, May 1997.

\bibitem{kliem}
S.~Kliem.
\newblock A compact containment result for nonlinear historical superprocess
  approximations for population models with trait-dependence.
\newblock {\em Electronic Journal of Probability}, 19(97):1--13, 2014.

\bibitem{kliemwinter}
S.~Kliem and A.~Winter.
\newblock Evolving phylogenies of trait-dependent branching with mutation and
  competition. part i: Existence.
\newblock {\em Stochastic Processes and their Applications},
  129(12):4837--4877, December 2019.

\bibitem{KopMat13}
M.~Kopp and S.~Matuszewski.
\newblock Rapid evolution of quantitative traits: theoretical perspectives.
\newblock {\em Evolutionary Applications}, 7(1):169--191, 2013.

\bibitem{kurtz1997conceptual}
T.~Kurtz, R.~Lyons, R.~Pemantle, and Y.~Peres.
\newblock A conceptual proof of the kesten-stigum theorem for multi-type
  branching processes.
\newblock In {\em Classical and modern branching processes}, pages 181--185.
  Springer, 1997.

\bibitem{LanSha96}
R.~Lande and S.~Shannon.
\newblock The role of genetic variation in adaptation and population
  persistence in a changing environment.
\newblock {\em Evolution}, 50(1):434--437, 1996.

\bibitem{lepersbilliardportemeleardtran}
C.~Lepers, S.~Billiard, M.~Porte, S.~M\'el\'eard, and V.~Tran.
\newblock Inference with selection, varying population size and evolving
  population structure: Application of abc to a forward-backward.
\newblock {\em Heredity}, 2020.
\newblock published online.

\bibitem{loarie_velocity_2009}
S.~R. Loarie, P.~B. Duffy, H.~Hamilton, G.~P. Asner, C.~B. Field, and D.~D.
  Ackerly.
\newblock The velocity of climate change.
\newblock {\em Nature}, 462(7276):1052--1055, Dec. 2009.

\bibitem{LynGabWoo91}
M.~Lynch, W.~Gabriel, and A.~M. Wood.
\newblock Adaptive and demographic responses of plankton populations to
  environmental change.
\newblock {\em Limnology and Oceanography}, 36:1301--1312, 1991.

\bibitem{LynLan93}
M.~Lynch and R.~Lande.
\newblock {\em Evolution and extinction in response to environmental change}.
\newblock Sinauer {A}ssoc. 1993.

\bibitem{lyons1995conceptual}
R.~Lyons, R.~Pemantle, and Y.~Peres.
\newblock Conceptual proofs of l log l criteria for mean behavior of branching
  processes.
\newblock {\em The Annals of Probability}, pages 1125--1138, 1995.

\bibitem{Marguet1}
A.~Marguet.
\newblock A law of large numbers for branching markov processes by the
  ergodicity of ancestral lineages.
\newblock {\em ESAIM: Probability and Statistics}, 23:638--661, 2019.

\bibitem{Marguet2}
A.~Marguet.
\newblock Uniform sampling in a structured branching population.
\newblock {\em Bernoulli}, 25(4A):2649--2695, 2019.

\bibitem{meleardtran_suphist}
S.~M\'el\'eard and V.~Tran.
\newblock Nonlinear historical superprocess approximations for population
  models with past dependence.
\newblock {\em Electronic Journal of Probability}, 17(47), 2012.

\bibitem{neher_genealogies_2013}
R.~A. Neher and O.~Hallatschek.
\newblock Genealogies of rapidly adapting populations.
\newblock {\em Proceedings of the National Academy of Sciences},
  110(2):437--442, Jan. 2013.

\bibitem{park_speed_2010}
S.-C. Park, D.~Simon, and J.~Krug.
\newblock The {Speed} of {Evolution} in {Large} {Asexual} {Populations}.
\newblock {\em Journal of Statistical Physics}, 138(1):381--410, Feb. 2010.

\bibitem{Par06}
C.~Parmesan.
\newblock Evolutionary and ecological responses to recent climate change.
\newblock {\em Annu. Rev. Ecol. Evol.}, 37(8):637--669, 2006.

\bibitem{perkinsAMS}
E.~Perkins.
\newblock {\em On the Martingale Problem for Interactive Measure-Valued
  Branching Diffusions}, volume 115(549).
\newblock {A}merican {M}athematical {S}ociety, {M}emoirs of the {A}merican
  {M}athematical {S}ociety edition, May 1995.

\bibitem{perkins}
E.~A. Perkins.
\newblock Dawson-{W}atanabe superprocesses and mesure-valued diffusions.
\newblock In Springer, editor, {\em Ecole d'Et\'{e} de probabilit\'{e}s de
  Saint-Flour}, volume 1781 of {\em Lectures Notes in Math.}, pages 125--329,
  New York, 1993.

\bibitem{potapov_climate_2004}
A.~B. Potapov and M.~A. Lewis.
\newblock Climate and competition: {The} effect of moving range boundaries on
  habitat invasibility.
\newblock {\em Bulletin of Mathematical Biology}, 66(5):975--1008, Sept. 2004.

\bibitem{Roquesetal}
L.~Roques, J.~Garnier, F.~Hamel, and E.~Klein.
\newblock Allee effect promotes diversity in traveling waves of colonization.
\newblock {\em PNAS}, 109(23):8828--8833, 2012.

\bibitem{rouzine_solitary_2003}
I.~M. Rouzine, J.~Wakeley, and J.~M. Coffin.
\newblock The solitary wave of asexual evolution.
\newblock {\em Proceedings of the National Academy of Sciences},
  100(2):587--592, Jan. 2003.
\newblock Publisher: National Academy of Sciences Section: Biological Sciences.

\bibitem{schweinsberg_rigorous_2017}
J.~Schweinsberg.
\newblock Rigorous results for a population model with selection {I}: evolution
  of the fitness distribution.
\newblock {\em Electronic Journal of Probability}, 22(none):1--94, Jan. 2017.
\newblock Publisher: Institute of Mathematical Statistics and Bernoulli
  Society.

\bibitem{schweinsberg_rigorous_2017-1}
J.~Schweinsberg.
\newblock Rigorous results for a population model with selection {II}:
  genealogy of the population.
\newblock {\em Electronic Journal of Probability}, 22(none):1--54, Jan. 2017.
\newblock Publisher: Institute of Mathematical Statistics and Bernoulli
  Society.

\bibitem{sheldon_climate_2019}
K.~S. Sheldon.
\newblock Climate {Change} in the {Tropics}: {Ecological} and {Evolutionary}
  {Responses} at {Low} {Latitudes}.
\newblock {\em Annual Review of Ecology, Evolution, and Systematics},
  50(1):303--333, Nov. 2019.

\bibitem{chihdr}
V.~Tran.
\newblock {\em Une ballade en for\^{e}ts al\'eatoires. Th\'eor\`emes limites
  pour des populations structur\'ees et leurs g\'en\'ealogies, \'etude
  probabiliste et statistique de mod\`eles {S}{I}{R} en \'epid\'emiologie,
  contributions \`a la g\'eom\'etrie al\'eatoire}.
\newblock Habilitation \`a diriger des recherches, Universit\'{e} de Lille 1,
  11 2014.
\newblock \verb"http://tel.archives-ouvertes.fr/tel-01087229".

\bibitem{tsimring_rna_1996}
L.~S. Tsimring, H.~Levine, and D.~A. Kessler.
\newblock {RNA} {Virus} {Evolution} via a {Fitness}-{Space} {Model}.
\newblock {\em Physical Review Letters}, 76(23):4440--4443, June 1996.
\newblock Publisher: American Physical Society.

\end{thebibliography}
\end{document}